\newtheorem{theorem}{Theorem}[section]
\newtheorem{corollary}[theorem]{Corollary}
\newtheorem{lemma}[theorem]{Lemma}
\newtheorem{proposition}[theorem]{Proposition}
\theoremstyle{definition}
\newtheorem{problem}[theorem]{Problem}
\newtheorem{remark}[theorem]{Remark}
\numberwithin{equation}{subsection}
\newtheorem*{ack}{Acknowledgement}
\newcommand{\Spe}{\operatorname{Spe}}
\newcommand{\Aut}{\operatorname{Aut}}
\newcommand{\Spec}{\operatorname{Spe}}
\newcommand{\Inn}{\operatorname{Inn}}
\newcommand{\Out}{\operatorname{Out}}
\newcommand{\R}{\operatorname{R}}
\newcommand{\Z}{\operatorname{Z}}
\newcommand{\C}{\operatorname{C}}
\newcommand{\U}{\operatorname{U}}
\begin{document}

\title{Automorphisms of odd Coxeter groups}

\author{Tushar Kanta Naik}
\address{Department of Mathematical Sciences, Indian Institute of Science Education and Research (IISER) Mohali, Sector 81,  S. A. S. Nagar, P. O. Manauli, Punjab 140306, India.}
\email{mathematics67@gmail.com / tushar@iisermohali.ac.in}

\author{Mahender Singh}
\address{Department of Mathematical Sciences, Indian Institute of Science Education and Research (IISER) Mohali, Sector 81,  S. A. S. Nagar, P. O. Manauli, Punjab 140306, India.}
\email{mahender@iisermohali.ac.in}

\subjclass[2010]{Primary 20F55; Secondary  20F28, 20F36}
\keywords{Automorphism, braid group, Coxeter group, Hopfian property, $R_{\infty}$-property, twin group}

\begin{abstract}
An odd Coxeter group $W$ is one which admits a Coxeter system $(W,S)$ for which all the exponents $m_{ij}$ are either odd or infinity. The paper  investigates the family of odd Coxeter groups whose associated labeled graphs $\mathcal{V}_{(W,S)}$ are trees. It is known that two Coxeter groups in this family are isomorphic if and only if they admit Coxeter systems having the same rank and the same multiset of finite exponents. In particular, each group in this family is isomorphic to a group that admits a Coxeter system whose associated labeled graph is a star shaped tree. We give the complete description of the automorphism group of this group, and derive a sufficient condition for the splitting of the automorphism group as a semi-direct product of the inner and the outer automorphism groups. As applications, we prove that Coxeter groups in this family satisfy the $R_\infty$-property and are (co)-Hopfian. We compare structural properties, automorphism groups,  $\R_\infty$-property and (co)-Hopfianity of a special odd Coxeter group whose only finite exponent is three with the braid group and the twin group.
\end{abstract}

\maketitle

\section{Introduction}
A group $W$  is called a {\it Coxeter group} if it admits a  presentation of the form $\big \langle S \mid R \big \rangle$, where $S= \{ w_i~|~ i \in \Pi\}$ is the set of generators and 
\begin{equation}\label{Coxeter presentation}
R  = \big \{ (w_iw_j)^{m_{ij}}~ \mid m_{ij}\in \mathbb{N} \cup \{\infty\},~m_{ij}=m_{ji}, ~m_{ij} = 1 \Leftrightarrow i =j~\big \}
\end{equation}
is the set of defining relations. Such a generating set $S$ is called a Coxeter generating set for $W$, and the pair $(W, S)$ is called a {\it Coxeter system}. We refer to the
powers $m_{ij}$, $i \neq j$,  as the {\em exponents} of the Coxeter system $(W, S)$. The cardinality of the set $S$ is called the {\it rank} of the Coxeter system $(W,S)$. We consider only Coxeter systems of finite rank. In general, the Coxeter system $(W,S)$ is not uniquely determined by the group $W$.  For example,  the dihedral group $D_{2(2n)}$ of order $4n$, where $n\ge 3$ is an odd integer, has different Coxeter presentations as
$$\big\langle r, s \mid r^2 = s^2 = (rs)^{2n}=1 \big\rangle \cong D_{4n} \cong \big\langle x, y, z \mid x^2=y^2=z^2 = (xy)^2 = (xz)^2 = (yz)^n=1 \big\rangle.$$
\par

Graphs are indispensable tools in the study of Coxeter groups and there are several ways to associate graphs to a Coxeter system. Given a Coxeter system $(W,S)$ let $\Gamma_{(W, S)}$ be the edge-labeled graph whose vertex set is the index set $\Pi$ of elements of $S$ and there is an edge between the vertices $i$ and $j$ whenever the exponent $m_{ij}\geq 3$. Further, the edge between $i$ and $j$ is labelled by $m_{ij}$ whenever $m_{ij}\geq 4$. The Coxeter system $(W,S)$ is said to be { \it irreducible} if the graph $\Gamma_{(W, S)}$ is connected.  For each subset $I\subseteq S$, the subgroup 
$$W_I = \big\langle w_i \mid w_i\in I \big\rangle $$ of $W$ is called a {\it standard parabolic subgroup}. It is well-known that $W_I$ is itself a Coxeter group. A conjugate of a standard parabolic subgroup is called a {\it parabolic subgroup}. If the graph $\Gamma_{(W, S)}$ is disconnected, then the Coxeter group $W$ is the direct product of its standard parabolic subgroups corresponding to each connected component of the graph $\Gamma_{(W, S)}$.
\par

Existence of multiple Coxeter systems $(W,S)$ and hence multiple  graphs  $\Gamma_{(W, S)}$ to a given Coxeter group $W$ led to the following well-known isomorphism problem for finite rank Coxeter groups.  

\begin{problem}\label{prob1}
Given Coxeter graphs $\Gamma_{(W_1, S_1)}$ and $\Gamma_{(W_2, S_2)}$, determine whether the Coxeter groups  $W_1$ and $W_2$ are isomorphic. 
\end{problem}

We refer the monograph \cite{Bahls2005} and the survey articles \cite{Muhlherr2006, Nuida2014} for more details on this problem. A more general problem \cite[Problem 2]{Muhlherr2006} is

\begin{problem}\label{prob2}
Given Coxeter graphs $\Gamma_{(W_1, S_1)}$ and $\Gamma_{(W_2, S_2)}$, find all isomorphisms between the groups $W_1$ and $W_2$.
\end{problem}

A solution to Problem \ref{prob2} is equivalent to a solution to Problem \ref{prob1} and a description of the automorphism group of $W$ for any Coxeter system $(W,S)$. Study of automorphism group of Coxeter groups have a long and rich history. Tits \cite{Tits} showed that if the graph  $\Gamma_{(W, S)}$ contains no triangles, then  $\Aut(W)$ is the semi-direct product of $\Inn(W)$ and $\Aut(F)$, where $F$ is the groupoid of commuting subsets of $S$ relative to symmetric difference.
\par

A Coxeter group is called {\it graph-universal} or {\it right angled} if all the exponents $m_{ij} \in \{2,\infty\}$. The automorphism group $\Aut(W)$ of a Coxeter group $W$ acts on the set of conjugacy classes of involutions in $W$ and the kernel $\Spec(W)$ of this action  is referred as the group of {\it special automorphisms} of $W$. If $W$ is right angled, using ideas from \cite{Tits}, M\"{u}hlherr   \cite{Muhlherr1998} gave a presentation for $\Spec(W)$ in terms of natural generators. Automorphism groups of certain right angled Coxeter groups, called twin groups, have been determined in \cite{James, NaikNandaSingh1}. A necessary and sufficient criteria on the graph $\Gamma_{(W, S)}$ of a right-angled Coxeter group $W$ such that its outer automorphism group contains a finite index subgroup that surjects onto the free group $F_2$ is given in \cite{SaleSusse2019}. Further, automorphism groups of universal Coxeter groups have been investigated in a recent work of Varghese \cite{Varghese}.
\par

There is another labeled graph $\mathcal{V}_{(W,S)}$ associated to a Coxeter system $(W, S)$ whose vertex set is the index set $\Pi$ of elements of $S$ and there is an edge between the vertices $i$ and $j$ if and only if $m_{ij}< \infty$. Further, the edge between $i$ and $j$ is labelled by $m_{ij}$ whenever $m_{ij}\neq 3$. If the graph $\mathcal{V}_{(W, S)}$ is disconnected, then $W$ is a free product of its standard parabolic subgroups corresponding to each connected component of the graph $\mathcal{V}_{(W,S)}$.
\par

 An {\it even Coxeter group} is one where all the exponents $m_{ij}$ are either even or infinite. In \cite{Bahls2006}, Bahls computed $\Aut(W)$ for any even Coxeter group whose graph $\mathcal{V}_{(W,S)}$ is connected, contains no edges labeled 2, and cannot be separated into more than 2 connected components by removing a single vertex.  In \cite{Franzsen2001, Franzsen2002}, automorphism groups of Coxeter groups of rank 3 are completely determined. Automorphisms of infinite Coxeter groups of rank $n$ having a finite parabolic subgroup of rank $(n-1)$ are investigated in \cite{Franzsen2003}.
 \par 
 
 Each permutation $\alpha$ of $\Pi$ which is an automorphism of the graph $\Gamma_{(W,S)}$ extends uniquely to an automorphism of the group $W$. Such an automorphism is called a {\it graph automorphism}, and the set of all graph automorphism forms a subgroup $\Aut \big(\Gamma_{(W, S)}\big)$ of $\Aut(W)$.  A Coxeter group $W$ is called { \it rigid} if for any two Coxeter systems $(W,S)$ and $(W,S')$ the graphs $\Gamma_{(W, S)}$ and $\Gamma_{(W, S')}$ are isomorphic. If $W$ is a rigid Coxeter groups such that $\Gamma_{(W, S)}$ is a complete graph with only odd edge labels, then it is shown in \cite{Rian2007} that $\Aut(W)$ is the semi-direct product of $\Inn(W)$ and the group of graph automorphisms $\Aut \big(\Gamma_{(W, S)}\big)$. 
\par

We say that a Coxeter system $(W, S)$ is {\it odd} if all the exponents $m_{ij}$'s are either odd or infinite. An {\it odd Coxeter group} $W$ is one which admits an odd Coxeter system. Odd Coxeter groups lie in the more general family of {\it groups of large type} where each exponent is at least three. We call a Coxeter system $(W, S)$ {\it connected} if the graph $\mathcal{V}_{(W, S)}$ is connected. The graph $\Gamma_{(W, S)}$ of an odd Coxeter group is always a complete graph containing $\mathcal{V}_{(W, S)}$ as its subgraph.
\par

In this paper, we focus on the family $\mathcal{W}$ of Coxeter groups that admit an odd connected Coxeter system of rank greater than one, and we would always work with such a system unless mentioned otherwise.  Let $\mathcal{TW}$ denote the subfamily of $\mathcal{W}$ consisting of groups $W$ that admit an odd connected Coxeter system $(W,S)$ for which the graphs $\mathcal{V}_{(W, S)}$ are trees. Solution of Problem \ref{prob1} for the family $\mathcal{TW}$ is known due to \cite{MuhlherrWeidmann2000, Brady2002}, wherein it is proved that two groups in $\mathcal{TW}$ are isomorphic if and only if they have odd connected Coxeter systems of the same rank and the same multiset of finite exponents.
\medskip

The paper is organised as follows. Section \ref{prelim} contains some preliminary results that are used in the subsequent sections. In Section \ref{automorphism section}, we give the complete description of the automorphism group of a group $\mathbb{W} \in \mathcal{TW}$ of given rank and given multiset of odd exponents such that its associated graph $\mathcal{V}_{(\mathbb{W}, \mathbb{S})}$ is a star shaped tree (Theorem \ref{splitting-by-graph-auto}).  Solution of Problem \ref{prob1} for the family $\mathcal{TW}$ due to \cite{MuhlherrWeidmann2000, Brady2002} and Theorem \ref{splitting-by-graph-auto} then give a complete solution of Problem \ref{prob2} for the family $\mathcal{TW}$. 
In Section \ref{splitting-sequences}, we give a sufficient condition for the splitting of the natural short exact sequence  $$1\rightarrow \Inn(\mathbb{W})\rightarrow \Aut(\mathbb{W}) \rightarrow \Out(\mathbb{W})\rightarrow 1$$ of automorphism groups (Theorem \ref{splitting-out-sequence}). In Section \ref{comparision}, we prove that groups in the family $\mathcal{TW}$ satisfy the $R_\infty$-property (Theorem \ref{thm-R-infinity}) and are (co)-Hopfian (Theorem \ref{thm-cohopfian}). The section then discusses a special Coxeter group $L_n \in \mathcal{TW}$ whose only finite exponent is three and is closely related to the well-known braid group $B_n$ and the twin group $T_n$, the planar analogue of $B_n$. We compare structural properties, automorphism groups,  $\R_\infty$-property and (co)-Hopfianity of $L_n$, $B_n$ and  $T_n$.
\medskip

\section{Preliminaries}\label{prelim}
We begin by setting some notations. For a group $G$, we denote its center, commutator subgroup, group of automorphisms, group of inner automorphisms and group of outer automorphisms by $\Z(G)$, $G'$, $\Aut(G)$, $\Inn(G)$ and  $\Out(G)$, respectively. For elements $x, y \in G$, the commutator $x^{-1}y^{-1}xy$ of $x$ and $y$ is denoted by $[x,\; y]$. The centraliser of an element $x \in G$ is denoted by $\C_G(x)$. We denote the element $y^{-1}xy$ by $x^y$ and the inner automorphism induced by $x$ by $\widehat{x}$. Cyclic group of order $n$ is denoted by $\mathbb{Z}/n\mathbb{Z}$ and the multiplicative group of integers mod $n$ that are coprime to $n$ is  denoted by  $\U_n$. The  symmetric group on $n \ge 2$ symbols is denoted by $S_n$ and the dihedral group of order $2n$ is denoted by $D_{2n}$.
\par

We now recall some well-known results on Coxeter groups. Let $(W, S)$ be a Coxeter system. Each element $w\in W$ can be written as a product of generators
$$w = w_{i_1}w_{i_2}\cdots w_{i_k}.$$
The minimal $k$ among all such expressions for $w$ is called the {\it length} of $w$ and we denote it by $\ell(w)$. 
\par

The following result about involutions in Coxeter groups will be useful \cite[p.4]{Muhlherr2006}.

\begin{lemma}\label{general criterion for involution}
Let $(W, S)$ be a Coxeter system and $w\in W$ an involution. Then there exists a finite parabolic subgroup $W_J$ and element $x\in \Z(W_J)$ with $\ell(x)> \ell(x')$ for all $x\neq x'\in W_J$ such that $w = yxy^{-1}$ for some $y\in W$.
\end{lemma}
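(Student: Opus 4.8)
The plan is to obtain the conjugate $x$ by choosing a representative of minimal length in the conjugacy class of $w$, and then to show that this minimal representative is automatically the longest element of the finite parabolic subgroup attached to its descent set. Since conjugation preserves the order of an element, if I pick $x$ of minimal length $\ell(x)$ among all conjugates of $w$, then $x$ is again an involution and $w = yxy^{-1}$ for some $y \in W$. Writing $J := \{\, s \in S : \ell(sx) < \ell(x)\,\}$ for the (left) descent set of $x$, the goal becomes to prove that $W_J$ is finite, that $x = w_J$ is its longest element, and that $x \in \Z(W_J)$.

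The central step is to analyse how the length behaves under conjugation by a descent. I would work in the geometric (Tits) representation, with simple roots $\alpha_s$ and inversion set $N(x) = \{\, \alpha > 0 : x(\alpha) < 0 \,\}$ of size $\ell(x)$. Because $x$ is an involution, its left and right descent sets coincide, and for $s \in J$ the root $x(\alpha_s)$ is negative. The key dichotomy is that for such an $s$ one has either $x(\alpha_s) = -\alpha_s$, in which case $s$ and $x$ commute and $\ell(sxs) = \ell(x)$, or $x(\alpha_s)$ is some other negative root, in which case conjugation removes two inversions and $\ell(sxs) = \ell(x) - 2$. The minimality of $\ell(x)$ rules out the second case, so for every $s \in J$ we must have $x(\alpha_s) = -\alpha_s$, equivalently $sxs = x$. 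I expect this length computation — controlling $N(sxs)$ in terms of $N(x)$ and the sign of $x(\alpha_s)$ — to be the main obstacle, as it is the technical heart of the argument; everything afterward is geometric bookkeeping.

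With the relations $x(\alpha_s) = -\alpha_s$ for all $s \in J$ in hand, $x$ acts as $-\mathrm{Id}$ on the span $V_J$ of $\{\alpha_s : s \in J\}$, so $x$ sends every positive root of the subsystem $\Phi_J$ to a negative root; thus $\Phi_J^{+} \subseteq N(x)$. Since $N(x)$ is finite, $\Phi_J$ is finite, so $W_J$ is a finite Coxeter group with a unique longest element $w_J$ satisfying $\ell(w_J) > \ell(x')$ for every $x' \in W_J$ with $x' \neq w_J$. To identify $x$ with $w_J$ I would show that $N(x) = \Phi_J^{+}$: the inclusion $\supseteq$ is already established, while the reverse inclusion follows from the facts that the only simple roots inverted by $x$ lie in $J$, together with the standard closedness (biconvexity) of inversion sets, which by an induction on root height forces every inverted positive root to lie in $\Phi_J$. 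Since the longest element is the unique element of $W_J$ whose inversion set is all of $\Phi_J^{+}$, this gives $x = w_J$, and in particular $x$ is the strictly longest element of $W_J$.

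Finally, centrality is now immediate: $x = w_J$ lies in $W_J$ and, by the relations $sxs = x$, commutes with every generator $s \in J$ of $W_J$, so $x \in \Z(W_J)$. Equivalently, $x$ acts as $-\mathrm{Id}$ on $V_J$, and the longest element of a finite Coxeter group is central exactly when it acts as $-\mathrm{Id}$ on the reflection representation. Together with $w = yxy^{-1}$ from the first paragraph, this yields the conclusion of the lemma.
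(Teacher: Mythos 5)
The paper itself gives no proof of this lemma: it is quoted from M\"uhlherr's survey, and is originally Richardson's theorem on involutions in Coxeter groups. Your plan is exactly the classical argument: pick $x$ of minimal length in the conjugacy class of $w$, establish the dichotomy that a descent $s \in J$ satisfies either $x(\alpha_s) = -\alpha_s$ (whence $sxs = x$) or $\ell(sxs) = \ell(x) - 2$, use minimality to force $x(\alpha_s) = -\alpha_s$ for all $s \in J$, deduce $\Phi_J^+ \subseteq N(x)$ and hence finiteness of $W_J$, and conclude $x = w_J$ is the longest element and central. The dichotomy step, which you flag as the technical heart, is correctly stated and your sketch of it is sound, as are the deductions of $\Phi_J^+ \subseteq N(x)$, finiteness, and centrality.

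The genuine gap is in the step you dismiss as bookkeeping: the inclusion $N(x) \subseteq \Phi_J^+$. The two facts you invoke --- that the only simple roots inverted by $x$ are the $\alpha_s$ with $s \in J$, and biconvexity of inversion sets --- hold for \emph{every} element of $W$ whose descent set is $J$, yet the conclusion fails in general: in the infinite dihedral group $\langle s, t\rangle$ the involution $x = sts$ has descent set $J = \{s\}$ but $N(x) = \{\alpha_s,\, s(\alpha_t),\, st(\alpha_s)\} \not\subseteq \Phi_{\{s\}}$. So an induction on height resting only on those two facts would ``prove'' a false statement; the minimality hypothesis must re-enter here through the relations $sx = xs$ for $s \in J$ and $x^2 = 1$. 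A repair along your lines: first note that $x(\alpha_t) \notin \Phi_J$ for $t \notin J$ (otherwise $\alpha_t = x\big(x(\alpha_t)\big) = -x(\alpha_t) < 0$, since $x$ is $-\mathrm{Id}$ on $V_J$). Now take $\beta \in N(x) \setminus \Phi_J$ of minimal height (necessarily non-simple) and choose a simple $\alpha_r$ with $(\beta, \alpha_r) > 0$, so that $r(\beta) = \beta - c\alpha_r$ is positive of smaller height with $c > 0$. If $r \in J$, then $x\big(r(\beta)\big) = r\big(x(\beta)\big)$, which is negative unless $x(\beta) = -\alpha_r$, i.e.\ $\beta = \alpha_r$; so $r(\beta) \in N(x)$, minimality gives $r(\beta) \in \Phi_J$, and then $\beta = r\big(r(\beta)\big) \in \Phi_J$, a contradiction. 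If $r \notin J$ and $r(\beta) \notin N(x)$, then $x(\beta) = x\big(r(\beta)\big) + c\,x(\alpha_r)$ is a nonnegative combination of positive roots, hence positive, a contradiction; if instead $r(\beta) \in N(x)$, minimality puts $r(\beta) \in \Phi_J^+$, so $x(\beta) = -r(\beta) + c\,x(\alpha_r)$ has strictly positive coefficient on some simple root outside $J$ (present in $x(\alpha_r) \in \Phi^+ \setminus \Phi_J$, untouched by $-r(\beta) \in V_J$), again contradicting $x(\beta) < 0$. With this (or the alternative route via the coset decomposition $x = u\,w_J$ with $u$ the minimal coset representative), your identification $x = w_J$ and the centrality conclusion go through as you describe.
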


The next result gives relation between the group of inner automorphisms and the group of graph automorphisms of infinite Coxeter groups \cite[p.34, Lemma 2.14]{Franzsen-thesis}.

\begin{lemma}\label{graph-intersection-inner}
Let $W$ be an infinite Coxeter group with no finite irreducible components and $(W, S)$ its Coxeter system. Then $$\Aut\big(\Gamma_{(W, S)}\big) \cap \Inn(W) = 1.$$ 
\end{lemma}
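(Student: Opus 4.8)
The plan is to use the geometric (reflection) representation of $(W,S)$ and to translate the condition ``graph automorphism $=$ inner automorphism'' into a statement about simple roots. Let $\rho \in \Aut\big(\Gamma_{(W,S)}\big) \cap \Inn(W)$ and choose $g \in W$ with $\rho = \widehat{g}$; it suffices to prove that $g = 1$, for then $\rho = \id$. Since $\rho$ is a graph automorphism it permutes $S$, so $g s g^{-1} = \rho(s) \in S$ for every $s \in S$. I would work in the standard geometric representation of $W$ on $V = \bigoplus_{s \in S} \mathbb{R}\, e_s$, in which each $s \in S$ acts as the reflection determined by the simple root $e_s$ and $W$ preserves the associated symmetric bilinear form $B$. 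Because a reflection is determined by its root up to sign, the relation $g s g^{-1} = \rho(s)$ forces $g(e_s) = \epsilon_s\, e_{\rho(s)}$ for some $\epsilon_s \in \{\pm 1\}$; thus $g$ acts as a signed permutation of the simple system $\Delta = \{e_s : s \in S\}$.

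First I would determine the signs $\epsilon_s$. As $g$ preserves $B$ and $\rho$ preserves the exponents (so $B(e_{\rho(s)}, e_{\rho(t)}) = B(e_s, e_t)$), the computation $B(e_s, e_t) = B(g e_s, g e_t) = \epsilon_s \epsilon_t\, B(e_s, e_t)$ shows that $\epsilon_s = \epsilon_t$ whenever $B(e_s, e_t) \neq 0$, i.e.\ whenever $s$ and $t$ are joined by an edge of $\Gamma_{(W,S)}$. Hence $s \mapsto \epsilon_s$ is constant on each connected component of $\Gamma_{(W,S)}$, that is, on each irreducible component of $(W,S)$. I would also record that $\rho$ fixes each such component setwise: since $W$ is the direct product of the standard parabolic subgroups attached to the components of $\Gamma_{(W,S)}$, conjugation by any element of $W$ preserves each direct factor, so $\rho = \widehat{g}$ preserves every component subgroup $W_C$ and hence every component $C$.

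Next I would assume $g \neq 1$ and look for a contradiction. If $g \neq 1$ then $g$ has a right descent $s$, i.e.\ $\ell(gs) < \ell(g)$, which is equivalent to $g(e_s) \in \Phi^-$; since $g(e_s) = \epsilon_s\, e_{\rho(s)}$, this forces $\epsilon_s = -1$. Let $J$ be the union of those components on which the sign equals $-1$; by the previous paragraph $J$ is nonempty and $\rho(J) = J$, and $W_J$ is a standard parabolic subgroup that is a direct factor of $W$, with root system contained in $V_J = \bigoplus_{s \in J} \mathbb{R}\, e_s$. Writing $g = g_J\, g_{J^{c}}$ along this direct product decomposition, the factor $g_{J^{c}}$ acts trivially on $V_J$, so within the Coxeter system $(W_J, J)$ the element $g_J$ satisfies $g_J(e_s) = -e_{\rho(s)} \in \Phi_J^{-}$ for every $s \in J$; that is, $g_J$ sends every simple root of $W_J$ to a negative root.

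The crux is the standard fact that an element of a Coxeter group which sends every simple root to a negative root — equivalently, which has every generator as a right descent — must be the longest element, and such an element exists only when the group is finite. Applying this inside $(W_J, J)$ forces $W_J$ to be finite, whence $(W,S)$ has a finite irreducible component, contradicting the hypothesis that $W$ has no finite irreducible components. Therefore $\epsilon_s = +1$ for all $s \in S$, so $g$ maps $\Phi^+$ into $\Phi^+$, giving $\ell(g) = 0$ and $g = 1$; hence $\rho = \id$ and the intersection is trivial. The step I expect to be most delicate is exactly the sign-and-component bookkeeping that isolates a genuine standard parabolic on which $g$ reverses every simple root: this is where the hypothesis is essential, since in a finite Coxeter group the longest element $w_0$ does induce, by conjugation, a nontrivial graph automorphism, so the conclusion fails without excluding finite components.
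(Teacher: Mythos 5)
Your proof is correct. Note, however, that the paper itself contains no proof of this lemma: it is quoted verbatim from Franzsen's thesis (\cite[p.34, Lemma 2.14]{Franzsen-thesis}), so you have supplied a self-contained argument where the paper only cites. Your route is the standard root-system one, and as far as the cited source goes it is essentially the same method: translate $gsg^{-1}=\rho(s)$ into $g(e_s)=\epsilon_s e_{\rho(s)}$ (valid even when $B$ is degenerate, since $B(e_s,e_s)=1$ keeps $e_s$ out of the radical, so a reflection does determine its unit root up to sign), propagate signs along edges of $\Gamma_{(W,S)}$ using $B(e_s,e_t)=-\cos(\pi/m_{st})\neq 0$ exactly when $m_{st}\geq 3$, and kill the $-1$ signs via the fact that an element sending every simple root of a standard parabolic to a negative root forces $\ell(g_J)=\lvert\Phi_J^+\rvert<\infty$, i.e.\ $W_J$ finite. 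Your bookkeeping is sound at the two delicate points: $\rho=\widehat g$ preserves each irreducible factor because each factor is normal in the direct product, so $J$ really is a $\rho$-invariant union of components; and $g_{J^c}$ acts trivially on $V_J$ because $B(e_s,e_t)=0$ across components, so the descent analysis genuinely takes place inside $(W_J,J)$. Your closing remark that the longest element $w_0$ of a finite component would violate the conclusion correctly identifies why the hypothesis cannot be dropped. The only cosmetic caveat: you should make explicit (as you implicitly do) that automorphisms of $\Gamma_{(W,S)}$ preserve the edge labels $m_{st}$, which is what licenses $B(e_{\rho(s)},e_{\rho(t)})=B(e_s,e_t)$; with the paper's definition of a labeled-graph automorphism this is immediate.
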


\begin{lemma}\label{trivial center}
If $W \in \mathcal{W}$, then $\Z(W) = 1$.
\end{lemma}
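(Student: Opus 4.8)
The plan is to run the argument through the (faithful) geometric representation of the Coxeter system, using decisively that an odd system has no commuting pair of generators. Let $\rho\colon W\to\GL(V)$ be the geometric representation on $V=\bigoplus_{i\in\Pi}\mathbb{R}e_i$, equipped with the $W$-invariant symmetric bilinear form $B$ with $B(e_i,e_i)=1$ and $B(e_i,e_j)=-\cos(\pi/m_{ij})$, each $w_i$ acting as the reflection $v\mapsto v-2B(v,e_i)e_i$. By Tits' theorem $\rho$ is faithful, so it is enough to determine $\rho(z)$ for a central $z$ and show it must be the identity.

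First I would show $\rho(z)$ is a scalar. If $z\in\Z(W)$, then $\rho(z)$ commutes with every reflection $\rho(w_i)$ and hence preserves its $(-1)$-eigenline $\mathbb{R}e_i$, so $\rho(z)e_i=\lambda_i e_i$ for some $\lambda_i$; invariance of $B$ gives $\lambda_i^2=\lambda_i^2 B(e_i,e_i)=1$, whence $\lambda_i\in\{\pm1\}$. The odd hypothesis now enters: since every exponent is at least $3$, the graph $\Gamma_{(W,S)}$ is complete, so $B(e_i,e_j)\neq0$ for all $i\neq j$ (with the convention $B(e_i,e_j)=-1$ when $m_{ij}=\infty$). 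Applying $B$-invariance to $(e_i,e_j)$ yields $\lambda_i\lambda_j B(e_i,e_j)=B(e_i,e_j)$, so $\lambda_i\lambda_j=1$ and thus $\lambda_i=\lambda_j$. As the $e_i$ span $V$, all $\lambda_i$ coincide with a single $\lambda\in\{\pm1\}$, i.e.\ $\rho(z)=\lambda\,\Id$.

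If $\lambda=1$ then $\rho(z)=\Id$, and faithfulness forces $z=1$, as desired. The real work is to eliminate $\lambda=-1$, that is, to show $-\Id\notin\rho(W)$. For this I would classify the finite members of $\mathcal{W}$: for any three generators the parabolic $\langle w_i,w_j,w_k\rangle$ has odd exponents all $\ge 3$, so $\tfrac1{m_{ij}}+\tfrac1{m_{ik}}+\tfrac1{m_{jk}}\le 1$, making this triangle subgroup infinite; hence a finite $W\in\mathcal{W}$ must have rank $2$ and so $W\cong D_{2m}$ with $m=m_{ij}$ odd. Since $\rho(z)=-\Id$ is an involution reversing the fundamental chamber, it can lie in $\rho(W)$ only when $W$ is finite; but in the reflection representation of $D_{2m}$ the map $-\Id$ is the rotation by $\pi$, which lies in $\rho(D_{2m})$ only for even $m$. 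This contradiction disposes of $\lambda=-1$ and yields $\Z(W)=1$.

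The main obstacle is precisely this last step: the scalar computation is automatic once $B(e_i,e_j)\neq0$ for all pairs, while ruling out $\lambda=-1$ requires the input that the only finite groups in $\mathcal{W}$ are the odd dihedral groups, whose reflection representations omit $-\Id$. A shorter alternative is to cite the standard structural fact that an irreducible Coxeter group has nontrivial center only if it is finite with longest element $-\Id$, and then note that the finite members of $\mathcal{W}$ are exactly the $D_{2m}$ with $m$ odd; I would nonetheless prefer the geometric argument above to keep the lemma self-contained. (One could also invoke Lemma \ref{general criterion for involution} once one knows $\Z(W)$ is $2$-torsion, writing a central involution as the longest element of a finite parabolic $W_J$ and using the absence of $m_{ij}=2$ to force $J=S$; this again reduces to the dihedral case.)
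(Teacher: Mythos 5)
Your proof is correct, but it takes a genuinely different route from the paper's. The paper settles the lemma in three lines by citation: a rank-two member of $\mathcal{W}$ is $D_{2m}$ with $m\ge 3$ odd, hence centerless; for rank greater than two it invokes an exercise in Bj\"orner--Brenti to conclude $W$ is infinite, and then Bourbaki's theorem that an infinite irreducible Coxeter group has trivial center (irreducibility being automatic since $\Gamma_{(W,S)}$ is complete). You instead reprove both cited facts for this special class inside the geometric representation: the observation that $B(e_i,e_j)\neq 0$ for all $i\neq j$ --- exactly where the odd/large-type hypothesis enters --- forces a central element to act as $\pm\Id$, replacing the general Schur-type argument behind Bourbaki's theorem; and your triangle-group estimate $\tfrac{1}{m_{ij}}+\tfrac{1}{m_{ik}}+\tfrac{1}{m_{jk}}\le 1$ is precisely the content of the Bj\"orner--Brenti exercise, reducing the finite case to the odd dihedral groups, where $-\Id$ is the rotation by $\pi$ and hence absent from the reflection representation when $m$ is odd. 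What the paper's route buys is brevity; what yours buys is self-containedness and a transparent localization of where oddness is used. The one step you should shore up is ``$-\Id\in\rho(W)$ only if $W$ is finite'': as phrased it leans on an uncited standard fact about the Tits cone $U$, namely that $U$ is the whole (dual) space iff $W$ is finite --- if $\rho(z)=-\Id$ then the contragredient of $z$ is also $-\Id$, forcing $U=-U$, and a convex cone with nonempty interior equal to its negative is a linear subspace, hence everything; spell this out or cite it (the cone lives in the contragredient module, which is harmless here). Your parenthetical alternative --- quoting that an irreducible Coxeter group has nontrivial center only if it is finite --- is essentially the paper's own proof.
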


\begin{proof}
If $W$ is of rank two, then $W\cong D_{2m}$ for some odd integer $m \ge 3$, in which case $\Z(W) = 1$. It follows from \cite[p.22, Exercise 4]{Bjorner2005} that if rank of $W$ is greater than two, then $W$ is infinite. By \cite[p.137]{Bourbaki}, the center of an infinite irreducible Coxeter group is trivial. Since $W$ is irreducible, the assertion  follows.
\end{proof}

We need the following result concerning centralisers of Coxeter generators, which is a special case of \cite[Theorem, p.466]{Brink}.

\begin{lemma}\label{involution result of brink}
Let $W \in \mathcal{TW}$ and  $(W, S)$ its Coxeter system. Then the centraliser $\C_W(w_i )$ is a Coxeter group for each $w_i \in S$.\end{lemma}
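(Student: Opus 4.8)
The plan is to realize this lemma as a direct instance of Brink's description of centralisers of reflections \cite{Brink}. First I would observe that each Coxeter generator $w_i \in S$ is a reflection in $W$, so that Brink's theorem applies verbatim to $\C_W(w_i)$. That theorem expresses the centraliser of a reflection in an arbitrary Coxeter group as a semidirect product
$$\C_W(w_i) \;\cong\; W_i \rtimes F_i,$$
where $W_i$ is a Coxeter group and $F_i$ is a finitely generated free group. Thus the entire content of the lemma reduces to showing that the free factor $F_i$ is trivial for groups in the family $\mathcal{TW}$.

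Next I would recall how the rank of $F_i$ is read off from Brink's construction. The free generators of $F_i$ are governed by the independent cycles of a graph that Brink attaches to the pair $(W, w_i)$; concretely, this auxiliary graph is assembled from the odd edges of the Coxeter diagram lying in the connected component of $w_i$, and its first Betti number equals the rank of $F_i$. For an odd Coxeter system every finite exponent $m_{ij}$ is odd and every odd exponent is finite, so the odd graph of $(W, S)$ coincides with the graph $\mathcal{V}_{(W, S)}$. Since $W \in \mathcal{TW}$, the graph $\mathcal{V}_{(W, S)}$ is a tree and is therefore acyclic, whence Brink's associated graph carries no independent cycles.

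Consequently its first Betti number is zero, the free factor $F_i$ is trivial, and $\C_W(w_i) \cong W_i$ is a Coxeter group, as required. I expect the main obstacle to lie in this middle step: one must correctly match Brink's combinatorial data -- the spanning tree and the surplus edges that produce the free generators -- with the tree structure of $\mathcal{V}_{(W, S)}$, and verify that a diagram which is already a tree contributes no surplus edges, and hence no free generators. Once this correspondence is pinned down, the conclusion is immediate.
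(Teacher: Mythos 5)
Your proposal is correct and takes essentially the same route as the paper: the paper gives no independent argument, simply quoting the lemma as a special case of Brink's theorem \cite{Brink}, and your write-up just makes that deduction explicit. In particular, your identification of Brink's odd graph with $\mathcal{V}_{(W,S)}$ (valid because in an odd Coxeter system the finite exponents are precisely the odd ones), so that the tree hypothesis forces first Betti number zero, kills the free factor, and leaves the Coxeter part of Brink's decomposition, is exactly the verification the paper's citation leaves implicit.
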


\begin{lemma}\label{involutions are conjugate in TW_n}
If $W \in \mathcal{W}$, then any two involutions in $W$ are conjugate.
\end{lemma}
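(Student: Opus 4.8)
The plan is to reduce the statement to two facts: first, that all the Coxeter generators $w_i \in S$ lie in a single conjugacy class of $W$, and second, that every involution of $W$ is conjugate to one of the generators. Combining these two facts immediately yields that any two involutions of $W$ are conjugate, which is what we want.

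For the first fact, I would use the standard observation that if $m_{ij}$ is odd and finite then $w_i$ and $w_j$ are conjugate. Working inside the dihedral subgroup $\langle w_i, w_j \rangle \cong D_{2m_{ij}}$ and writing $r = w_i w_j$, one checks that $w_i$ inverts $r$, so the conjugates of $w_i$ by powers of $r$ are exactly the elements $r^{2k} w_i$; since $m_{ij}$ is odd, $2$ is invertible modulo $m_{ij}$ and these exhaust all reflections, in particular $w_j$. Now, because $(W,S)$ is an odd connected system, the graph $\mathcal{V}_{(W,S)}$ is connected and each of its edges carries an odd label. Walking along a path in $\mathcal{V}_{(W,S)}$ between vertices $i$ and $j$ and applying the previous observation to each edge shows that $w_i$ and $w_j$ are conjugate for all $i,j$. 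Hence $S$ is contained in a single conjugacy class.

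For the second fact, I would invoke Lemma \ref{general criterion for involution}: any involution $w$ is conjugate to the unique longest element $x$ of some finite parabolic subgroup $W_J$, with $x \in \Z(W_J)$. The crucial point is to classify the finite standard parabolic subgroups of an odd Coxeter system. Since no exponent equals $2$, no two generators commute; in particular, a finite $W_J$ can neither decompose as a nontrivial direct product nor be an irreducible finite Coxeter group of rank $\geq 3$, whose Coxeter diagram necessarily contains a pair of non-adjacent, hence commuting, generators. Thus the only possibilities are $|J| \leq 1$ or $W_J \cong D_{2m}$ with $m$ odd. But for $W_J \cong D_{2m}$ with $m$ odd we have $\Z(W_J) = 1$, so its unique longest element (a reflection) is not central, and such a $W_J$ cannot furnish the central longest element $x$ required by the lemma. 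Therefore $W_J$ has rank at most one, and since $w \neq 1$ forces $x \neq 1$, we must have $W_J = \langle w_j \rangle$ with $x = w_j$. Hence $w$ is conjugate to the generator $w_j$.

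I expect the main obstacle to be this second step, specifically the careful verification that in an odd Coxeter system the only finite parabolic subgroup admitting a nontrivial central longest element has rank one. This rests on the absence of commuting generators together with the classification of finite irreducible Coxeter groups, and one must be attentive to rule out both the reducible case and the case of rank $\geq 3$. Once this is established, the two facts combine to show that every involution is conjugate to a common generator, and hence any two involutions of $W$ are conjugate to each other.
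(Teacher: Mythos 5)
Your proposal is correct and follows essentially the same route as the paper: conjugacy of all generators via odd edges and connectedness of $\mathcal{V}_{(W,S)}$ (which the paper cites from Bourbaki), followed by Lemma \ref{general criterion for involution} together with the observation that finite parabolic subgroups of an odd system have rank at most two, the odd dihedral case being excluded by triviality of its center. The only difference is cosmetic: where the paper cites \cite{Bourbaki} and \cite[p.22, Exercise 4]{Bjorner2005}, you prove these facts directly (the dihedral conjugation computation, and the rank bound via the classification of finite irreducible Coxeter groups), both correctly.
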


\begin{proof}
Let $(W, S)$ be an associated Coxeter system of $W$. Then by \cite[Proposition 3, Page 5]{Bourbaki} it follows that any two generators $w, w'\in S$ are conjugates of each others in $W$. Hence, to complete the proof, it suffices to show that if $w$ is an involution, then $w$ is conjugate to some generator $w_i$. By Lemma \ref{general criterion for involution}, it follows that $w$ is conjugate to some central element of some finite parabolic subgroup which is also of maximal length in that parabolic subgroup. By \cite[p.22, Exercise 4]{Bjorner2005}, all finite parabolic subgroups of $W$ are of rank one or two. A finite rank two parabolic subgroup is isomorphic to a dihedral group $D_{2m}$ for some odd integer $m\geq 3$, which has trivial center. A rank one parabolic subgroup is of the form $\langle w_i \rangle^{x}$ for some $1\leq i \leq n$ and $x\in W$, where the only non-trivial element is $w_i^{x}$. Thus, $w$ is conjugate to $w_i$.
\end{proof}

Since involutions in  $W \in \mathcal{W}$ forms a single conjugacy class, we obtain the following corollary.

\begin{corollary}
If $W \in \mathcal{W}$, then $\Spe(W) = \Aut(W)$.
\end{corollary}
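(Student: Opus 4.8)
The plan is to read off the result directly from the definition of $\Spe(W)$ together with Lemma \ref{involutions are conjugate in TW_n}. Recall from the introduction that $\Spe(W)$ is defined as the kernel of the natural action of $\Aut(W)$ on the set of conjugacy classes of involutions in $W$. So the entire content of the corollary is the claim that this action is trivial, i.e.\ that every automorphism of $W$ fixes every conjugacy class of involutions.

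First I would observe that any $\phi \in \Aut(W)$ permutes the set of involutions of $W$, since $\phi$ preserves the order of elements and an involution is precisely an element of order two. Because $\phi$ is an automorphism, it also carries conjugate elements to conjugate elements, so $\phi$ induces a well-defined permutation of the set of conjugacy classes of involutions. This is exactly the action whose kernel defines $\Spe(W)$.

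The key step is then to invoke Lemma \ref{involutions are conjugate in TW_n}: for $W \in \mathcal{W}$, any two involutions are conjugate, so the set of conjugacy classes of involutions is a singleton. Any permutation of a one-element set is the identity, so the induced action of $\Aut(W)$ on conjugacy classes of involutions is trivial. Hence its kernel is all of $\Aut(W)$, which gives $\Spe(W) = \Aut(W)$.

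There is no real obstacle here: once Lemma \ref{involutions are conjugate in TW_n} is in hand, the corollary is immediate from unwinding the definition of a special automorphism. The only point deserving a word of care is the trivial verification that an automorphism genuinely induces a map on the \emph{set of conjugacy classes} (rather than merely on involutions), which follows from its compatibility with conjugation; after that the single-class hypothesis forces the action to be trivial.
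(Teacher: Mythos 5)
Your proposal is correct and matches the paper exactly: the paper deduces the corollary from Lemma \ref{involutions are conjugate in TW_n} with the one-line remark that involutions in $W \in \mathcal{W}$ form a single conjugacy class, so the $\Aut(W)$-action on conjugacy classes of involutions is trivial and its kernel $\Spe(W)$ is all of $\Aut(W)$. Your version merely spells out the routine verification that automorphisms induce a well-defined action on conjugacy classes, which the paper leaves implicit.
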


The following observation is useful.

\begin{lemma}\label{centralisers of involutions}
Let $W \in \mathcal{TW}$ be a Coxeter group. If $w\in W$ is an involution, then $$\C_W(w) = \langle w \rangle.$$
\end{lemma}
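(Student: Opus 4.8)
The plan is to reduce to a single Coxeter generator and then reformulate the whole statement as an assertion about commuting reflections. By Lemma \ref{involutions are conjugate in TW_n}, every involution $w$ of $W$ is conjugate to some $w_i \in S$, say $w = w_i^{\,g}$ for some $g \in W$. Since centralisers of conjugate elements are conjugate, $\C_W(w_i^{\,g}) = \C_W(w_i)^{\,g}$, while $\langle w_i^{\,g}\rangle = \langle w_i\rangle^{\,g}$. Hence the equality $\C_W(w) = \langle w\rangle$ holds for all involutions if and only if $\C_W(w_i) = \langle w_i\rangle$ holds for a single generator. So I would fix $w_i \in S$ and write $C = \C_W(w_i)$.

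By Lemma \ref{involution result of brink}, $C$ is a Coxeter group, and since every element of $C$ commutes with $w_i$ we have $w_i \in \Z(C)$. Let $(C,T)$ be a Coxeter system for $C$. Each $t \in T$ is an involution of $C$, hence an involution of $W$, and therefore, again by Lemma \ref{involutions are conjugate in TW_n}, conjugate to $w_i$; in particular each such $t$ is a reflection of $W$. Moreover each $t$ commutes with $w_i$, because $w_i$ is central in $C$. Thus $C = \langle w_i\rangle$ would follow at once from the single claim that no reflection of $W$ other than $w_i$ commutes with $w_i$: granting it, every generator $t \in T$ must equal $w_i$, so $|T| = 1$ and $C = \langle w_i\rangle$. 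In fact this reduction is reversible, so the Lemma is precisely equivalent to the assertion that two distinct reflections of $W$ never commute.

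The heart of the proof, and the step I expect to be the main obstacle, is therefore this last assertion. Two commuting reflections $r, r'$ would generate a subgroup $\langle r, r'\rangle \cong \mathbb{Z}/2\mathbb{Z} \times \mathbb{Z}/2\mathbb{Z}$, that is, a dihedral reflection subgroup of $W$ with an even bond, equivalently a pair of orthogonal roots. Locally this cannot occur: every finite exponent $m_{ij}$ of $(W,S)$ is odd, and the rank-two group $I_2(m)$ with $m$ odd contains no pair of orthogonal roots, since orthogonal roots appear in $I_2(m)$ only when $m$ is even; consequently no two simple reflections commute. The difficulty is to upgrade this from simple reflections to all reflections, ruling out orthogonal roots that could in principle arise only after conjugation, deep inside the root system. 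This is exactly where the hypothesis that $\mathcal{V}_{(W,S)}$ is a tree is essential: the acyclicity of the diagram prevents the root system of $W$ from ever developing an orthogonal pair. Once this is established, the reduction of the previous paragraph gives $\C_W(w) = \langle w\rangle$ for every involution $w$.
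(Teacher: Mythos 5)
Your reduction is exactly the paper's: conjugate $w$ to a generator via Lemma \ref{involutions are conjugate in TW_n}, invoke Lemma \ref{involution result of brink} to see that $\C_W(w_i)$ is a Coxeter group, and conclude that it suffices to show no involution (equivalently, no reflection) other than $w_i$ centralises $w_i$. But at precisely the point you yourself call ``the heart of the proof'' the argument stops: you assert that acyclicity of $\mathcal{V}_{(W,S)}$ ``prevents the root system of $W$ from ever developing an orthogonal pair'' and then write ``once this is established'' --- it is never established. Since everything before that is formal bookkeeping, this is a genuine gap, and it is the whole content of the lemma. The paper closes it with a short argument requiring no root-system analysis at all: two distinct commuting involutions generate a subgroup isomorphic to $\mathbb{Z}/2\mathbb{Z}\times\mathbb{Z}/2\mathbb{Z}$, which, being finite, is contained in a finite parabolic subgroup of $W$ \cite[Exercise 2d, p.130]{Bourbaki1968}; but every finite parabolic subgroup of $W$ has order $2m$ with $m$ odd (finite parabolics have rank at most two because all exponents are odd, and the finite rank-two ones are dihedral of order $2m$ with $m$ odd), so it cannot contain a subgroup of order $4$.

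Moreover, your diagnosis of where the tree hypothesis enters is mistaken, which suggests that an attempt to fill the gap along your proposed lines would be looking in the wrong place. The non-existence of commuting reflections (equivalently, of orthogonal root pairs, or of Klein four subgroups) holds in \emph{every} odd Coxeter group, cycles included, by the finite-parabolic argument above, which uses only oddness of the exponents. The tree hypothesis is consumed at the step you treated as routine, namely Lemma \ref{involution result of brink}: Brink's theorem in general presents the centraliser of a reflection with a free factor whose rank is governed by the cycles in the odd part of the diagram, and acyclicity is what makes this free part trivial, so that $\C_W(w_i)$ is a genuine Coxeter group generated by involutions. For instance, in the $(3,3,3)$ triangle group --- odd, but with a cycle --- the centraliser of a reflection $s$ is $\langle s\rangle\times\mathbb{Z}$, strictly larger than $\langle s\rangle$, even though no two distinct reflections commute there either: the extra centralising elements have infinite order, and a reformulation of the lemma purely in terms of commuting reflections (which silently relies on Brink's result to rule out such elements) does not detect them.
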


\begin{proof}
By Lemma \ref{involutions are conjugate in TW_n}, it is sufficient to prove that $\C_W(w_1) = \langle w_1 \rangle$. By Lemma \ref{involution result of brink},  $\C_W(w_1)$ is a Coxeter group. Let $ x\in \C_W(w_1)\setminus \langle w_1 \rangle$ be an involution. Then $\langle w_1, x \rangle$ is a finite subgroup, in particular of order $4$. By \cite[Exercise 2d, p.130]{Bourbaki1968}, it follows that $\langle w_1, x \rangle$ is contained in some finite parabolic subgroup of $W$. But this is not possible, since all finite parabolic subgroups of $W$ are of order $2m$ for some odd integer $m$. This implies that $\C_W(w_1) = \langle w_1 \rangle$.
\end{proof}

The following result follows from \cite[Exercise 9, p.33]{Bourbaki}. Alternatively, it can be obtained by a direct application of Reidemeister-Schreier method. 
 
\begin{theorem}\label{generalised theorem for commutator}
Let $W \in \mathcal{TW}$ and $(W, S)$ be its odd connected Coxeter system of rank $n$ with $\big\{m_1, m_2, \dots, m_{n-1} \big\}$ as its multiset of finite exponents. Then
$$W' \cong \big ( \mathbb{Z}/m_1\mathbb{Z} \big ) \ast \big ( \mathbb{Z}/m_2\mathbb{Z} \big ) \ast \cdots \ast \big ( \mathbb{Z}/m_{n-1}\mathbb{Z} \big ).$$
Moreover, $W'$ consists of all even length elements of $W$ and $W = W' \rtimes \langle w_i \rangle,$ where $w_i\in S$ is any Coxeter generator.
\end{theorem}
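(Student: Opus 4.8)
The plan is to determine the index of $W'$ in $W$ by abelianisation, which at once settles the two ``moreover'' assertions, and then to compute $W'$ as an explicit free product via the Reidemeister--Schreier rewriting process. I would start from the standard presentation $W = \langle w_1, \dots, w_n \mid w_i^2,\ (w_iw_j)^{m_{ij}}\ \text{for edges}\ \{i,j\}\rangle$ and pass to $W^{ab}$. There each $\bar w_i$ has order dividing $2$, and since every finite exponent is odd, the relation $m_{ij}(\bar w_i + \bar w_j)=0$ collapses to $\bar w_i = \bar w_j$. Because $\mathcal{V}_{(W,S)}$ is connected, all generators become equal, so $W^{ab}\cong \mathbb{Z}/2\mathbb{Z}$ and $[W:W']=2$. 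The length-parity homomorphism $\epsilon\colon W\to\mathbb{Z}/2\mathbb{Z}$ is onto with $W'\subseteq\ker\epsilon$ and $[W:\ker\epsilon]=2$, forcing $W'=\ker\epsilon$; thus $W'$ is exactly the set of even-length elements. As any generator $w_i$ has $w_i^2=1$ and odd length, $\langle w_i\rangle\cap W'=1$ while $\langle w_i\rangle$ surjects onto $W/W'$, giving $W=W'\rtimes\langle w_i\rangle$ for every $i$.

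For the free-product structure I would run Reidemeister--Schreier on $W'$ with the Schreier transversal $T=\{1,w_1\}$. The nontrivial Schreier generators are $a_i = w_iw_1^{-1}$ and $b_i = w_1w_i$ for $i\neq 1$, together with $b_1 = w_1^2$. Rewriting the relators $w_i^2$ for the coset $1$ produces $b_1=1$ and $b_i=a_i^{-1}$, eliminating all the $b$'s, while the relators arising from the coset $w_1$ reduce to conjugates of these and are redundant. Rewriting each edge relator then yields $a_j^{m_{1j}}=1$ for an edge $\{1,j\}$, and $(a_ia_j^{-1})^{m_{ij}}=1$ for an edge $\{i,j\}$ with $i,j\neq 1$. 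Hence $W'$ is generated by $a_2,\dots,a_n$ subject to exactly these $n-1$ relations.

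The decisive step, and the main obstacle, is to absorb the ``mixed'' relations $(a_ia_j^{-1})^{m_{ij}}$ into single-generator relations, and this is precisely where the tree hypothesis is used. Rooting $\mathcal{V}_{(W,S)}$ at the vertex $1$ and writing $p(v)$ for the parent of a non-root vertex $v$, I would set $c_v = w_vw_{p(v)}$. Telescoping along the unique path $1=v_0,v_1,\dots,v_k=i$ gives $a_i = c_{v_k}\cdots c_{v_1}$, so the $c_v$ form an alternative generating set of the same cardinality $n-1$; and under this substitution every edge relation, whether or not it meets the root, becomes $c_v^{\,m_{v,p(v)}}=1$. Since distinct non-root vertices index distinct edges and each relation involves only one generator, the presentation of $W'$ is
\[
\langle\, c_v\ (v\neq 1) \mid c_v^{\,m_{v,p(v)}}=1 \,\rangle \;\cong\; \mathbb{Z}/m_1\mathbb{Z} \ast \cdots \ast \mathbb{Z}/m_{n-1}\mathbb{Z},
\]
the free product over the multiset of finite exponents, as claimed. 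What must be verified carefully is that no additional relations survive in the Reidemeister--Schreier output and that the change of generators is invertible; both are routine once the redundancy of the coset-$w_1$ relators has been confirmed.

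Finally, I would note a shortcut that bypasses the mixed relations entirely. Since the commutator subgroup is an isomorphism invariant and, by \cite{MuhlherrWeidmann2000, Brady2002}, every group in $\mathcal{TW}$ is isomorphic to one whose graph is a star, it suffices to compute $W'$ for the star-shaped representative. Rooting at the central vertex, every edge is of the form $\{1,j\}$, so the argument above terminates already at the relations $a_j^{m_{1j}}=1$, and the free product $\mathbb{Z}/m_1\mathbb{Z}\ast\cdots\ast\mathbb{Z}/m_{n-1}\mathbb{Z}$ drops out immediately. Either route establishes the theorem, matching the cited \cite[Exercise 9, p.33]{Bourbaki}.
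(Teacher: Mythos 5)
Your proof is correct and takes essentially the route the paper itself indicates: the paper offers no written proof, only a citation to \cite[Exercise 9, p.33]{Bourbaki} together with the remark that the result follows from ``a direct application of Reidemeister--Schreier method,'' and your argument carries out exactly that Reidemeister--Schreier computation (transversal $\{1, w_1\}$, elimination of the $b_i$'s, and the rooted-tree change of generators $c_v = w_vw_{p(v)}$ that correctly absorbs the mixed relations $(a_ia_j^{-1})^{m_{ij}}$), supplemented by the standard abelianisation/length-parity argument for the index-two, even-length and semidirect-product claims. All steps check out, including the ones you flag as needing verification.
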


Since the commutator subgroup of a group is characteristic, each automorphism of the group induces an automorphism of its commutator subgroup. We conclude this section with the following general result.

\begin{proposition}\label{restriciton-to-commutator}
If $W\in \mathcal{TW}$ has rank greater than two, then the restriction map $ \Aut(W) \rightarrow \Aut(W')$ is injective.
\end{proposition}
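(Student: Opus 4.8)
The plan is to show that the kernel of the restriction map is trivial, i.e.\ that any $\phi \in \Aut(W)$ with $\phi|_{W'} = \id$ must be the identity automorphism of $W$. Fix a Coxeter generator $w_1 \in S$. By Theorem \ref{generalised theorem for commutator}, the subgroup $W'$ is precisely the set of even-length elements of $W$ and $W = W' \rtimes \langle w_1 \rangle$, so $W$ is generated by $W'$ together with $w_1$. Since $W'$ is characteristic we have $\phi(W') = W'$, and therefore $\phi$ carries the nontrivial coset $W' w_1$ into itself; thus we may write $\phi(w_1) = u\, w_1$ for a unique $u \in W'$. The whole problem then reduces to proving that $u = 1$.

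To pin down $u$ I would exploit that $\phi$ is a homomorphism fixing $W'$ pointwise. For any $x \in W'$ the element $x^{w_1} = w_1 x w_1$ again lies in $W'$ by normality, so on one hand $\phi\big(x^{w_1}\big) = x^{w_1}$. On the other hand, computing directly,
\[
\phi\big(x^{w_1}\big) = \phi(w_1)^{-1}\,\phi(x)\,\phi(w_1) = (u w_1)^{-1}\, x\, (u w_1) = w_1\,(u^{-1} x u)\,w_1 = (u^{-1} x u)^{w_1}.
\]
Comparing the two expressions and conjugating back by $w_1$ yields $x = u^{-1} x u$ for every $x \in W'$; that is, $u \in \Z(W')$.

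The final step is to observe that $\Z(W') = 1$, and it is here that the hypothesis $\operatorname{rank}(W) > 2$ is essential. Indeed, by Theorem \ref{generalised theorem for commutator} we have $W' \cong \big(\mathbb{Z}/m_1\mathbb{Z}\big) \ast \cdots \ast \big(\mathbb{Z}/m_{n-1}\mathbb{Z}\big)$, a free product of $n-1 \ge 2$ nontrivial cyclic groups, and a free product of at least two nontrivial groups has trivial center. Hence $u = 1$, so $\phi(w_1) = w_1$. As $\phi$ now fixes both the generating set $W'$ (pointwise) and the remaining generator $w_1$ of $W$, we conclude $\phi = \id$, which establishes injectivity of the restriction map.

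I expect the only genuine content to lie in the middle reduction; the centre computation is a standard free-product fact, and the rank hypothesis enters precisely to guarantee that this free product is nonabelian. For rank two one obtains only $W' \cong \mathbb{Z}/m_1\mathbb{Z}$, which is abelian with nontrivial center, and indeed the conclusion fails there (consistently with $W \cong D_{2m}$), so no essential difficulty is hidden beyond correctly isolating where the rank assumption is used.
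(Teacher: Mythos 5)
Your proof is correct, and although it shares the paper's overall skeleton --- both arguments reduce to showing that the discrepancy element $u=\phi(w_1)w_1\in W'$ is central in $W'$ and then use $\Z(W')=1$ for a free product of at least two nontrivial groups --- your execution of the two key steps is genuinely different and leaner. First, the paper establishes $u\in W'$ by a length-parity argument: writing $\phi(w_1)=x$, it invokes Lemma \ref{involutions are conjugate in TW_n} to conclude that the involution $x$ has odd length, so that $xw_1$ has even length and hence lies in $W'$ by Theorem \ref{generalised theorem for commutator}. You get the same membership for free from the coset structure: since $W'$ is characteristic of index two, $\phi$ must preserve the nontrivial coset $W'w_1$. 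Second, the paper proves centrality generator by generator: from $\phi(w_iw_1)=w_iw_1$ it deduces $xw_1\in \C_{W'}(w_1w_i)$ for each $2\le i\le n$, and then needs the specific fact that $\{w_1w_2,\dots,w_1w_n\}$ generates $W'$; your single conjugation identity $\phi\big(x^{w_1}\big)=(u^{-1}xu)^{w_1}$ for all $x\in W'$ yields $u\in\Z(W')$ uniformly, with no appeal to an explicit generating set. The net effect is that your argument uses only $W=W'\rtimes\langle w_1\rangle$ and $\Z(W')=1$, and in fact proves the general statement that if $N$ is a characteristic, centerless subgroup of index two in a group $G$, then any automorphism of $G$ restricting to the identity on $N$ is the identity; the paper's version, by contrast, is tied to the Coxeter generators but makes explicit which elements centralise which generators. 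Your closing observation about rank two is also apt: there $W'$ is cyclic, the center obstruction vanishes, and injectivity indeed fails for $W\cong D_{2m}$.
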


\begin{proof}
Let  $S = \{w_1, w_2, \ldots,w_n\}$, $n \ge 3$, be a Coxeter generating set of $W$. Let $\phi$ be in the kernel of the restriction map $ \Aut(W) \rightarrow \Aut(W')$. Suppose that $\phi(w_1)= x$, an involution in $W$. For each $2 \le i \le n$, since $w_1w_i \in W'$, we have $\phi (w_1w_i) = w_1w_i$, which gives $\phi (w_i) = xw_1w_i$. Since $w_iw_1 = \phi(w_iw_1) = xw_1w_i x$, we obtain $w_1w_i = w_1x(w_1w_i)xw_1 = (w_1w_i)^{xw_1}$. This implies that $xw_1 \in \C_{W}(w_1w_i)$ for each $2 \le i \le n$. Since $x$ is an involution, by Lemma \ref{involutions are conjugate in TW_n}, it must be of odd length. Thus, $xw_1$ is of even length, and hence $xw_1\in W_n'$. Thus, $xw_1 \in \C_{W'}(w_1w_i)$ for each $2 \le i \le n$. By Theorem \ref{generalised theorem for commutator}, the set $\{w_1w_2, w_1w_3, \ldots, w_1w_n \}$ generates $W'$, and hence $xw_1 \in \Z(W') = 1$. This implies that $w_1 =x= \phi(w_1)$, and consequently $\phi (w_i) = w_i$ for all $i$. Hence, $\phi$ must be the trivial automorphism of $W$.
\end{proof}

As a consequence of Lemma \ref{trivial center}, Theorem \ref{generalised theorem for commutator} and Proposition \ref{restriciton-to-commutator}, we obtain the following result.

\begin{corollary}
Let $W \in \mathcal{TW}$ and $(W, S)$ be its odd connected Coxeter system of rank $n \ge 3$ whose multiset of finite exponents is $\big\{m_1,  \dots, m_{n-1}\big\}$. Then there is a faithful representation 
$$W \hookrightarrow \Aut \big( ( \mathbb{Z}/m_1\mathbb{Z}) \ast  \cdots \ast ( \mathbb{Z}/m_{n-1}\mathbb{Z} ) \big).$$
\end{corollary}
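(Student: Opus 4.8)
The plan is to construct the embedding as a composition of two injective homomorphisms and then identify the target group, so that the entire argument is essentially a concatenation of the three results cited just above. First I would exploit the triviality of the center: since $W \in \mathcal{W}$, Lemma \ref{trivial center} gives $\Z(W) = 1$, and therefore the natural map $W \to \Inn(W)$, $w \mapsto \widehat{w}$, has trivial kernel and is an isomorphism onto $\Inn(W)$. Composing with the inclusion $\Inn(W) \hookrightarrow \Aut(W)$ produces a faithful representation $W \hookrightarrow \Aut(W)$.

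Next I would pass from $W$ to its commutator subgroup. Because $W'$ is characteristic in $W$, every automorphism of $W$ restricts to one of $W'$, giving the restriction homomorphism $\Aut(W) \to \Aut(W')$. As $n \ge 3$, Proposition \ref{restriciton-to-commutator} asserts that this restriction map is injective. Composing the two arrows, the homomorphism $W \to \Aut(W) \to \Aut(W')$ is injective, being a composite of injective homomorphisms.

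It remains only to identify the target. By Theorem \ref{generalised theorem for commutator}, there is an isomorphism $W' \cong (\mathbb{Z}/m_1\mathbb{Z}) \ast \cdots \ast (\mathbb{Z}/m_{n-1}\mathbb{Z})$, which induces an isomorphism $\Aut(W') \cong \Aut\big((\mathbb{Z}/m_1\mathbb{Z}) \ast \cdots \ast (\mathbb{Z}/m_{n-1}\mathbb{Z})\big)$. Transporting the faithful representation $W \hookrightarrow \Aut(W')$ across this isomorphism yields the desired embedding. I would also record that, unwinding the construction, the composite sends $w \in W$ to the automorphism of $W'$ given by conjugation $x \mapsto w x w^{-1}$, which makes the representation completely explicit.

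Since the substantive content is already packaged in Lemma \ref{trivial center}, Proposition \ref{restriciton-to-commutator}, and Theorem \ref{generalised theorem for commutator}, there is no genuine obstacle in this last step; it is a bookkeeping argument. The only points requiring care are to confirm that each arrow in the chain $W \hookrightarrow \Inn(W) \hookrightarrow \Aut(W) \to \Aut(W')$ is injective and that composing injective homomorphisms preserves injectivity, both of which are immediate.
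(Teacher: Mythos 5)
Your proof is correct and matches the paper's intended argument exactly: the paper states the corollary as an immediate consequence of Lemma \ref{trivial center}, Theorem \ref{generalised theorem for commutator} and Proposition \ref{restriciton-to-commutator}, which is precisely the chain $W \cong \Inn(W) \hookrightarrow \Aut(W) \hookrightarrow \Aut(W')$ followed by the identification of $W'$ as the free product, just as you wrote. Your explicit description of the image as conjugation on $W'$ is a nice addition, though not required.
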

\medskip


\section{Automorphisms of groups in $\mathcal{TW}$}\label{automorphism section}

In this section, we compute the automorphism group of groups in  the family $\mathcal{TW}$. This together with a solution of Problem \ref{prob1} due to \cite[p.1]{MuhlherrWeidmann2000} and \cite[Theorem 5.4]{Brady2002}  provides a  solution of Problem \ref{prob2} for this family. By \cite{MuhlherrWeidmann2000, Brady2002} two groups in the family $\mathcal{TW}$ are isomorphic if and only if they have the same rank and the same multiset of finite exponents. Thus, it is enough to compute the automorphism group of any group in an isomorphism class. The full automorphism group and the inner automorphism group will be the same for each group  in an isomorphism class, but the group of graph automorphisms may change depending on the graph. In view of Lemma \ref{graph-intersection-inner}, the larger the group of graph automorphisms, the simpler it is to compute the full automorphism group.

\subsection{Computation of the automorphism group} 
We choose an appropriate group in the family $\mathcal{TW}$ for computing the automorphism group. Since rank two Coxeter groups are dihedral groups whose automorphism groups are well-known, we can assume that our group has rank  greater than two. Consider the group  $\mathbb{W}$ with the following presentation
\begin{equation}\label{star-group}
\mathbb{W} = \big\langle w_1, w_2, \ldots,  w_n \mid w_j^2 = 1 = (w_1w_i)^{t_i}, ~1\leq j\leq n,\; 2\leq i\leq n \big\rangle,
\end{equation}
where 
\begin{equation*}
t_i = 
\begin{cases}
m_1 & \text{if}\ 2\leq i \leq k_1+1,\\
m_2 & \text{if}\ k_1+2\leq i \leq k_1+k_2+1,\\
\vdots \\
m_l & \text{if}\ k_1+k_2+\cdots +k_{l-1}+2\leq i \leq k_1+k_2+\cdots +k_l+1=n.
\end{cases}
\end{equation*}

Setting $\mathbb{S}= \{w_1, w_2, \ldots,  w_n \}$, we see that the graph $\mathcal{V}_{(\mathbb{W}, \mathbb{S})}$ is the star shaped graph with $n$ vertices as in Figure 4.
\begin{center}
\begin{tikzpicture}
\draw[fill=black] (3,0) circle (4pt);
\draw[fill=black] (6,0) circle (4pt);
\draw[fill=black] (5,2) circle (4pt);
\draw[fill=black] (5,-2) circle (4pt);
\draw[fill=black] (3,-3) circle (4pt);
\draw[fill=black] (1,-2) circle (4pt);
\draw[fill=black] (3,3) circle (4pt);
\node at (2.6, 0) {1};
\node at (5.7,-0.3) {2};
\node at (4.8,-2.3) {3};
\node at (2.7,-3.3) {4};
\node at (0.7,-2.2) {5};
\node at (2.7,3.3) {n-1};
\node at (4.8,2.3) {n};
\node at (0.5,-0.5) {\textbf{.}};
\node at (0.9, 0.8) {\textbf{.}};
\node at (1.8, 2) {\textbf{.}};
\node at (4.5,0.3) {$t_2$};
\node at (4.4,-1) {$t_3$};
\node at (3.3,-1.5) {$t_4$};
\node at (1.7,-1) {$t_5$};
\node at (2.6,1.5) {$t_{n-1}$};
\node at (3.8,1.2) {$t_n$};
\draw[thick] (6,0) -- (3,0) -- (3,3); 
\draw[thick] (5,2) -- (3,0)-- (5,-2);
\draw[thick] (1,-2) -- (3,0)-- (3,-3);
\end{tikzpicture}
Figure  4.
\end{center}

 Divide the set $\{2, 3, \ldots, n\}$ into $l$ mutually disjoint subsets as follows
\begin{equation*}
A_i = 
\begin{cases}
\{2, 3, \ldots, k_1+1\} & \text{if}\ i = 1,\\
\{k_1+2, k_1+3, \ldots, k_1+k_2+1\}  & \text{if}\ i = 2,\\
\vdots \\
\{k_1+k_2+\cdots +k_{l-1}+2, k_1+k_2+\cdots +k_{l-1}+3, \ldots, k_1+k_2+\cdots +k_l+1\}  & \text{if}\ i = l.
\end{cases}
\end{equation*}

By Corollary \ref{trivial center}, it follows that $\Inn(\mathbb{W}) \cong \mathbb{W}$. The following result characterises $\Aut \big(\Gamma_{(\mathbb{W}, \mathbb{S})}\big)$.

\begin{lemma}
$\Aut\big(\Gamma_{(\mathbb{W}, \mathbb{S})}\big) \cong S_{k_1}\times S_{k_2}\times \cdots \times S_{k_l}$.
\end{lemma}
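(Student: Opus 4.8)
The plan is to identify $\Aut\big(\Gamma_{(\mathbb{W}, \mathbb{S})}\big)$ with the group of label-preserving automorphisms of the star shaped graph $\Gamma_{(\mathbb{W}, \mathbb{S})}$ and show directly that these are exactly the permutations preserving the exponent-blocks $A_1, \dots, A_l$. First I would recall that $\Gamma_{(\mathbb{W}, \mathbb{S})}$, being the graph of an odd Coxeter group, is the \emph{complete} graph on the vertex set $\{1, 2, \ldots, n\}$, where the edge between $1$ and $i$ carries the finite label $t_i$ and every edge not incident to the central vertex $1$ carries the label $\infty$ (equivalently, $m_{ij} = \infty$ for $i, j \neq 1$, since $\mathcal{V}_{(\mathbb{W}, \mathbb{S})}$ is the star). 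A graph automorphism is a permutation $\alpha$ of $\{1, \ldots, n\}$ that preserves the edge labels, i.e. $m_{\alpha(i)\alpha(j)} = m_{ij}$ for all $i, j$.

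The key structural observation is that the central vertex $1$ is the unique vertex all of whose incident edges carry finite labels; every other vertex $i$ has exactly one finite-labeled incident edge (namely to $1$) and all its remaining incident edges labeled $\infty$. Hence any label-preserving permutation $\alpha$ must fix the vertex $1$, since its image must again be a vertex incident only to finite labels. I would phrase this carefully: for $n \geq 3$ (which we may assume since rank is greater than two), vertex $1$ is characterised among all vertices by having at least two finitely-labeled incident edges, so $\alpha(1) = 1$. Consequently $\alpha$ restricts to a permutation of the leaves $\{2, \ldots, n\}$, and the label-preservation condition $t_{\alpha(i)} = t_i$ forces $\alpha$ to map each block $A_r = \{\, i : t_i = m_r \,\}$ into itself (using that the $m_r$ are distinct). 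Conversely, any permutation fixing $1$ and preserving each block $A_r$ clearly preserves all labels, since within a block all edges to the center share the same label and all non-central edges are labeled $\infty$.

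This establishes a bijection between $\Aut\big(\Gamma_{(\mathbb{W}, \mathbb{S})}\big)$ and the direct product of the symmetric groups on the blocks $A_1, \ldots, A_l$, which is plainly a group isomorphism since composition of permutations corresponds to componentwise composition. As $|A_r| = k_r$, we get
$$\Aut\big(\Gamma_{(\mathbb{W}, \mathbb{S})}\big) \cong S_{k_1} \times S_{k_2} \times \cdots \times S_{k_l},$$
as desired.

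I do not anticipate a serious obstacle here; the argument is essentially combinatorial. The one point requiring care is the uniqueness characterisation of the central vertex, which relies on $n \geq 3$: in rank two the graph has a single edge and the two vertices are indistinguishable, but that case is excluded since we assumed rank greater than two (and rank-two groups are dihedral, handled separately). The mild subtlety worth stating explicitly is that the blocks $A_r$ are determined by the \emph{distinct} values $m_1, \ldots, m_l$, so a priori two different blocks could share a common exponent value; here the setup lists $m_1, \ldots, m_l$ as the distinct finite exponents with respective multiplicities $k_1, \ldots, k_l$, so label-preservation genuinely confines $\alpha$ to permute within each $A_r$ and forbids mixing between blocks of equal size but different labels.
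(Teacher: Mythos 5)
Your proof is correct and takes essentially the same approach as the paper: every label-preserving permutation must fix the central vertex, the edge labels then force each block $A_r$ to be mapped to itself, and the converse is immediate, giving $S_{k_1}\times\cdots\times S_{k_l}$. The only cosmetic difference is that the paper first identifies $\Aut\big(\Gamma_{(\mathbb{W}, \mathbb{S})}\big)$ with $\Aut\big(\mathcal{V}_{(\mathbb{W}, \mathbb{S})}\big)$ and argues on the star-shaped tree, whereas you argue directly on the complete graph via the finite/infinite label dichotomy; your explicit attention to the hypotheses $n\geq 3$ and the distinctness of $m_1,\ldots,m_l$ is sound and, if anything, slightly more careful than the paper's own write-up.
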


\begin{proof}
First note that the graph  $\mathcal{V}_{(\mathbb{W}, \mathbb{S})}$  is a subgraph of the complete graph $\Gamma_{(\mathbb{W}, \mathbb{S})}$. Further, any automorphism of $\Gamma_{(\mathbb{W}, \mathbb{S})}$ gives a unique automorphism of $\mathcal{V}_{(\mathbb{W}, \mathbb{S})}$, and any automorphism of $\mathcal{V}_{(\mathbb{W}, \mathbb{S})}$ extends uniquely to an automorphism of $\Gamma_{(\mathbb{W}, \mathbb{S})}$. Thus, $\Aut\big(\Gamma_{(\mathbb{W}, \mathbb{S})}\big)\cong \Aut\big(\mathcal{V}_{(\mathbb{W}, \mathbb{S})}\big)$.
\par
We see that any permutation of $\Pi = \{1, 2, \ldots, n\}$ which extends to an automorphism of $\mathcal{V}_{(\mathbb{W}, \mathbb{S})}$ necessarily fixes the vertex $1$. Due to the labellings on the edges, it follows that a permutation of $\Pi$ which takes an element of $A_i$ to an element of $A_j$ for $i\neq j$ cannot be extended to an automorphism of the graph $\mathcal{V}_{(\mathbb{W}, S)}$. Thus $\Aut(\Gamma_{(\mathbb{W}, \mathbb{S})})$ can be seen as a subgroup of $S_{k_1}\times S_{k_2}\times \cdots \times S_{k_l}$.  Conversely, any permutation of $\Pi$ that keeps each $A_i$ invariant extends to an automorphism of the graph $\mathcal{V}_{(\mathbb{W}, \mathbb{S})}$. This completes the proof of the lemma.
\end{proof}

\begin{remark}
 Let $\alpha\in S_{k_1}\times S_{k_2}\times \cdots \times S_{k_l}$ be a permutation. Then $\alpha$ can be written as $\alpha = \alpha_1 \alpha_2\ldots \alpha_l$, where $\alpha_j \in S_{k_j}$ is a permutation of the set $A_j$ for each $j$. The induced automorphism $\widetilde{\alpha}$ of $\mathbb{W}$ is given on generators  by
\begin{equation}
\widetilde{\alpha}(w_i) = 
\begin{cases}
w_1 & \text{if}\ i=1,\\
w_{\alpha(i)} & \text{if}\ 2\leq i \leq n,
\end{cases}
=
\begin{cases}
w_1 & \text{if}\ i=1,\\
w_{\alpha_j(i)} & \text{if}\ i \in A_j.
\end{cases}
\end{equation}
\end{remark}

Maximal finite parabolic subgroups play an important role in determining the structure of the automorphism group of a Coxeter group. Note that the only maximal finite standard parabolic subgroups of $\mathbb{W}$ (which is clear from the graph  $\mathcal{V}_{(\mathbb{W}, \mathbb{S})}$) are given by
$$\mathbb{W}_i = \langle w_1, w_i\rangle \cong D_{2t_i},$$ 
where $2\leq i \leq n$. In fact, $\mathbb{W}$ is the amalgamated free product of $\mathbb{W}_i$'s over the subgroup $\langle w_1\rangle$. For each integer $2\leq i \leq n$ and $1\leq k< t_i$ with $\gcd (k, t_i)=1$, we define $\theta_i^k$ on the generators of $\mathbb{W}$ as
\begin{equation*}
\theta_i^k(w_j) = 
\begin{cases}
w_j & \text{if}\ j\neq i,\\
w_1(w_1w_i)^k & \text{if}\ j=i.
\end{cases}
\end{equation*}

It is easy to check that $\theta_i^k$ extends to an automorphism of $\mathbb{W}$, and the set
$$C_i = \big\langle \theta_i^k \mid 1\leq k < t_i,\; \gcd(k, t_i) =1 \big\rangle$$ 
is a subgroup of $\Aut(\mathbb{W})$. Note that for each $2\leq i \leq n$, $$C_i \cong \U_{t_i},$$
the multiplicative group of integers mod $t_i$ that are coprime to $t_i$. Setting 
\begin{equation}\label{abelian-aut}
C = C_2  \times C_3 \times \cdots \times C_n,
\end{equation}
we see that 
\begin{equation}\label{C-product-U}
C \cong \prod_{i=1}^{l} (\U_{m_i})^{k_i}
\end{equation}
 is an abelian subgroup of $\Aut(\mathbb{W})$. The following result is an easy observation.

\begin{lemma}\label{invariance-Wi}
Let $\theta \in \Aut(\mathbb{W})$ be an automorphism. Then $\theta \in C$ if and only if $\theta(\mathbb{W}_i) = \mathbb{W}_i$ for each $2\leq i \leq n$.
\end{lemma}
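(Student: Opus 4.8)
The plan is to prove the biconditional in Lemma \ref{invariance-Wi} by establishing each implication separately, with the forward direction being essentially immediate from the definitions and the reverse direction requiring the real work.

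\textbf{The easy direction.} First I would verify that $\theta \in C$ implies $\theta(\mathbb{W}_i) = \mathbb{W}_i$ for every $2 \le i \le n$. Since $C = C_2 \times C_3 \times \cdots \times C_n$, it suffices by multiplicativity to check this for each generator $\theta_j^k$ of each factor $C_j$. By the defining formula, $\theta_j^k$ fixes $w_1$ and fixes every $w_i$ with $i \ne j$, while sending $w_j$ to $w_1(w_1w_j)^k$, which clearly lies in $\mathbb{W}_j = \langle w_1, w_j\rangle$. Hence for $i \ne j$ the subgroup $\mathbb{W}_i = \langle w_1, w_i\rangle$ is fixed pointwise on generators, so $\theta_j^k(\mathbb{W}_i) = \mathbb{W}_i$; and for $i = j$ we have $\theta_j^k(w_1) = w_1$ and $\theta_j^k(w_j) = w_1(w_1w_j)^k \in \mathbb{W}_j$, with $w_1 w_j$ having multiplicative order exactly $t_j$ in the dihedral group $\mathbb{W}_j$, so that (because $\gcd(k,t_j)=1$) the image is a reflection generating $\mathbb{W}_j$ together with $w_1$. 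Thus $\theta_j^k(\mathbb{W}_j) = \mathbb{W}_j$, and the invariance passes to all products.

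\textbf{The reverse direction.} This is where the content lies. Suppose $\theta \in \Aut(\mathbb{W})$ satisfies $\theta(\mathbb{W}_i) = \mathbb{W}_i$ for all $i$. I want to show $\theta$ agrees with an element of $C$. Since $\theta$ preserves each dihedral $\mathbb{W}_i \cong D_{2t_i}$ setwise, it restricts to an automorphism of $D_{2t_i}$. First I would pin down the image of $w_1$: because $w_1 = \bigcap_{i=2}^n \mathbb{W}_i$ is the unique common element of the generating parabolics (as $\mathbb{W}$ is their amalgam over $\langle w_1\rangle$), and $\theta$ is a special automorphism fixing the conjugacy class of involutions, I expect $\theta(w_1) = w_1$. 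Concretely, $w_1$ is the one nontrivial element lying in $\mathbb{W}_i$ for all $i$ simultaneously; $\theta$ must send $\langle w_1\rangle = \bigcap_i \mathbb{W}_i$ to itself, forcing $\theta(w_1) = w_1$. Then within each $\mathbb{W}_i$, an automorphism of $D_{2t_i}$ fixing the reflection $w_1$ must send the rotation $w_1 w_i$ to some power $(w_1 w_i)^k$ with $\gcd(k,t_i)=1$, hence $\theta(w_i) = w_1 (w_1 w_i)^k = \theta_i^k(w_i)$. Collecting these exponents $k$ over all $i$ produces exactly the element $\prod_i \theta_i^{k_i} \in C$ agreeing with $\theta$ on every generator, and since generators determine the automorphism, $\theta \in C$.

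\textbf{Main obstacle.} The step I expect to demand the most care is justifying $\theta(w_1) = w_1$ rigorously, i.e.\ that $w_1$ is genuinely characterized as $\bigcap_{i=2}^n \mathbb{W}_i$ and that this intersection is the amalgamating subgroup $\langle w_1\rangle$. This follows from the amalgamated free product structure of $\mathbb{W}$ over $\langle w_1\rangle$ noted just before the lemma, since in an amalgam the intersection of the factors is precisely the amalgamating subgroup. One must also rule out that $\theta$ could swap or permute the roles of the $w_1$-reflection with the other reflections inside some $\mathbb{W}_i$; here the hypothesis that \emph{every} $\mathbb{W}_i$ is preserved, combined with $w_1$ being their common element, removes that ambiguity. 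Once $\theta(w_1)=w_1$ is secured, the remaining dihedral automorphism analysis is routine.
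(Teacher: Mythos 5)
Your proof is correct and follows essentially the same route as the paper: both pin down $\theta(w_1)=w_1$ via $\theta(w_1)\in\bigcap_{i=2}^{n}\mathbb{W}_i=\langle w_1\rangle$, then identify $\theta(w_i)=w_1(w_1w_i)^{k_i}$ with $\gcd(k_i,t_i)=1$ inside each dihedral $\mathbb{W}_i$ and conclude $\theta=\prod_{i=2}^{n}\theta_i^{k_i}\in C$. The only cosmetic difference is that you derive the coprimality from the characteristic rotation subgroup of $D_{2t_i}$, while the paper lists the involutions of $\mathbb{W}_i$ and rules out $\gcd(k_i,t_i)=d>1$ by an order computation on $\theta(w_1w_i)$.
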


\begin{proof}
If $\theta \in C$, then by construction of $C$, it follows that $\theta(\mathbb{W}_i) = \mathbb{W}_i$ for each $2\leq i \leq n$. Conversely suppose that $\theta(\mathbb{W}_i) = \mathbb{W}_i$ for each $2\leq i \leq n$. Then $$\theta(w_1) \in \bigcap^n_{i=2} \theta(\mathbb{W}_i) = \bigcap^n_{i=2} \mathbb{W}_i = \langle w_1 \rangle,$$
which gives $\theta(w_1) = w_1.$ Fix an integer $2\leq i \leq n$. Note that all the involutions in $\mathbb{W}_i$ are of the form $w_1(w_1w_i)^k$ for some $1\leq k \leq t_i$.  Since automorphisms preserve orders, $\theta(w_i)$ is an involution in $W_i$.Thus, we have $\theta(w_i) = w_1(w_1w_i)^{k_i}$ for some $1\leq k_i \leq t_i$. We claim that $\gcd(k_i, t_i)=1$. If not, say $\gcd(k_i, t_i)= d >1$, then $$\big(\theta(w_1w_i)\big)^{t_i/d}=(w_1w_i)^{k_i(t_i/d)} = \big((w_1w_i)^{t_i}\big)^{k_i/d} =1,$$
which contradicts the fact that $\theta$ is an automorphism. Thus,  $\theta = \prod_{i=2}^n \theta_i^{k_i} \in C$, where each $\gcd(k_i, t_i)=1$, and the proof is complete.
\end{proof}

\begin{lemma}\label{parabolic-subgroups-notconjugate}
$\mathbb{W}_i$ is not conjugate to $\mathbb{W}_j$ for $i\neq j$.
\end{lemma}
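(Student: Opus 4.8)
The plan is to split into two cases according to whether the exponents $t_i$ and $t_j$ agree. If $t_i \neq t_j$, then $\mathbb{W}_i \cong D_{2t_i}$ and $\mathbb{W}_j \cong D_{2t_j}$ have different orders; since conjugation preserves order, they cannot be conjugate, and the claim is immediate. So the substantive case is $t_i = t_j$ (equivalently, $i$ and $j$ lie in the same block $A_r$), where the two dihedral groups are abstractly isomorphic and a finer invariant is needed.

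For this remaining case I would exploit the normal subgroup $\mathbb{W}'$ together with its free-product decomposition from Theorem \ref{generalised theorem for commutator}. The key observation is that $\mathbb{W}_i \cap \mathbb{W}' = \langle w_1 w_i \rangle \cong \mathbb{Z}/t_i\mathbb{Z}$ is exactly the cyclic rotation subgroup of the dihedral group $\mathbb{W}_i$, and Theorem \ref{generalised theorem for commutator} identifies these subgroups, as $i$ ranges over $2 \le i \le n$, with the free factors of $\mathbb{W}' \cong \langle w_1 w_2\rangle * \cdots * \langle w_1 w_n\rangle$.

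Suppose toward a contradiction that $g\, \mathbb{W}_i\, g^{-1} = \mathbb{W}_j$ for some $g \in \mathbb{W}$. Since $\mathbb{W}'$ is characteristic, intersecting both sides with $\mathbb{W}'$ gives $g\, \langle w_1 w_i\rangle\, g^{-1} = \langle w_1 w_j\rangle$. I would then reduce the conjugator into $\mathbb{W}'$: writing $g = h$ or $g = h w_1$ with $h \in \mathbb{W}'$, and using the identity $w_1 (w_1 w_i) w_1 = w_i w_1 = (w_1 w_i)^{-1}$, one may replace $g$ by $h \in \mathbb{W}'$, so that $\langle w_1 w_j\rangle = h\, \langle w_1 w_i\rangle\, h^{-1}$ inside $\mathbb{W}'$. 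In particular, the generator $w_1 w_j$ of one free factor becomes conjugate in $\mathbb{W}'$ to a nontrivial power of $w_1 w_i$, that is, to a nontrivial element of a different free factor.

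The \emph{crux}, and the only nonformal input, is the standard conjugacy criterion for free products: a nontrivial element of one free factor is never conjugate to an element of a different free factor, as one sees by comparing cyclically reduced normal forms. Applying this with $i \neq j$ yields the desired contradiction and completes the case $t_i = t_j$. I expect this classical step to be the only one requiring care to state precisely; alternatively, the same conclusion can be extracted from the Bass--Serre tree of the amalgamated decomposition of $\mathbb{W}$ over $\langle w_1\rangle$, by showing that each $\mathbb{W}_i$ fixes a unique vertex, so that conjugate vertex groups must lie in the same $\mathbb{W}$-orbit of vertices, which is impossible for $i \neq j$.
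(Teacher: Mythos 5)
Your proof is correct, but it takes a genuinely different route from the paper. The paper's argument is a short centralizer computation: if $\mathbb{W}_i = (\mathbb{W}_j)^w$, then since all involutions of $\mathbb{W}_j$ are conjugate to $w_1$ (Lemma \ref{involutions are conjugate in TW_n}) one finds $x \in \mathbb{W}_j$ with $xw \in \C_{\mathbb{W}}(w_1) = \langle w_1\rangle$ (Lemma \ref{centralisers of involutions}), forcing $w \in \mathbb{W}_j$, hence $\mathbb{W}_i = \mathbb{W}_j$ and $i = j$ — no case split on $t_i$ versus $t_j$ is needed. You instead pass to the commutator subgroup: intersecting with the normal subgroup $\mathbb{W}'$ turns the hypothetical conjugacy of dihedral parabolics into a conjugacy between the rotation subgroups $\langle w_1 w_i\rangle$ and $\langle w_1 w_j\rangle$, you correctly reduce the conjugator into $\mathbb{W}'$ via $w_1(w_1w_i)w_1 = (w_1w_i)^{-1}$, and then invoke the classical fact that nontrivial elements of distinct free factors are never conjugate in a free product (your Bass--Serre variant, using that each finite $\mathbb{W}_i$ with $|\mathbb{W}_i| = 2t_i > 2$ fixes a unique vertex of the tree for the amalgam over $\langle w_1\rangle$, is equally valid). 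Your steps all check out, including $\mathbb{W}_i \cap \mathbb{W}' = \langle w_1 w_i\rangle$, which follows from the even-length characterization of $\mathbb{W}'$ in Theorem \ref{generalised theorem for commutator}. One small citation caveat: Theorem \ref{generalised theorem for commutator} as stated gives only an \emph{abstract} isomorphism of $\mathbb{W}'$ with a free product of cyclic groups, whereas you need the concrete decomposition with free factors precisely $\langle w_1 w_2\rangle, \ldots, \langle w_1 w_n\rangle$; this is true for the star-shaped presentation and follows from the same Reidemeister--Schreier computation the paper alludes to, but it should be stated and justified rather than attributed to the theorem as written. In exchange, your argument avoids Lemma \ref{centralisers of involutions} (whose proof rests on Brink's centralizer theorem and Bourbaki's finite-parabolic results), so it is more self-contained from the free-product side and would generalize to settings where centralizers of involutions are larger, at the cost of being longer than the paper's three-line proof.
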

\begin{proof}
Suppose that $\mathbb{W}_i = (\mathbb{W}_{j})^w$ for some $w\in \mathbb{W}$. Since all the involutions in $\mathbb{W}_j$ are conjugate to $w_1$, there exists $x\in \mathbb{W}_j$ such that $w_1 = w_1^{xw}$. Then, by Lemma \ref{centralisers of involutions}, we obtain $xw \in \{1, w_1\}$, which gives $w\in \mathbb{W}_j$. Thus, $\mathbb{W}_i = \mathbb{W}_j$, which is true if and only if $i=j$.
\end{proof} 

\begin{lemma}\label{C-intersection-w-hat}
Let $C$ be as in \eqref{abelian-aut}. Then 
$$\Inn(\mathbb{W}) \cap C= \langle \widehat{w_1} \rangle,$$
where $\widehat{w_1}$ is the inner automorphism induced by $w_1$.
\end{lemma}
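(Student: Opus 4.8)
The plan is to establish the two inclusions separately; the inclusion $\langle \widehat{w_1}\rangle \subseteq \Inn(\mathbb{W})\cap C$ is the computational half, and I would prove it by evaluating $\widehat{w_1}$ on the Coxeter generators. Since $w_1$ is an involution the map $\widehat{w_1}$ is $g \mapsto w_1 g w_1$, so $\widehat{w_1}(w_1)=w_1$, while for $2\le i\le n$,
\[
\widehat{w_1}(w_i) = w_1 w_i w_1 = w_1 (w_1 w_i)^{-1} = w_1 (w_1 w_i)^{t_i-1},
\]
using $(w_1 w_i)^{t_i}=1$. Comparing with the definition of $\theta_i^k$, this shows $\widehat{w_1} = \prod_{i=2}^n \theta_i^{\,t_i-1}$. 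Since $\gcd(t_i-1,t_i)=1$, each factor genuinely lies in $C_i$, so $\widehat{w_1}\in C$; it is of course inner, whence $\langle \widehat{w_1}\rangle \subseteq \Inn(\mathbb{W})\cap C$.

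For the reverse inclusion I would take an arbitrary $\widehat{w}\in \Inn(\mathbb{W})\cap C$ and show $w\in\langle w_1\rangle$. Since $\widehat{w}\in C$, Lemma \ref{invariance-Wi} gives $\widehat{w}(\mathbb{W}_i)=\mathbb{W}_i$ for every $2\le i\le n$, and exactly as in the proof of that lemma this forces $\widehat{w}(w_1)=w_1$ because $\bigcap_{i=2}^n \mathbb{W}_i = \langle w_1\rangle$. Thus $w w_1 w^{-1}=w_1$, i.e.\ $w\in\C_{\mathbb{W}}(w_1)$, and Lemma \ref{centralisers of involutions} identifies $\C_{\mathbb{W}}(w_1)=\langle w_1\rangle$. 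Hence $w\in\{1,w_1\}$ and $\widehat{w}\in\{\mathrm{id},\widehat{w_1}\}=\langle \widehat{w_1}\rangle$, completing the equality.

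Finally, to be certain that $\langle \widehat{w_1}\rangle$ is genuinely the two-element group rather than trivial, I would record that $\widehat{w_1}^{\,2}=\widehat{w_1^2}=\mathrm{id}$ since $w_1^2=1$, while $\widehat{w_1}\neq\mathrm{id}$ because $\Z(\mathbb{W})=1$ by Lemma \ref{trivial center}. I do not expect a serious obstacle in this argument: the only point requiring real care is the explicit decomposition $\widehat{w_1}=\prod_{i=2}^n \theta_i^{\,t_i-1}$ together with the coprimality check $\gcd(t_i-1,t_i)=1$ that actually places it inside $C$; once Lemmas \ref{invariance-Wi} and \ref{centralisers of involutions} are invoked, the reverse inclusion is essentially immediate.
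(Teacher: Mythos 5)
Your proof is correct and follows essentially the same route as the paper: the reverse inclusion via the fact that elements of $C$ fix $w_1$ combined with $\C_{\mathbb{W}}(w_1)=\langle w_1\rangle$ (Lemma \ref{centralisers of involutions}), and the forward inclusion via the decomposition $\widehat{w_1}=\prod_{i=2}^{n}\theta_i^{t_i-1}$, which is exactly the ``direct check'' the paper leaves implicit. Your detour through Lemma \ref{invariance-Wi} to get $\widehat{w}(w_1)=w_1$ is slightly more roundabout than the paper's direct appeal to the definition of $C$ (every generator $\theta_i^k$ fixes $w_1$), but it is equally valid.
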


\begin{proof}
Let $\theta \in \Inn(\mathbb{W}) \cap C$. Then $\theta = \widehat{w}$ for some $w\in \mathbb{W}$. Further, $\theta \in C$ implies that $\theta(w_1) = w_1$, that is, $w w_1=w_1 w$. By Lemma \ref{centralisers of involutions}, we have $w \in \langle w_1 \rangle$, and hence $\Inn(\mathbb{W}) \cap C \leq \langle \widehat{w_1} \rangle$. Conversely, a direct check shows that $\widehat{w_1} = \prod_{i=2}^{n} \theta_i^{t_i-1}\in C\cap \Inn(\mathbb{W})$.
\end{proof}

In view of the preceding lemma, under the isomorphism \eqref{C-product-U}, we identify $\widehat{w_1}\in C$ with $-1 \in \prod_{i=1}^{l} (\U_{m_i})^{k_i}$. We are now ready to prove the main theorem of this section.

\begin{theorem}\label{splitting-by-graph-auto}
Let $\mathbb{W}$ be the group with presentation \ref{star-group}. Then
$$\Aut(\mathbb{W}) = \big ( \Inn(\mathbb{W}) C \big ) \rtimes \Aut \big(\Gamma_{(\mathbb{W}, \mathbb{S})}\big).$$
\end{theorem}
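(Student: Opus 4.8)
The plan is to show that every automorphism of $\mathbb{W}$ factors as a graph automorphism followed by an element of $\Inn(\mathbb{W})C$, and then to check that $\Inn(\mathbb{W})C$ is normal and meets $\Aut\big(\Gamma_{(\mathbb{W}, \mathbb{S})}\big)$ trivially. The engine is the observation that the $\mathbb{W}_i = \langle w_1, w_i\rangle$ are exactly the maximal finite subgroups of $\mathbb{W}$ up to conjugacy: by \cite[Exercise 2d, p.130]{Bourbaki1968} every finite subgroup lies in a finite parabolic, and the maximal finite standard parabolics are precisely the $\mathbb{W}_i \cong D_{2t_i}$. Hence any $\phi \in \Aut(\mathbb{W})$ permutes the conjugacy classes $[\mathbb{W}_2], \ldots, [\mathbb{W}_n]$, which are pairwise distinct by Lemma \ref{parabolic-subgroups-notconjugate}.

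First I would extract the graph part. Since $\phi$ preserves isomorphism type, $\phi(\mathbb{W}_i)$ is conjugate to $\mathbb{W}_{\sigma(i)}$ for a permutation $\sigma$ of $\{2,\ldots,n\}$ with $t_{\sigma(i)} = t_i$; thus $\sigma$ preserves the blocks $A_j$ and defines $\widetilde{\sigma} \in \Aut\big(\Gamma_{(\mathbb{W}, \mathbb{S})}\big) \cong S_{k_1}\times\cdots\times S_{k_l}$. Replacing $\phi$ by $\widetilde{\sigma}^{-1}\phi$, I may assume $\phi(\mathbb{W}_i)$ is conjugate to $\mathbb{W}_i$ for every $i$. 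Next, as $\phi(w_1)$ is an involution and all involutions are conjugate (Lemma \ref{involutions are conjugate in TW_n}), composing with a suitable inner automorphism lets me further assume $\phi(w_1) = w_1$; this preserves the previous normalization since inner automorphisms fix conjugacy classes.

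The crux is then to upgrade ``conjugate to $\mathbb{W}_i$'' to ``equal to $\mathbb{W}_i$''. Writing $\phi(\mathbb{W}_i) = g_i^{-1}\mathbb{W}_i g_i$, the relation $w_1 = \phi(w_1) \in \phi(\mathbb{W}_i)$ gives $g_i w_1 g_i^{-1} \in \mathbb{W}_i$. Since $t_i$ is odd, all involutions of $\mathbb{W}_i \cong D_{2t_i}$ are conjugate within $\mathbb{W}_i$, so $g_i w_1 g_i^{-1} = x_i w_1 x_i^{-1}$ for some $x_i \in \mathbb{W}_i$; then $x_i^{-1}g_i$ centralizes $w_1$, and Lemma \ref{centralisers of involutions} forces $x_i^{-1}g_i \in \langle w_1\rangle \subseteq \mathbb{W}_i$, whence $g_i \in \mathbb{W}_i$ and $\phi(\mathbb{W}_i) = \mathbb{W}_i$. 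Applying this for every $i$, Lemma \ref{invariance-Wi} gives $\phi \in C$, and unwinding the reductions yields $\Aut(\mathbb{W}) = \Aut\big(\Gamma_{(\mathbb{W}, \mathbb{S})}\big)\cdot\Inn(\mathbb{W})\cdot C$.

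Finally I would assemble the semidirect product. The set $\Inn(\mathbb{W})C$ is a subgroup because $\Inn(\mathbb{W})$ is normal, and it is normalized by $\Aut\big(\Gamma_{(\mathbb{W}, \mathbb{S})}\big)$: a direct check gives $\widetilde{\sigma}\,\theta_i^k\,\widetilde{\sigma}^{-1} = \theta_{\sigma(i)}^k$, so $\widetilde{\sigma}C\widetilde{\sigma}^{-1} = C$, while $\Inn(\mathbb{W})$ is characteristic; with the factorization above this makes $\Inn(\mathbb{W})C$ normal in $\Aut(\mathbb{W})$. For the intersection, if $\widetilde{\sigma} = \widehat{w}\,\theta$ with $\theta \in C$, then evaluating at $w_1$ (fixed by both $\widetilde{\sigma}$ and $\theta$) forces $w \in \C_{\mathbb{W}}(w_1) = \langle w_1\rangle$, so $\widehat{w} \in C$ by Lemma \ref{C-intersection-w-hat} and hence $\widetilde{\sigma} \in C$; but $\widetilde{\sigma}(\mathbb{W}_i) = \mathbb{W}_{\sigma(i)}$ must then equal $\mathbb{W}_i$ by Lemma \ref{invariance-Wi}, so $\sigma = \id$ by Lemma \ref{parabolic-subgroups-notconjugate}. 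I expect the rigidity step of the third paragraph to be the main obstacle: the normalization $\phi(w_1) = w_1$ combined with the tight centralizer $\C_{\mathbb{W}}(w_1) = \langle w_1\rangle$ is precisely what converts conjugacy-level data into the rigid equalities, and keeping the bookkeeping of the successive reductions consistent is the delicate part.
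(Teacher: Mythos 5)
Your proposal is correct and takes essentially the same route as the paper: both normalize an arbitrary automorphism so that it fixes $w_1$, use that the maximal finite subgroups are precisely the conjugates of the $\mathbb{W}_i$, force the conjugating element into $\mathbb{W}_{\sigma(i)}$ via $\C_{\mathbb{W}}(w_1)=\langle w_1\rangle$ (Lemma \ref{centralisers of involutions}), and then conclude with Lemmas \ref{invariance-Wi} and \ref{parabolic-subgroups-notconjugate} plus the conjugation formula $\widetilde{\sigma}\,\theta_i^k\,\widetilde{\sigma}^{-1}=\theta_{\sigma(i)}^k$ for normality and the trivial intersection. The only differences are immaterial bookkeeping: you peel off the graph automorphism before the inner one (the paper does the reverse), and in the intersection step you first place $\widehat{w}$ inside $C$ before invoking non-conjugacy of the $\mathbb{W}_i$, whereas the paper contradicts it directly.
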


\begin{proof}
Let $\phi \in \Aut(\mathbb{W})$. Since automorphisms preserve orders of elements, by Lemma \ref{involutions are conjugate in TW_n}, there exists $x \in \mathbb{W}$ such that $\widehat{x}\phi(w_1) = w_1$. Note that an automorphism maps maximal finite subgroups to maximal finite subgroups. Further, all maximal finite subgroups of $\mathbb{W}$ are conjugates of $\mathbb{W}_2, \ldots, \mathbb{W}_n$. Thus, for each  $2\leq i \leq n$, there exist $2 \le \alpha(i) \le n$ and elements $x_i \in \mathbb{W}$ such that $\widehat{x}\phi (\mathbb{W}_i) = \mathbb{W}_{\alpha(i)}^{x_i}$. We claim that $x_i \in \mathbb{W}_{\alpha(i)}$. Since $\widehat{x}\phi(w_1) = w_1$, we have $w_1 \in \mathbb{W}_{\alpha(i)}^{x_i}$. Since  $\mathbb{W}_{\alpha(i)} \in \mathcal{W}$, by Lemma \ref{involutions are conjugate in TW_n}, all the involutions in $\mathbb{W}_{\alpha(i)}$ are conjugate to $w_1$. Thus, we have $w_1 = w_1^{wx_i}$ for some $w\in \mathbb{W}_{\alpha(i)}$. By Lemma \ref{centralisers of involutions}, we have $wx_i\in \{1, w_1\}$, which gives $x_i \in \{w^{-1}, w^{-1}w_1\}\subset \mathbb{W}_{\alpha(i)}$. Hence the claim holds and we obtain $\widehat{x}\phi (\mathbb{W}_i) = \mathbb{W}_{\alpha(i)}$. Since $\widehat{x}\phi$ is an automorphism, it maps distinct subgroups to distinct subgroups, and hence $\alpha$ must be a permutation of $\{2, \ldots, n\}$. Moreover, since $\widehat{x}\phi$ preserves orders, we must have $\alpha \in S_{k_1}\times S_{k_2} \times \dots \times S_{k_l}$ seen as subgroup of $S_{\Pi\setminus \{1\}}$. Let $\widetilde{\alpha} \in \Aut \big(\Gamma_{(\mathbb{W}, \mathbb{S})} \big)$ be the graph automorphism induced by $\alpha$. Then we obtain $\widetilde{\alpha}^{-1} \widehat{x}\phi (\mathbb{W}_i)=\mathbb{W}_i$ for each $2 \le i \le n$. Finally, by Lemma \ref{invariance-Wi}, $\widetilde{\alpha}^{-1}\widehat{x}\phi  \in C$. Let us say  $\widetilde{\alpha}^{-1}\widehat{x}\phi = \theta$ for some $\theta \in C$. Now we have 
$$\phi^{-1} = \theta^{-1} \widetilde{\alpha}^{-1}\widehat{x}=  (\theta^{-1} \widetilde{\alpha}^{-1}\widehat{x} \widetilde{\alpha} \theta) (\theta^{-1} \widetilde{\alpha}^{-1}),$$
where $(\theta^{-1} \widetilde{\alpha}^{-1}\widehat{x} \widetilde{\alpha} \theta) \in \Inn(\mathbb{W})$. Hence, we obtain $$\Aut(\mathbb{W}) = \Inn(\mathbb{W}) C \Aut \big(\Gamma_{(\mathbb{W}, \mathbb{S})}\big).$$
\par
Since $\Inn(\mathbb{W})$ is normal in $\Aut(\mathbb{W})$, it follows that $\Inn(\mathbb{W})C$ is a subgroup of $\Aut(\mathbb{W})$. We now claim that $\Inn(\mathbb{W})C$ is normal in $\Aut(\mathbb{W})$. For this, it is sufficient to show that $ \gamma^{-1} C  \gamma \leq C$ for all $ \gamma \in \Aut \big(\Gamma_{(\mathbb{W}, S)}\big)$. In particular, it suffices to check that $\gamma^{-1} C_i \gamma \leq C$, for all $2\leq i \leq n$. Let $\widetilde{\beta} \in \Aut \big(\Gamma_{(\mathbb{W}, \mathbb{S})}\big)$ be induced by the permutation $\beta$ of $\{2, 3, \ldots, n\}$ and $\theta_i^k \in C_i$ for some $1\leq k < t_i$ with $\gcd(k, t_i)=1$. Computing $\widetilde{\beta}^{-1} \theta_i^k \widetilde{\beta}$ on the generators, we get
\begin{equation}\label{graph-action-formula}
\widetilde{\beta}^{-1} \theta_i^k \widetilde{\beta}(w_j) = \widetilde{\beta}^{-1} \theta_i^k \big(w_{\beta(j)}\big) = 
\begin{cases}
w_j & \text{if}\ \beta(j)\neq i,\\
w_1(w_1w_j)^k & \text{if}\ \beta(j)=i.
\end{cases}
\end{equation}
Thus, $\widetilde{\beta}^{-1} \theta_i^k \widetilde{\beta}= \theta_{\beta^{-1}(i)}^k  \in C_{\beta^{-1}(i)}  \leq C$, and the claim holds.
\par 
Finally, we show that $\Inn(\mathbb{W}) C \cap \Aut \big(\Gamma_{(\mathbb{W}, \mathbb{S})}\big) = 1$. Suppose that $1\neq \widetilde{\sigma} = \widehat{w}\theta \in \Inn(\mathbb{W}) C \cap \Aut \big(\Gamma_{(\mathbb{W}, \mathbb{S})}\big)$ for some $\widetilde{\sigma}\in \Aut \big(\Gamma_{(\mathbb{W}, \mathbb{S})}\big)$, $w\in W$ and $\theta\in C$. Since $\sigma \neq 1$, there exist $2\leq i \neq j\leq n$ such that $\sigma(i) = j$. A simple check gives 
$$(W_i)^w=\widehat{w}(W_i) = \widetilde{\sigma} \theta^{-1}(W_i) = \widetilde{\sigma}(W_i) = W_j,$$
which contradicts Lemma \ref{parabolic-subgroups-notconjugate}. Thus, $\Inn(\mathbb{W}) C \cap \Aut \big(\Gamma_{(\mathbb{W}, \mathbb{S})}\big) = 1$, and hence
$$\Aut(\mathbb{W}) = \big ( \Inn(\mathbb{W}) C \big ) \rtimes \Aut \big(\Gamma_{(\mathbb{W}, \mathbb{S})}\big).$$
\end{proof}

\medskip

\section{Splitting of exact sequences of automorphism groups} \label{splitting-sequences}

Theorem \ref{splitting-by-graph-auto} can be viewed as splitting of the short exact sequence 
$$1\rightarrow  \Inn(\mathbb{W}) C\rightarrow \Aut(\mathbb{W}) \rightarrow  \Aut \big(\Gamma_{(\mathbb{W}, \mathbb{S})}\big)\rightarrow 1.$$
Our next objective is to find conditions for the splitting of the natural short exact sequence $$1\rightarrow \Inn(\mathbb{W})\rightarrow \Aut(\mathbb{W}) \rightarrow \Out(\mathbb{W})\rightarrow 1.$$

\begin{lemma}\label{prod-Un-splitting}
Let $p_1, p_2, \ldots, p_n$ be odd primes and $k_1, k_2, \ldots, k_n$ positive integers. Then the short exact sequence of groups
\begin{equation}\label{prod-un-exact-sequence}
1\rightarrow \{\pm 1\} \rightarrow \prod_{i=1}^{n}\U_{p_i^{k_i}} \rightarrow \prod_{i=1}^{n}\U_{p_i^{k_i}}/\{\pm 1\}  \rightarrow 1
\end{equation}
 splits if and only if $p_j\equiv 3 \mod 4$ for some $1\leq j \leq n$.
\end{lemma}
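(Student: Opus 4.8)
The plan is to reformulate the splitting as a question about direct summands of a finite abelian group and then resolve it by an explicit homomorphism computation. Since $\prod_{i=1}^n \U_{p_i^{k_i}}$ is abelian, the distinguished subgroup $\{\pm 1\}$ is central, so the sequence \eqref{prod-un-exact-sequence} splits if and only if $\{\pm 1\}$ is a direct summand of $G := \prod_{i=1}^n \U_{p_i^{k_i}}$. Here I would use that for an odd prime $p$ the group $\U_{p^k}$ is cyclic of order $n_i := p_i^{k_i-1}(p_i - 1)$, with $-1$ its unique element of order two; thus $G$ is a finite abelian group and the relevant element is the ``diagonal'' involution $\epsilon = (-1, -1, \ldots, -1)$.

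For an abelian group, a subgroup $A$ is a direct summand precisely when the inclusion admits a retraction, so the first step is to record that \eqref{prod-un-exact-sequence} splits if and only if there is a homomorphism $r \colon G \to \langle \epsilon \rangle \cong \mathbb{Z}/2\mathbb{Z}$ with $r(\epsilon) = \epsilon$. Writing the target additively, I would then use the canonical decomposition $\Hom(G, \mathbb{Z}/2\mathbb{Z}) \cong \prod_{i=1}^n \Hom(\U_{p_i^{k_i}}, \mathbb{Z}/2\mathbb{Z})$, so that $r = (r_1, \ldots, r_n)$ and $r(\epsilon) = \sum_{i=1}^n r_i(-1)$. The problem is thus reduced to deciding, coordinate by coordinate, when the order-two element $-1 \in \U_{p_i^{k_i}}$ can be sent to the nontrivial element of $\mathbb{Z}/2\mathbb{Z}$.

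The decisive computation is the evaluation of a homomorphism $r_i \colon \mathbb{Z}/n_i\mathbb{Z} \to \mathbb{Z}/2\mathbb{Z}$ on $-1$. Such an $r_i$ is determined by the image $c_i \in \{0,1\}$ of a generator, and since $-1$ equals $(n_i/2)$ times a generator, $r_i(-1) = (n_i/2)\,c_i \bmod 2$. Because $p_i^{k_i-1}$ is odd, the parity of $n_i/2$ is the parity of $(p_i-1)/2$, which is odd exactly when $p_i \equiv 3 \pmod 4$ and even exactly when $p_i \equiv 1 \pmod 4$. Hence $r_i(-1)$ can be made nontrivial if and only if $p_i \equiv 3 \pmod 4$, and $r(\epsilon) = \sum_i r_i(-1)$ can be made equal to $\epsilon$ if and only if at least one index $j$ satisfies $p_j \equiv 3 \pmod 4$; in that case one sets $c_j = 1$ and all other $c_i = 0$. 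This yields both directions at once.

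The part that needs the most care is this last arithmetic link between the $2$-adic valuation of $p_i - 1$ and the residue $p_i \bmod 4$, together with the bookkeeping that the diagonal involution's image is the sum of the individual coordinate contributions; everything else is formal. As an alternative to the $\Hom$-computation, I could instead pass to the Sylow $2$-subgroup $H = \prod_i C_{2^{v_2(p_i-1)}}$ of $G$ and invoke the purity (height) criterion for finite abelian $2$-groups: the order-two element $\epsilon$ generates a direct summand of $H$ if and only if $\epsilon \notin 2H$, which fails precisely when every $v_2(p_i-1) \geq 2$, i.e.\ when every $p_i \equiv 1 \pmod 4$.
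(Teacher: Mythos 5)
Your proof is correct, and it takes a genuinely different route from the paper's. The paper handles the two directions separately and structurally: for necessity it observes that if every $p_i \equiv 1 \bmod 4$ then $4$ divides the order of each cyclic factor $\U_{p_i^{k_i}}$, so by the uniqueness part of the fundamental theorem of finite abelian groups the product admits no direct cyclic factor of order $2$; for sufficiency it builds a complement explicitly, writing $\U_{p_1^{k_1}} = H_1 \times \{\pm 1\}$ with $H_1$ cyclic of odd order $(p_1-1)p_1^{k_1-1}/2$ when $p_1 \equiv 3 \bmod 4$, and taking $H = H_1 \times \prod_{i\geq 2} \U_{p_i^{k_i}}$, which misses the diagonal involution $\epsilon$. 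You instead dualize: splitting of the abelian extension is equivalent to a retraction $r \colon G \to \langle \epsilon \rangle$, and the decomposition $\Hom(G, \mathbb{Z}/2\mathbb{Z}) \cong \prod_i \Hom\big(\U_{p_i^{k_i}}, \mathbb{Z}/2\mathbb{Z}\big)$ reduces both directions to the single parity computation $r_i(-1) = (n_i/2)\,c_i \bmod 2$ with $n_i/2 \equiv (p_i-1)/2 \pmod 2$. All the steps check out, including the correct identification of $\{\pm 1\}$ as the diagonal subgroup (matching the paper's identification of $\widehat{w_1}$ with $-1$), the fact that any $c_i \in \{0,1\}$ defines a homomorphism because $n_i$ is even, and the height criterion in your Sylow-$2$ alternative ($\epsilon$ generates a summand iff $\epsilon$ is not a square, which fails exactly when every $p_i \equiv 1 \bmod 4$). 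In substance the sufficiency arguments coincide --- your kernel $\ker r$ for $c_j = 1$, $c_i = 0$ ($i \neq j$) is exactly the paper's complement $H$ --- so the real divergence is in necessity, where you replace the appeal to uniqueness of the primary decomposition by the direct observation that no character can send $\epsilon$ to $1$. Your route buys a uniform, one-shot argument that needs only $\Hom$ out of cyclic groups; the paper's buys an explicit complement and makes the obstruction visible at the level of invariant factors. Either is acceptable, and yours arguably generalizes more cleanly (e.g.\ it immediately proves Lemma \ref{gen-prod-Un-splitting} without first factoring each $\U_{m_i}$ into prime-power pieces).
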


\begin{proof}
We begin by noting that each $\U_{p_i^{k_i}}$ is cyclic group of order $(p_i-1)p_i^{k_i-1}$. Each $p_i$ being an odd prime, either $p_i \equiv 1 \mod 4$ or $p_i \equiv 3 \mod 4$. Let us assume that each $p_i \equiv 1 \mod 4$. Then 4 divides the order of $\U_{p_i^{k_i}}$ for each $i$. It follows from the fundamental theorem of finite abelian groups that $\prod_{i=1}^{n}\U_{p_i^{k_i}}$  cannot have a direct cyclic factor of order $2$. Thus, if the short exact sequence \eqref{prod-un-exact-sequence} splits, then $p_j\equiv 3 \mod 4$ for some $1\leq j \leq n$.
\par

For the converse part, assume without loss of generality that $p_1 \equiv 3 \mod 4$. Then $\frac{(p_1-1)p_1^{k_1-1}}{2}$ is odd, and we have $\U_{p_1^{k_1}}= H_1 \times \{\pm 1\}$, where $H_1$ is cyclic group of order $\frac{(p_1-1)p_1^{k_1-1}}{2}$. Set $H = H_1 \times \prod_{i=2}^{n}\U_{p_i^{k_i}}$. Since $-1\notin H_1$, it follows that $-1 \notin H$. Thus, we obtain $\prod_{i=1}^{n}\U_{p_i^{k_i}} = H \times  \{\pm 1\}$, which is desired.
\end{proof}

\begin{lemma}\label{gen-prod-Un-splitting}
Let $m_1, m_2, \ldots, m_n$ be odd positive integers each greater than one. Then the short exact sequence of groups
\begin{equation*}\label{gen-prod-un-exact-sequence}
1\rightarrow \{\pm 1\} \rightarrow \prod_{i=1}^{n}\U_{m_i} \rightarrow \prod_{i=1}^{n}\U_{m_i}/\{\pm 1\}  \rightarrow 1
\end{equation*}
 splits if and only if there exist some $1\leq j \leq n$ and a prime $p \equiv 3 \mod 4$ such that $p$ divides $m_j$.
\end{lemma}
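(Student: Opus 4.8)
The plan is to reduce the statement to the prime-power case already established in Lemma \ref{prod-Un-splitting} by means of the Chinese Remainder Theorem. Writing each odd integer $m_i > 1$ as $m_i = \prod_{j} p_{ij}^{k_{ij}}$ with the $p_{ij}$ odd primes, the Chinese Remainder Theorem gives a ring isomorphism $\mathbb{Z}/m_i\mathbb{Z} \cong \prod_j \mathbb{Z}/p_{ij}^{k_{ij}}\mathbb{Z}$, and hence an isomorphism of unit groups $\U_{m_i} \cong \prod_j \U_{p_{ij}^{k_{ij}}}$. Taking the product over $i$ yields
\begin{equation*}
\prod_{i=1}^{n}\U_{m_i} \cong \prod_{i=1}^{n}\prod_{j}\U_{p_{ij}^{k_{ij}}},
\end{equation*}
which is precisely a product of unit groups of odd prime powers, exactly the shape handled in Lemma \ref{prod-Un-splitting}. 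The primes $p_{ij}$ need not be distinct, but this causes no difficulty, since Lemma \ref{prod-Un-splitting} permits repetitions in its list of primes.

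The decisive point, and the step I would carry out first and most carefully, is to verify that the distinguished subgroup $\{\pm 1\}$ is respected by this isomorphism. The element $-1 \in \U_{m_i}$ is the class of $m_i - 1$, and under the Chinese Remainder isomorphism it corresponds to the tuple all of whose coordinates equal the class of $-1$; that is, $-1 \mapsto (-1, -1, \ldots, -1)$. Consequently the subgroup $\{\pm 1\} \leq \prod_i \U_{m_i}$ is carried isomorphically onto the subgroup generated by the global $-1$ in $\prod_{i,j}\U_{p_{ij}^{k_{ij}}}$, which is exactly the subgroup $\{\pm 1\}$ appearing in Lemma \ref{prod-Un-splitting}. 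Thus the short exact sequence in the present statement is isomorphic, \emph{as an extension}, to the sequence of Lemma \ref{prod-Un-splitting} for the list of primes $\{p_{ij}\}$, so the two split or fail to split simultaneously.

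With this identification in hand, the conclusion follows by translating the criterion of Lemma \ref{prod-Un-splitting}: that sequence splits if and only if some prime in the list $\{p_{ij}\}$ is congruent to $3 \pmod{4}$. Since the list $\{p_{ij}\}$ is precisely the collection of prime divisors of $m_1, \ldots, m_n$, the existence of some $p_{ij} \equiv 3 \pmod{4}$ is equivalent to the existence of a prime $p \equiv 3 \pmod{4}$ dividing some $m_j$, which is the stated equivalence.

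I expect the only genuine obstacle to be the bookkeeping of the middle paragraph: one must be certain that the source $\{\pm 1\}$ maps to the target $\{\pm 1\}$ and not to some other order-two subgroup, since splitting is sensitive to which central involution is distinguished. Should a self-contained argument bypassing Lemma \ref{prod-Un-splitting} be preferred, the same reduction shows that the sequence splits if and only if $\langle -1 \rangle$ is a direct factor of the finite abelian group $\prod_i \U_{m_i}$, which for an involution holds exactly when $-1$ is \emph{not} a square. A prime $p \equiv 3 \pmod{4}$ dividing some $m_j$ produces a factor $\U_{p^{k}}$ whose $2$-part is cyclic of order $2$ and in which $-1$ is a non-residue, so $-1$ is not a square in the product; conversely, if every relevant prime is $\equiv 1 \pmod{4}$ then $-1$ is a square in each factor, hence a square in the product, and no splitting occurs.
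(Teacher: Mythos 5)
Your proposal is correct and follows essentially the same route as the paper: decompose each $\U_{m_i}$ via the Chinese Remainder Theorem into unit groups of odd prime powers and invoke Lemma \ref{prod-Un-splitting}, whose proof indeed does not require the primes to be distinct. Your middle paragraph checking that $-1 \in \U_{m_i}$ corresponds to the diagonal tuple $(-1,\ldots,-1)$, so that the two extensions are isomorphic \emph{as extensions}, makes explicit a point the paper's two-line proof leaves implicit, and your alternative criterion (splitting holds if and only if $-1$ is not a square in $\prod_i \U_{m_i}$) is also valid.
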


\begin{proof}
Note that if $m = p_1^{t_1} p_2^{t_2} \cdots p_k^{t_k}$ is the prime factorisation an integer $m$, then $$ \U_{m} \cong  \U_{p_1^{t_1}} \times \U_{p_2^{t_2}} \times \cdots \times \U_{p_k^{t_k}}.$$
The assertion now follows from this fact and Lemma \ref{prod-Un-splitting}.
\end{proof}

\begin{lemma}\label{equivalence-between-C-Un}
The following statements are equivalent.
\begin{enumerate}
\item The sequence $1\rightarrow \langle \widehat{w_1}\rangle \rightarrow C \rightarrow C/ \langle \widehat{w_1}\rangle \rightarrow 1$ splits.
\item The sequence $1\rightarrow \{\pm 1\} \rightarrow   \prod_{i=1}^{l} (\U_{m_i})^{k_i} \rightarrow   \prod_{i=1}^{l} (\U_{m_i})^{k_i}/\{\pm 1\}  \rightarrow 1$ splits.
\end{enumerate}
\end{lemma}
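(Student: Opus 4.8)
The plan is to transport one extension onto the other through the isomorphism \eqref{C-product-U} and thereby reduce the claim to the elementary fact that two isomorphic short exact sequences split simultaneously. Write $\varphi \colon C \xrightarrow{\sim} \prod_{i=1}^{l}(\U_{m_i})^{k_i}$ for the isomorphism of \eqref{C-product-U}, under which each generator $\theta_i^k$ of $C_i \cong \U_{t_i}$ corresponds to the class of $k$ in $\U_{t_i}$ placed in the appropriate coordinate.

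First I would verify that $\varphi$ carries the distinguished subgroup $\langle \widehat{w_1}\rangle$ onto $\{\pm 1\}$. By the computation at the end of the proof of Lemma \ref{C-intersection-w-hat}, we have $\widehat{w_1} = \prod_{i=2}^{n}\theta_i^{t_i-1}$, and since $t_i - 1 \equiv -1 \pmod{t_i}$, the factor $\theta_i^{t_i-1}$ corresponds to $-1 \in \U_{t_i}$. Hence $\varphi(\widehat{w_1}) = (-1, -1, \ldots, -1) = -1$, the global central involution of $\prod_{i=1}^{l}(\U_{m_i})^{k_i}$; this is precisely the identification recorded just after Lemma \ref{C-intersection-w-hat}. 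As $\widehat{w_1}$ has order two, $\varphi$ restricts to an isomorphism $\langle \widehat{w_1}\rangle \xrightarrow{\sim} \{\pm 1\}$ and induces an isomorphism $\bar{\varphi} \colon C/\langle \widehat{w_1}\rangle \xrightarrow{\sim} \left(\prod_{i=1}^{l}(\U_{m_i})^{k_i}\right)/\{\pm 1\}$ on the quotients.

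Next I would assemble the commutative ladder of short exact sequences whose left vertical arrow is $\varphi|_{\langle \widehat{w_1}\rangle}$, whose middle arrow is $\varphi$, and whose right arrow is $\bar{\varphi}$, all three being isomorphisms. Given a splitting $s \colon C/\langle \widehat{w_1}\rangle \rightarrow C$ of the sequence in $(1)$, the map $s' = \varphi \circ s \circ \bar{\varphi}^{-1}$ is a splitting of the sequence in $(2)$: using commutativity of the right square, $\pi_2 \circ s' = \bar{\varphi} \circ \pi_1 \circ s \circ \bar{\varphi}^{-1} = \bar{\varphi} \circ \id \circ \bar{\varphi}^{-1} = \id$, where $\pi_1, \pi_2$ denote the two quotient maps. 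The same construction run through $\varphi^{-1}$ gives the reverse implication, establishing the equivalence of $(1)$ and $(2)$.

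There is no genuine obstacle here: the entire content of the lemma is captured by the isomorphism \eqref{C-product-U} together with the identification $\widehat{w_1} \leftrightarrow -1$. The only point that warrants a moment's care is confirming that $\varphi$ matches the whole subgroup $\langle \widehat{w_1}\rangle$ with $\{\pm 1\}$, and not merely some order-two element with some other; this is immediate once $\widehat{w_1}$ is written explicitly as $\prod_{i=2}^{n}\theta_i^{t_i-1}$, so that its image is exactly the global $-1$.
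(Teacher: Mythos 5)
Your proposal is correct and is essentially the paper's own argument: the paper's proof simply observes that under the isomorphism \eqref{C-product-U} one has $C \cong \prod_{i=1}^{l}(\U_{m_i})^{k_i}$ with $\langle \widehat{w_1}\rangle$ corresponding to $\{\pm 1\}$, and declares the equivalence immediate. You have merely filled in the routine details the paper leaves implicit --- the explicit identification $\widehat{w_1} = \prod_{i=2}^{n}\theta_i^{t_i-1} \leftrightarrow -1$ and the transport of splittings through the commutative ladder --- all of which check out.
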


\begin{proof}
The assertion is immediate by observing that $C \cong  \prod_{i=1}^{l} (\U_{m_i})^{k_i}$ and $\langle \widehat{w_1}\rangle \cong \{\pm 1\}$ under this isomorphism.
\end{proof}

If the short exact sequence $$1\rightarrow \langle \widehat{w_1} \rangle \rightarrow C \rightarrow C/\langle \widehat{w_1} \rangle\rightarrow 1$$ splits, then we denote the complement of $\langle \widehat{w_1} \rangle$  in $C$ by $D$.

\begin{theorem}\label{splitting of inn-c-sequence}
Let $\mathbb{W}$ be the group with presentation \ref{star-group}. If there exists a prime $p\equiv 3 \mod 4$ such that $p$ divides $m_i$ for some $1\leq i \leq l$, then the short exact sequence 
\begin{equation}\label{sequence-inn-c-d}
1\rightarrow \Inn(\mathbb{W})\rightarrow \Inn(\mathbb{W})C \rightarrow \Inn(\mathbb{W})C/\Inn(\mathbb{W})\rightarrow 1
\end{equation}
splits. Moreover,
\begin{equation}
\Aut(\mathbb{W})= \big (\Inn(\mathbb{W}) \rtimes D\big ) \rtimes \Aut \big(\Gamma_{(\mathbb{W}, \mathbb{S})}\big).
\end{equation}
\end{theorem}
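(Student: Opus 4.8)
The plan is to reduce the asserted splitting to the already-isolated number-theoretic criterion and then to graft the resulting decomposition of $\Inn(\mathbb{W})C$ onto Theorem \ref{splitting-by-graph-auto}. First I would unpack the hypothesis: since some prime $p \equiv 3 \bmod 4$ divides some $m_i$, Lemma \ref{gen-prod-Un-splitting} applied to the product $\prod_{i=1}^{l}(\U_{m_i})^{k_i}$ shows that the sequence $1\rightarrow \{\pm 1\}\rightarrow \prod_{i=1}^{l}(\U_{m_i})^{k_i}\rightarrow \prod_{i=1}^{l}(\U_{m_i})^{k_i}/\{\pm 1\}\rightarrow 1$ splits. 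Transporting this back through the identification $C \cong \prod_{i=1}^{l}(\U_{m_i})^{k_i}$ and $\langle \widehat{w_1}\rangle \cong \{\pm 1\}$ via Lemma \ref{equivalence-between-C-Un}, I obtain that $1\rightarrow \langle \widehat{w_1}\rangle\rightarrow C\rightarrow C/\langle \widehat{w_1}\rangle\rightarrow 1$ splits. Because $C$ is abelian, its complement $D$ gives an internal direct product $C = \langle \widehat{w_1}\rangle \times D$ with $\langle \widehat{w_1}\rangle \cap D = 1$.

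The heart of the argument is then to promote this complement $D$ of $\langle \widehat{w_1}\rangle$ in $C$ to a complement of $\Inn(\mathbb{W})$ in $\Inn(\mathbb{W})C$. By Lemma \ref{C-intersection-w-hat} we have $\Inn(\mathbb{W}) \cap C = \langle \widehat{w_1}\rangle$, and since $D \subseteq C$ this yields $\Inn(\mathbb{W}) \cap D = (\Inn(\mathbb{W}) \cap C) \cap D = \langle \widehat{w_1}\rangle \cap D = 1$. For the product, since $\langle \widehat{w_1}\rangle \subseteq \Inn(\mathbb{W})$ and $C = \langle \widehat{w_1}\rangle D$, I get $\Inn(\mathbb{W}) D = \Inn(\mathbb{W}) \langle \widehat{w_1}\rangle D = \Inn(\mathbb{W}) C$. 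As $\Inn(\mathbb{W})$ is normal in $\Aut(\mathbb{W})$, and hence in the subgroup $\Inn(\mathbb{W})C$, these two facts give the internal semidirect decomposition $\Inn(\mathbb{W})C = \Inn(\mathbb{W}) \rtimes D$, which is exactly the splitting of the sequence \eqref{sequence-inn-c-d}.

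Finally, I would substitute this decomposition into Theorem \ref{splitting-by-graph-auto}. That theorem supplies $\Aut(\mathbb{W}) = (\Inn(\mathbb{W})C) \rtimes \Aut\big(\Gamma_{(\mathbb{W}, \mathbb{S})}\big)$, where $\Inn(\mathbb{W})C$ is normal in $\Aut(\mathbb{W})$ with complement $\Aut\big(\Gamma_{(\mathbb{W}, \mathbb{S})}\big)$; replacing the normal factor by $\Inn(\mathbb{W}) \rtimes D$ legitimizes the iterated semidirect product and gives $\Aut(\mathbb{W}) = \big(\Inn(\mathbb{W}) \rtimes D\big) \rtimes \Aut\big(\Gamma_{(\mathbb{W}, \mathbb{S})}\big)$, as claimed. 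The only genuine difficulty lies in the number-theoretic splitting criterion, which has already been dispatched in Lemmas \ref{prod-Un-splitting} and \ref{gen-prod-Un-splitting} via the fundamental theorem of finite abelian groups; the remaining steps are a routine assembly, and the one point demanding care is simply verifying that the chosen complement $D$ of $\langle \widehat{w_1}\rangle$ in $C$ meets $\Inn(\mathbb{W})$ trivially, which follows cleanly from the intersection computation in Lemma \ref{C-intersection-w-hat}.
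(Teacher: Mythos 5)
Your proof is correct and follows essentially the same route as the paper's: invoke Lemmas \ref{gen-prod-Un-splitting} and \ref{equivalence-between-C-Un} to split $C$ over $\langle \widehat{w_1}\rangle$, use Lemma \ref{C-intersection-w-hat} to verify that the complement $D$ satisfies $\Inn(\mathbb{W}) \cap D = 1$ and $\Inn(\mathbb{W})D = \Inn(\mathbb{W})C$, and then feed this decomposition into Theorem \ref{splitting-by-graph-auto}. Your write-up merely makes explicit the routine intersection and product computations that the paper states in a single sentence.
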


\begin{proof}
Since $p\equiv 3 \mod 4$ and $p$ divides $m_i$ for some $1\leq i \leq l$, by Lemma \ref{gen-prod-Un-splitting} and Lemma \ref{equivalence-between-C-Un}, it follows that the short exact sequence $1\rightarrow \langle \widehat{w_1} \rangle \rightarrow C \rightarrow C/\langle \widehat{w_1} \rangle\rightarrow 1$ splits. As defined earlier, let $D < C$ be the complement of $\langle \widehat{w_1} \rangle$ in $C$, i.e., $D \cap \langle \widehat{w_1} \rangle = 1$ and $C = D\langle \widehat{w_1} \rangle$. 
By Lemma \ref{C-intersection-w-hat}, since $\Inn(\mathbb{W}) \cap C = \langle \widehat{w_1} \rangle$,  it follows that $\Inn(\mathbb{W}) \cap D = 1$ and $\Inn(\mathbb{W})D = \Inn(\mathbb{W})C$. This shows that the short exact sequence \eqref{sequence-inn-c-d}  splits. Finally, the second assertion follows from Theorem \ref{splitting-by-graph-auto}. 
\end{proof}

\begin{theorem}\label{splitting-out-sequence}
Let $\mathbb{W}$ be the group with presentation \ref{star-group}. If there exists a prime $p\equiv 3 \mod 4$ such that $p$ divides $m_i$ for some $1\leq i \leq l$ with multiplicity of $m_i$ being one, then the short exact sequence
\begin{equation}\label{natural-exact-sequence}
1\rightarrow \Inn(\mathbb{W})\rightarrow \Aut(\mathbb{W}) \rightarrow \Out(\mathbb{W})\rightarrow 1
\end{equation}
 splits. In particular, $\Out (\mathbb{W}) \cong D \rtimes \Aut \big(\Gamma_{(\mathbb{W}, \mathbb{S})}\big)$.
\end{theorem}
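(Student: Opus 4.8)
The plan is to exhibit an explicit complement to $\Inn(\mathbb{W})$ inside $\Aut(\mathbb{W})$ of the form $D\rtimes\Aut\big(\Gamma_{(\mathbb{W}, \mathbb{S})}\big)$, refining the decomposition $\Aut(\mathbb{W})=\big(\Inn(\mathbb{W})\rtimes D\big)\rtimes\Aut\big(\Gamma_{(\mathbb{W}, \mathbb{S})}\big)$ of Theorem \ref{splitting of inn-c-sequence}. The only new content beyond that theorem is that, under the sharper multiplicity-one hypothesis, the complement $D$ of $\langle\widehat{w_1}\rangle$ in $C$ can be chosen \emph{invariant} under the graph automorphisms, so that $D\rtimes\Aut\big(\Gamma_{(\mathbb{W}, \mathbb{S})}\big)$ is an honest subgroup that meets $\Inn(\mathbb{W})$ trivially.

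First I would choose $D$ with care. Let $A_{i_0}$ be a block whose exponent $m_{i_0}$ has multiplicity one, i.e.\ $k_{i_0}=1$, and is divisible by a prime $p\equiv 3\bmod 4$; write $A_{i_0}=\{j_0\}$, so that $C_{j_0}\cong\U_{m_{i_0}}$. Since $p\equiv 3\bmod 4$ divides $m_{i_0}$, the argument of Lemma \ref{prod-Un-splitting} yields a direct factorization $C_{j_0}=E\times\langle\theta_{j_0}^{t_{j_0}-1}\rangle$ in which $\theta_{j_0}^{t_{j_0}-1}$ is the unique involution of $C_{j_0}$ and $\theta_{j_0}^{t_{j_0}-1}\notin E$. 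I then set $D=E\times\prod_{j\neq j_0}C_j$. A direct index count gives $|D|=|C|/2$, and since $\widehat{w_1}=\prod_{j=2}^{n}\theta_j^{t_j-1}$ has $j_0$-component $\theta_{j_0}^{t_{j_0}-1}\notin E$, we get $D\cap\langle\widehat{w_1}\rangle=1$; as $C$ is finite abelian this forces $D\langle\widehat{w_1}\rangle=C$, so $D$ is a complement to $\langle\widehat{w_1}\rangle$ in $C$ as in Theorem \ref{splitting of inn-c-sequence}.

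The crucial step, and the main obstacle, is verifying that this particular $D$ is normalized by $\Aut\big(\Gamma_{(\mathbb{W}, \mathbb{S})}\big)=S_{k_1}\times\cdots\times S_{k_l}$. By \eqref{graph-action-formula}, a graph automorphism $\widetilde\beta$ conjugates $\theta_j^k$ to $\theta_{\beta^{-1}(j)}^k$, hence merely permutes the factors $C_j$ within each block $A_i$. For each block $A_i$ with $i\neq i_0$, the subgroup $D$ contains the entire factor $\prod_{j\in A_i}C_j\cong(\U_{m_i})^{k_i}$, which is visibly invariant under the $S_{k_i}$ permuting its coordinates. For the distinguished block $A_{i_0}=\{j_0\}$, the equality $k_{i_0}=1$ makes $S_{k_{i_0}}$ trivial, so $\widetilde\beta$ fixes $C_{j_0}$ and therefore fixes $E$. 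Thus $\widetilde\beta(D)=D$. This is exactly where the multiplicity-one hypothesis is indispensable: were $m_{i_0}$ to occur with multiplicity $k_{i_0}>1$, splitting $\{\pm1\}$ off a single coordinate of $(\U_{m_{i_0}})^{k_{i_0}}$ would produce a $D$ that a coordinate transposition in $S_{k_{i_0}}$ carries off itself, and no $\Aut\big(\Gamma_{(\mathbb{W}, \mathbb{S})}\big)$-invariant complement need exist.

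Finally I would assemble the complement. Since $\Aut\big(\Gamma_{(\mathbb{W}, \mathbb{S})}\big)$ normalizes $D$ and $D\cap\Aut\big(\Gamma_{(\mathbb{W}, \mathbb{S})}\big)=1$, the product $D\rtimes\Aut\big(\Gamma_{(\mathbb{W}, \mathbb{S})}\big)$ is a subgroup. From $\Inn(\mathbb{W})\cap C=\langle\widehat{w_1}\rangle$ (Lemma \ref{C-intersection-w-hat}) and $D\cap\langle\widehat{w_1}\rangle=1$ I get $\Inn(\mathbb{W})\cap D=1$; combining this with $\Inn(\mathbb{W})C\cap\Aut\big(\Gamma_{(\mathbb{W}, \mathbb{S})}\big)=1$ from Theorem \ref{splitting-by-graph-auto} (if $d\gamma\in\Inn(\mathbb{W})$ with $d\in D$, $\gamma\in\Aut(\Gamma_{(\mathbb{W}, \mathbb{S})})$, then $\gamma\in\Inn(\mathbb{W})C$, forcing $\gamma=1$ and then $d=1$) gives $\Inn(\mathbb{W})\cap\big(D\rtimes\Aut(\Gamma_{(\mathbb{W}, \mathbb{S})})\big)=1$. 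For surjectivity, Theorem \ref{splitting-by-graph-auto} gives $\Aut(\mathbb{W})=\Inn(\mathbb{W})\,C\,\Aut\big(\Gamma_{(\mathbb{W}, \mathbb{S})}\big)$, and since $C=D\langle\widehat{w_1}\rangle$ with $\widehat{w_1}\in\Inn(\mathbb{W})$ and $C$ abelian, one has $\Inn(\mathbb{W})C=\Inn(\mathbb{W})D$; hence $\Aut(\mathbb{W})=\Inn(\mathbb{W})\big(D\rtimes\Aut(\Gamma_{(\mathbb{W}, \mathbb{S})})\big)$. Therefore $D\rtimes\Aut\big(\Gamma_{(\mathbb{W}, \mathbb{S})}\big)$ is a complement to $\Inn(\mathbb{W})$, the sequence \eqref{natural-exact-sequence} splits, and mapping the complement isomorphically onto the quotient yields $\Out(\mathbb{W})\cong D\rtimes\Aut\big(\Gamma_{(\mathbb{W}, \mathbb{S})}\big)$.
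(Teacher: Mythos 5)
Your proposal is correct and takes essentially the same route as the paper: the same complement $D$ obtained by splitting $\{\pm 1\}$ off the multiplicity-one factor via Lemma \ref{prod-Un-splitting}, the same use of formula \eqref{graph-action-formula} together with the fixed vertex of the singleton block to show that $\Aut\big(\Gamma_{(\mathbb{W}, \mathbb{S})}\big)$ normalizes $D$, and the same assembly of the three conditions already established around Theorem \ref{splitting-by-graph-auto}. One cosmetic slip: $\theta_{j_0}^{t_{j_0}-1}$ need not be the \emph{unique} involution of $C_{j_0}\cong \U_{m_{i_0}}$ when $m_{i_0}$ has more than one prime factor, but your argument only uses $\theta_{j_0}^{t_{j_0}-1}\notin E$, which your chosen factorization supplies.
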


\begin{proof}
Without loss of generality, we can assume that $m_1$ is divisible by a prime $p \equiv 3 \mod 4$ and $m_1$ has mutiplicity one. Then, by Lemma \ref{prod-Un-splitting}, it follows that $\{\pm 1\}$ is a direct summand of $\U_{m_1}$ with a complement, say $K$. Since $C_2 \cong \U_{m_1}$, we can write $C_2 = \langle \theta_2^{m_1-1} \rangle\times H$, where $H = \langle \theta_2^k \mid k\in K \rangle$. Also note that $\theta_2^{m_1-1}$ can be identified with $\widehat{w_1}$. Set $D = H \times C_3  \times C_4 \times \cdots \times C_n$. Note that $D$ is a complement of $\langle \widehat{w_1} \rangle$ in $C=C_2 \times C_3  \times \cdots \times C_n$, i.e. $D \cap \langle \widehat{w_1} \rangle = 1$ and $C = D\langle \widehat{w_1} \rangle$. Now, to establish splitting of \eqref{natural-exact-sequence} it suffices to prove that
\begin{itemize}
\item $D\Aut \big(\Gamma_{(\mathbb{W}, \mathbb{S})}\big)$ is a subgroup of $\Aut(\mathbb{W})$,
\item $\Inn(\mathbb{W}) \cap D\Aut \big(\Gamma_{(\mathbb{W}, \mathbb{S})}\big) =1$,
\item $\Aut(\mathbb{W}) = \Inn(\mathbb{W}) D\Aut \big(\Gamma_{(\mathbb{W}, \mathbb{S})}\big)$.
\end{itemize}
\par
Let $\widetilde{\alpha} \in \Aut \big(\Gamma_{(\mathbb{W}, \mathbb{S})}\big)$ be induced by the permutation $\alpha$ of $\{1, 2, \ldots, n\}$. Since the exponent $m_1$ has multiplicity one, it follows that $\alpha$ must fix the vertices $1$ and $2$. Let $\theta_i^k \in C_i$ for some $1\leq k < t_i$ with $\gcd(k, t_i)=1$. Computing $\widetilde{\alpha}^{-1} \theta_i^k \widetilde{\alpha}$ on the generators as in \eqref{graph-action-formula}, we get
\begin{equation*}
\widetilde{\alpha}^{-1} \theta_i^k \widetilde{\alpha}(w_j) = \widetilde{\alpha}^{-1} \theta_i^k \big(w_{\alpha(j)}\big) = 
\begin{cases}
w_j & \text{if}\ \alpha(j)\neq i,\\
w_1(w_1w_j)^k & \text{if}\ \alpha(j)=i.
\end{cases}
\end{equation*}
Since $\alpha$ fixes $1$ and $2$, we have $ \widetilde{\alpha}^{-1} H  \widetilde{\alpha} = H$ and $\widetilde{\alpha}^{-1} C_i  \widetilde{\alpha} \leq \prod_{j=3}^n C_j$ for all $3\leq i\leq n$. This proves that $\widetilde{\alpha}^{-1} D  \widetilde{\alpha} \le D$, i.e. $\Aut \big(\Gamma_{(\mathbb{W}, \mathbb{S})}\big)$ normalises $D$, and hence $D\Aut \big(\Gamma_{(\mathbb{W}, \mathbb{S})}\big)$ is a subgroup of $\Aut(W)$.
\par

Let $\alpha$ be a non-trivial permutation of $\{2, 3, \dots, n\}$, i.e. $\alpha(i) = j$ for some $i \neq j$. Then  $\widetilde{\alpha}(\mathbb{W}_i) = \mathbb{W}_j$. Further, by definition of $C$, it follows that $\widetilde{\alpha}\theta(\mathbb{W}_i) = \mathbb{W}_j$ for each $\theta \in C$. Recall from Lemma \ref{parabolic-subgroups-notconjugate} that $\mathbb{W}_i$ is not conjugate to $\mathbb{W}_j$ for $i\neq j$, and hence $\widetilde{\alpha}\theta\notin \Inn(\mathbb{W})$. Thus, $\Inn(\mathbb{W})  \cap D \Aut \big(\Gamma_{(\mathbb{W}, \mathbb{S})}\big) = \Inn(\mathbb{W}) \cap D = 1.$
\par
The final assertion follows from the fact that $\Aut(W) = \Inn(W) C\Aut \big(\Gamma_{(\mathbb{W}, \mathbb{S})}\big)$ and $C = D\langle \widehat{w_1} \rangle$.
\end{proof}
\medskip


\section{Comparison with braid groups and twin groups}\label{comparision}
This concluding section focus on a special odd Coxeter group $L_n$ of rank $n-1$ and fixed exponent three, which is closely related to the well-known {\it braid group} $B_n$ and {\it twin group} $T_n$. Twin groups can be thought of as planar analogues of braid groups, and have received a great deal of attention in recent works \cite{BarVesSin, GonGutiRoq, Khovanov, MostRoq, NaikNandaSingh1, NaikNandaSingh2}.  
\par

The  symmetric group $S_n$, $n \ge 2$, has a Coxeter presentation with generators $\{\tau_1, \ldots, \tau_{n-1} \} $ and defining relations
\begin{enumerate}
\item $\tau_i^2 = 1$ for  $1\leq i \leq n-1$;
\item Braid relations: $\tau_i\tau_{i+1}\tau_i=\tau_{i+1}\tau_i\tau_{i+1}$ for $1\leq i \leq n-2$;
\item Far commutativity relations: $\tau_i\tau_j=\tau_j\tau_i$ for $\mid i - j\mid \geq 2$.
\end{enumerate}
\par

By omitting all relations of type (1), (2) or (3) at a time from the preceding presentation of $S_n$, we get presentations of the braid group $B_n$, the twin group $T_n$ and the group $L_n$ as follows:
\begin{align}\label{definition-Ln}
\nonumber B_n &= \big{\langle} \sigma_1, \sigma_2, \ldots, \sigma_{n-1} ~\mid~ \sigma_j\sigma_k=\sigma_k\sigma_j,~~ \sigma_i\sigma_{i+1}\sigma_i=\sigma_{i+1}\sigma_i\sigma_{i+1}~~\text{for all} ~~\mid j - k\mid \geq 2~~\text{and} ~~1\leq i \leq n-2 \big{\rangle}.&\\
\nonumber T_n &= \big{\langle} s_1, s_2, \ldots, s_{n-1} ~\mid~ s_k^2 = 1,~~s_is_j=s_js_i~~ \text{for all}~~1\leq k \leq n-1~~\text{and} ~~  \mid i - j\mid \geq 2 \big{\rangle}.&\\
\nonumber L_n &= \big{\langle} y_1, y_2, \ldots, y_{n-1} ~\mid~ y_j^2 = 1,~~y_iy_{i+1}y_i=y_{i+1}y_i y_{i+1} ~\text{for all} ~~1\leq j \leq n-1~~\text{and} ~~1\leq i \leq n-2 \big{\rangle}.&\\
\end{align}

The groups $B_n$, $T_n$ and $L_n$ fit into the commutative diagram of surjections as in Figure 5, where $\mathcal{U}_n$ is the universal Coxeter group of rank $(n-1)$, $F_{n-1}$ is the free group of rank $(n-1)$, $\mathcal{R}_n$ is a right angled Artin group with only far commutativity relations (2), and $\mathcal{A}_n$ is an Artin group with only braid relations (3). We compare automorphism groups, $R_\infty$-property and (co)-Hopfian property of $B_n$, $T_n$ and $L_n$.
\begin{small}
$$
\xymatrix{
& & F_{n-1} \ar@{->>}[d]   \ar@{->>}@/^4pc/[ddddr] \ar@{->>}@/_4pc/[ddddl]& & \\
& & \mathcal{U}_n  \ar@{->>}[rd]  \ar@{->>}[ld]& & \\
& T_n  \ar@{->>}[rd]& & L_n   \ar@{->>}[ld] & \\
&   & S_n &   & \\
& \mathcal{R}_n  \ar@{->>}[uu]  \ar@{->>}[rd] &  & \mathcal{A}_n \ar@{->>}[uu]  \ar@{->>}[ld] & \\
& & B_n  \ar@{->>}[uu] & &  \textrm{Figure 5.}}
$$ 
\end{small}

\subsection{Pure subgroups}
There are natural surjective homomorphisms $B_n \to S_n$ and $T_n \to S_n$. The kernel of the former homomorphism is denoted by $P_n$ and is called the {\it pure braid group}, and the kernel of the latter homomorphism is denoted by $PT_n$ and is referred as the {\it pure twin group}. Similarly, for $L_n$, let $\pi:L_n \to S_n$ be the natural surjection, i.e $\pi(y_i)= \tau_i$ for all $i$, and let $PL_n$ denote its kernel.
\par

It is well-known that $P_n$ is not a subgroup of $B_n'$ for $n \ge 2$ and $PT_n$ is not a subgroup of $T_n'$ for $n \ge 3$. Further, $P_n$ is characteristic in $B_n$ for $n \ge 2$ \cite{Dyer-Gross} and $PT_n$ is not characteristic in $T_n$ for $n \ge 4$ \cite[Proposition 6.11]{NaikNandaSingh1}. For $PL_n$, we have 

\begin{proposition}\label{PLn-subgroup-commutator}
The following holds:
\begin{enumerate}
\item $PL_n$ is a subgroup of $L_n'$ for  $n \ge 2$.
\item $PL_n$ is not a characteristic subgroup of $L_n$ for  $n \ge 4$.
\end{enumerate}
\end{proposition}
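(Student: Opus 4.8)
For part (1), the plan is a quick parity argument. I would first identify $L_n'$: since all $y_i$ are involutions and consecutive ones are conjugate via the braid relations, the abelianisation of $L_n$ is $\mathbb{Z}/2\mathbb{Z}$, so $L_n'$ is precisely the kernel of the length-parity homomorphism $\epsilon\colon L_n\to\mathbb{Z}/2\mathbb{Z}$ sending each $y_i$ to the nontrivial element (equivalently, $L_n'$ is the set of even-length elements, by Theorem \ref{generalised theorem for commutator}). The key observation is that $\operatorname{sgn}\circ\pi=\epsilon$, because both homomorphisms send each generator $y_i$ to the nontrivial element ($\pi(y_i)=\tau_i$ is a transposition, of sign $-1$). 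Hence $PL_n=\ker\pi\subseteq\ker(\operatorname{sgn}\circ\pi)=\ker\epsilon=L_n'$, settling (1) for all $n\ge 2$ (the cases $n=2,3$ being degenerate with $PL_n=1$).

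For part (2), the plan is to exhibit a single automorphism $\phi\in\Aut(L_n)$ with $\phi(PL_n)\neq PL_n$. The candidate is the twist fixing every generator except the endpoint $y_1$, namely $\phi(y_1)=y_1y_2y_1$ and $\phi(y_i)=y_i$ for $i\ge 2$; this is the path-presentation analogue of the twist automorphisms $\theta_i^k$ generating $C$ in Section \ref{automorphism section}, using the unique neighbour $y_2$ of $y_1$. I would first verify that $\phi$ is a well-defined involutory automorphism by checking the relators: $\phi(y_1)^2=(y_1y_2y_1)^2=1$ and $\bigl(\phi(y_1)\phi(y_2)\bigr)^3=\bigl((y_1y_2)^2\bigr)^3=(y_1y_2)^6=1$, while all other relators are fixed, and a short computation using $(y_1y_2)^3=1$ gives $\phi^2=\id$. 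Since $PL_n$ is normal, it is $\phi$-invariant if and only if $\ker\pi=\ker(\pi\phi)$, so it suffices to show these two surjections onto $S_n$ have different kernels.

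The crux is the following rigidity statement. Computing on generators, $\pi\phi(y_1)=\tau_1\tau_2\tau_1=(1\,3)$ while $\pi\phi(y_i)=\tau_i$ for $i\ge 2$. If $\ker(\pi\phi)=\ker\pi$, then $\pi\phi=\psi\circ\pi$ for a unique $\psi\in\Aut(S_n)$ with $\psi(\tau_1)=(1\,3)$ and $\psi(\tau_i)=\tau_i$ for $i\ge 2$. I would derive a contradiction by locating $\psi(\tau_1)$: being a nontrivial involution commuting with $\psi(\tau_3),\dots,\psi(\tau_{n-1})$, it lies in $\C_{S_n}\bigl(S_{\{3,\dots,n\}}\bigr)$. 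For $n\ge 5$ this centraliser is $\{1,(1\,2)\}$, forcing $\psi(\tau_1)=(1\,2)=\tau_1\neq(1\,3)$; for $n=4$ the centraliser of $S_{\{3,4\}}$ is a Klein four-group, but the extra candidates are eliminated by injectivity of $\psi$ (ruling out $(3\,4)=\psi(\tau_3)$) together with cycle-type preservation, equivalently the braid relation with $\tau_2$ (ruling out $(1\,2)(3\,4)$). In every case $\psi(\tau_1)=\tau_1\neq(1\,3)$, the contradiction showing $\phi(PL_n)\neq PL_n$, so $PL_n$ is not characteristic for $n\ge 4$.

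The main obstacle is exactly this last rigidity step. The centraliser computation handles it uniformly and, pleasantly, circumvents the exceptional outer automorphism of $S_6$: it is never invoked, and in any case $\psi$ fixes the transposition $\tau_2$ and so is forced to be inner. The only genuine care needed is the low-rank case $n=4$, where the centraliser is larger than $\{1,(1\,2)\}$ and one must bring in the extra constraints to pin down $\psi(\tau_1)$.
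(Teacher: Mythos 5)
Your proof is correct, and in part (2) you have in fact reconstructed the paper's automorphism exactly, though your verification that it moves $PL_n$ takes a genuinely different route. For part (1), your argument is the paper's in different clothing: the paper supposes $PL_n \nleq L_n'$, deduces $\pi(L_n)=\pi(L_n')=A_n$, and contradicts surjectivity, while you observe $\operatorname{sgn}\circ\pi=\epsilon$ and conclude $\ker\pi\subseteq\ker\epsilon=L_n'$; both are the same parity argument. For part (2), the paper works in the decomposition $L_n=L_n'\rtimes\langle z\rangle$ from Theorem \ref{generalised theorem for commutator}, with $x_i=y_iy_{i+1}$ and $z=y_2$, and defines $\phi$ by $x_1\mapsto x_1^{-1}$, fixing $x_i$ ($i\geq 2$) and $z$; translated to Coxeter generators this is $y_1\mapsto y_2y_1y_2$ fixing $y_i$ for $i\geq 2$, which equals your $y_1\mapsto y_1y_2y_1$ by the braid relation. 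The divergence is in how non-invariance is certified: the paper exhibits the single element $(x_1x_2)^2=(y_1y_3)^2\in PL_n$ and computes that its image maps under $\pi$ to a $3$-cycle --- a two-line check --- whereas you rule out the existence of any $\psi\in\Aut(S_n)$ with $\psi\pi=\pi\phi$ by locating $\psi(\tau_1)$ in the centraliser of $S_{\{3,\dots,n\}}$. Your rigidity argument is sound (the centraliser is $\{1,(1\,2)\}$ for $n\geq 5$, and your extra care at $n=4$ via injectivity and the braid relation with $\tau_2$ is exactly what is needed; the $S_6$ issue is indeed moot since $\psi$ fixes the transposition $\tau_2$), and it buys a uniform, element-free structural reason for the failure, plus a cleaner well-definedness check of $\phi$ directly on the Coxeter presentation (the paper's ``can be checked easily'' on the free-product presentation tacitly requires verifying compatibility with the $z$-action). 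But it is considerably longer than necessary: with your $\phi$ in hand, the one-line computation $\pi\phi\bigl((y_1y_3)^2\bigr)=\bigl((1\,3)(3\,4)\bigr)^2\neq 1$ closes the argument immediately, which is exactly the paper's shortcut.
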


\begin{proof}
 Note that $PL_2 = 1 =PL_3$. By Thoerem \ref{generalised theorem for commutator}, $L_n'$ is an index two subgroup of $L_n$. Therefore, if $PL_n \nleq L_n'$, then $L_n' PL_n = L_n$. Then $\pi(L_n)= \pi(L_n' PL_n)=\pi(L_n')=A_n$, which is a contradiction to the surjectivity of $\pi$. Hence, $PL_n$ is a subgroup of $L_n'$, which proves assertion (1).
\par

Set $x_i = y_i y_{i+1}$ for  $1\leq i \leq n-2$ and $z=y_2$. Then, using  Theorem \ref{generalised theorem for commutator},  we can write
$$L_n = L_n' \rtimes \langle z \rangle,$$
where $L_n'=\langle x_1, x_2, \ldots, x_{n-2} ~|~x_i^3=1~\textrm{for}~1\leq i \leq n-2  \rangle$. Note that the action of $\langle z \rangle$ on $L_n'$ is given by
$$ x_1^z = x_1^{-1},~ x_2^z = x_2^{-1}$$
and
$$x_j^z = x_2 x_3 \cdots x_{j-1}x_j^{-1}x_{j-1}^{-1}\cdots x_2^{-1}~\textrm{for}~3\leq j \leq n-2.$$

Consider the map $\phi$ defined by $x_1 \mapsto x_1^{-1}$, $x_i\mapsto x_i$ for $2\leq i \leq n-2$ and $z\mapsto z$. With the preceding presentation of $L_n$, it can be checked easily that $\phi$ extends to an automorphism of $L_n$. Now, the element $(x_1x_2)^2 \in PL_n$, but $\phi (x_1x_2)^2 = (x_1^{-1}x_2)^2 \notin  PL_n$, since $\pi (x_1^{-1}x_2)^2 = \big ( (2 3)(1 2)(2 3)(3 4) \big )^2 = (1 3 4)$. Thus, $PL_n$ is not a characteristic subgroup of $L_n$ for $n \ge 4$, which is assertion (2).
\end{proof}

It is known that $PT_n$ is  free if and only if $3 \le n \le 5$ \cite{BarVesSin, GonGutiRoq}. On the other hand, the pure braid group $P_n$ is free if and only if $n=2$. We need the following result of Kurosch \cite{Hall, Kurosch} for the description of $PL_n$.

\begin{lemma}\label{subgroup-of-freeproducts}
Let $G = *_\nu A_\nu$ be a free product of groups and $H$ a subgroup of $G$. Then $H$ itself is a free product, more precisely, $$H = F * \big(*_j ( x_j^{-1}U_jx_j) \big),$$
where $F$ is a free group and each $x_j^{-1}U_jx_j$ is some conjugate of a subgroup $U_j$ of one of the free factors $A_\nu$ of $G$.
\end{lemma}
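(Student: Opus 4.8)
The statement is the classical Kurosch subgroup theorem, and the plan is to derive it from the Bass--Serre theory of groups acting on trees, which gives the cleanest route and handles an arbitrary (possibly infinite) index set $\nu$ uniformly. First I would realise the free product $G = *_\nu A_\nu$ as the fundamental group of a graph of groups: take the star-shaped graph with a central vertex carrying the trivial group, one pendant vertex for each $\nu$ carrying the group $A_\nu$, and every edge carrying the trivial group. The associated Bass--Serre tree $T$ then carries an action of $G$ in which every edge stabiliser is trivial and every vertex stabiliser is either trivial (for the orbit of the central vertex) or a conjugate $gA_\nu g^{-1}$ of one of the free factors.

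Next I would restrict the $G$-action on $T$ to the subgroup $H$. Since $H \le G$, it acts on the very same tree $T$. The edge stabilisers for the $H$-action are the intersections $H \cap gG_e g^{-1}$, which are trivial because each $G$-edge-stabiliser $G_e$ is already trivial, and the vertex stabilisers are the intersections $H \cap gA_\nu g^{-1}$. Thus $H$ acts on a tree with trivial edge stabilisers.

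The core step is then to apply the fundamental theorem of Bass--Serre theory to the $H$-action: $H$ is isomorphic to the fundamental group of the quotient graph of groups $H\backslash\backslash T$. Because all edge groups are trivial, no amalgamation occurs, and this fundamental group splits as the free product of the vertex groups of the quotient graph together with the free group $F$ on a set of edges complementary to a chosen spanning tree (equivalently, $\pi_1$ of the underlying quotient graph). Each nontrivial vertex group is of the form $H \cap gA_\nu g^{-1}$; writing $x_j = g^{-1}$ and $U_j = x_j H x_j^{-1} \cap A_\nu \le A_\nu$, this is exactly $x_j^{-1} U_j x_j$. Collecting these contributions yields
$$H = F * \big(*_j (x_j^{-1} U_j x_j)\big),$$
as claimed.

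The main obstacle I anticipate is bookkeeping rather than conceptual: one must verify carefully that the edge stabilisers really are trivial, so that the decomposition is a genuine free product and not an amalgam or HNN extension, and one must account correctly for the free factor $F$ arising from the loops of the quotient graph. If one prefers to avoid the tree machinery, the identical result follows from Kurosch's original combinatorial argument via the normal-form theorem for free products together with a Reidemeister--Schreier style choice of coset representatives, at the cost of a more intricate induction on syllable length; since the statement is invoked here only structurally, the topological route is the most economical.
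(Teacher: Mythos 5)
Your proof is correct, but it takes a genuinely different route from the paper, which in fact offers no proof of its own: the lemma is quoted as the classical Kurosch subgroup theorem with citations to \cite{Hall, Kurosch}, whose arguments are combinatorial --- normal forms in free products combined with a careful Reidemeister--Schreier style choice of (double) coset representatives, precisely the alternative you sketch in your final paragraph. Your Bass--Serre derivation is the standard modern proof (as in Serre's \emph{Trees}): the star-shaped graph of groups with trivial central vertex and edge groups does present $G = *_\nu A_\nu$; the Bass--Serre tree has trivial edge stabilisers, which remain trivial on restriction to $H$, so the structure theorem expresses $H$ as the free product of the fundamental group of the underlying quotient graph (a free group $F$, possibly of infinite rank) with the nontrivial vertex stabilisers $H \cap g A_\nu g^{-1}$; and your bookkeeping $x_j = g^{-1}$, $U_j = x_j H x_j^{-1} \cap A_\nu$, giving $x_j^{-1} U_j x_j = H \cap g A_\nu g^{-1}$, checks out, provided you fix one representative $g$ per $H$-orbit of vertices to obtain the indexing set $j$ (a point worth making explicit, along with the observation that the quotient graph may be infinite, which the theory handles without change). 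The trade-off between the two approaches: the combinatorial proof of Kurosch and Hall is elementary and self-contained, matching the paper's citations, whereas the tree-theoretic argument is shorter once the machinery of graphs of groups is available, treats arbitrary index sets uniformly, and yields the conjugacy data (one conjugate per orbit) with no extra effort. For the application in the paper --- showing $PL_n \le L_n'$, a free product of cyclic groups of order three, is free because it contains no conjugate of a generator of a free factor --- either proof is equally serviceable.
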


\begin{proposition}\label{PKn-rank}
$PL_n$ is a non-abelian free group of finite rank if and only if $n \ge 4$.
\end{proposition}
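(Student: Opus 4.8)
The plan is to locate $PL_n$ inside the explicitly understood commutator subgroup $L_n'$, deduce freeness from Kurosch's theorem after checking that $PL_n$ is torsion-free, and then pin down the rank via the rational Euler characteristic so that the non-abelian range can be read off. By Proposition \ref{PLn-subgroup-commutator}(1) we have $PL_n \le L_n'$, and by Theorem \ref{generalised theorem for commutator} we may take $L_n' \cong (\mathbb{Z}/3\mathbb{Z})^{\ast(n-2)}$, the free product of $(n-2)$ copies of $\mathbb{Z}/3\mathbb{Z}$ generated by $x_i = y_i y_{i+1}$. Since $\pi\colon L_n \to S_n$ is surjective with kernel $PL_n$ and $[L_n:L_n']=2$ with $PL_n \le L_n'$, the index $[L_n':PL_n]$ equals $n!/2 = |A_n|$, so $\pi$ restricts to a surjection $L_n' \twoheadrightarrow A_n$.

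Next I would show $PL_n$ is torsion-free. In a free product every element of finite order is conjugate into one of the free factors, so each torsion element of $L_n'$ is a conjugate of some $x_i^{\pm 1}$. Under $\pi$ we have $\pi(x_i) = \tau_i\tau_{i+1} = (i,\,i+1,\,i+2)$, a nontrivial $3$-cycle, whence any conjugate of $x_i^{\pm1}$ maps to a nontrivial $3$-cycle in $A_n$ and so cannot lie in $\ker\pi = PL_n$. Thus $PL_n$ has no nontrivial elements of finite order. Applying Lemma \ref{subgroup-of-freeproducts}, we may write $PL_n = F \ast \big(\ast_j\, x_j^{-1}U_j x_j\big)$ with each $U_j \le \mathbb{Z}/3\mathbb{Z}$; torsion-freeness forces every $U_j = 1$, so $PL_n = F$ is free. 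Because $PL_n$ has finite index in the finitely generated group $L_n$, it is finitely generated, hence free of finite rank for every $n \ge 2$. This already shows that the ``free group of finite rank'' part of the statement holds for all $n \ge 2$, so the content of the proposition is exactly the non-abelianness.

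It remains to decide when the rank is at least two, which I would settle by computing it. Using additivity of the rational Euler characteristic over free products and its multiplicativity over finite-index subgroups, $\chi(L_n') = (n-2)\cdot\tfrac13 - (n-3) = \tfrac{7-2n}{3}$, and therefore $\chi(PL_n) = \tfrac{n!}{2}\cdot\tfrac{7-2n}{3} = \tfrac{n!(7-2n)}{6}$. Writing $PL_n \cong F_r$ so that $\chi(PL_n) = 1-r$ gives the closed form $r = 1 + \tfrac{n!(2n-7)}{6}$. For $n = 2,3$ this yields $r = 0$, recovering $PL_2 = PL_3 = 1$, which are abelian. For $n \ge 4$ we have $2n-7 \ge 1$ and $n! \ge 24$, hence $r \ge 5 \ge 2$, so $PL_n$ is non-abelian. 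Combining the two directions establishes the asserted equivalence.

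The conceptually sensitive step is the torsion-freeness argument, i.e. the claim that the torsion of $L_n'$ is exhausted by the conjugates of the $x_i^{\pm1}$ and that these all survive nontrivially under $\pi$; this is a standard fact about torsion in free products together with the $3$-cycle computation, so it is routine rather than a genuine obstacle. The remaining care is bookkeeping in the free-product Euler characteristic formula, with the principal tool, Kurosch's theorem, already supplied as Lemma \ref{subgroup-of-freeproducts}.
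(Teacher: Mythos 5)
Your proof is correct, and its structural core coincides with the paper's: the paper likewise places $PL_n$ inside $L_n' \cong (\mathbb{Z}/3\mathbb{Z})^{\ast(n-2)}$ via Proposition \ref{PLn-subgroup-commutator} and Theorem \ref{generalised theorem for commutator}, and applies Lemma \ref{subgroup-of-freeproducts} to conclude freeness, asserting --- without your explicit justification --- that $PL_n$ contains no conjugate of a generator of a free factor; your observation that $\pi(x_i)=\tau_i\tau_{i+1}$ is a $3$-cycle, so every conjugate of $x_i^{\pm1}$ has non-trivial image in $A_n$ and hence avoids $\ker\pi$, is precisely the missing verification. Where you genuinely diverge is in the two remaining steps. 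For finite rank the paper invokes Reidemeister--Schreier, while you use finite generation of finite-index subgroups --- the same fact in substance. For non-abelianness the paper simply exhibits the non-commuting elements $(y_1y_3)^2$ and $y_2(y_3y_1)^2y_2$ of $PL_n$, whereas you compute the rank outright: $\chi(L_n')=(7-2n)/3$, $[L_n':PL_n]=|A_n|=n!/2$ (using $\pi(L_n')=S_n'=A_n$, which you correctly justify), hence $r = 1 + n!(2n-7)/6$, which is $0$ for $n=2,3$ and at least $5$ for $n\geq 4$. This is strictly more than the proposition asserts; notably, the remark following the proposition in the paper states that determining the precise rank of $PL_n$ ``seems a computationally challenging problem,'' and your closed formula in effect answers it. As a sanity check for $n=4$: the kernel of $\mathbb{Z}/3\mathbb{Z} \ast \mathbb{Z}/3\mathbb{Z} \twoheadrightarrow A_4$ acts freely on the Bass--Serre tree, and the quotient graph has $12/3+12/3=8$ vertices and $12$ edges, giving rank $12-8+1=5$, in agreement with your formula. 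The one step you should make airtight is the appeal to multiplicativity of the rational Euler characteristic over finite-index subgroups: this requires $L_n'$ to be of finite homological type, which holds because it is virtually free (indeed $PL_n$ itself is a torsion-free finite-index subgroup, so there is no circularity once freeness is established first, exactly as you order the argument); alternatively one can bypass Wall's theory entirely by the Bass--Serre vertex-and-edge count just indicated, or by the Nielsen--Schreier index formula. In sum, your route trades the paper's two-line explicit commutator check for a quantitative computation that also settles the rank, at the modest cost of importing standard Euler-characteristic machinery not used in the paper.
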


\begin{proof}
A direct check shows that the elements $(y_1y_3)^2$ and $y_2(y_3y_1)^2y_2$ of $PL_n$ do not commute, and hence $PL_n$ is non-abelian for $n \ge4$. It follows from Theorem \ref{generalised theorem for commutator} that $L_n'$ is a free product of $(n-2)$ copies of the cyclic group of order three. Further, by Proposition \ref{PLn-subgroup-commutator}, $PL_n$ is a subgroup of  $L_n'$. Now, by Lemma \ref{subgroup-of-freeproducts}, $PL_n$ is a non-abelian free group since it contains no conjugates of generators of the cyclic groups of order three. Finally, it follows from Reidemeister-Schreier Theorem \cite[Theorem 2.6]{Magnus1966} that $PL_n$ is a non-abelian free group of finite rank.
\end{proof}

\begin{remark}
Determining the precise rank of $PL_n$ seems a computationally challenging problem. While the structure of the pure braid group $P_n$ is well-known \cite{KasselTuraev}, a precise description of the pure twin group $PT_n$ is also unknown for 
$n \ge 7$ \cite[Remark 3.5]{NaikNandaSingh2}.
\end{remark}

Since $PL_n$ is normal in $L_n$, there is a natural homomorphism 
$$\phi_n : L_n\cong \Inn(L_n)\rightarrow \Aut(PL_n),$$
obtained by restricting the inner automorphisms. By Proposition \ref{PKn-rank}, $PL_n$ is a non-abelian free group of finite rank for $n \ge 4$. This gives a representation of $L_n$ into the automorphism group of a free group.

\begin{theorem}
For $n\geq 4$, the representation
$$\phi_n : L_n \rightarrow \Aut(PL_n)$$
is faithful.
\end{theorem}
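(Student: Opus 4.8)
The plan is to identify the kernel of $\phi_n$ with the centraliser $\C_{L_n}(PL_n)$ and then prove that this centraliser is trivial. Since $\Z(L_n)=1$ by Lemma \ref{trivial center}, we have the identification $L_n\cong \Inn(L_n)$, and for $g\in L_n$ the restricted inner automorphism $\phi_n(g)=\widehat{g}\,|_{PL_n}$ is the identity precisely when $g$ commutes with every element of $PL_n$. Thus $\ker\phi_n=\C_{L_n}(PL_n)$, and faithfulness of $\phi_n$ is equivalent to the statement $\C_{L_n}(PL_n)=1$.

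First I would compute the intersection with the commutator subgroup, namely $\C_{L_n}(PL_n)\cap L_n'=\C_{L_n'}(PL_n)$. By Theorem \ref{generalised theorem for commutator}, $L_n'$ is a free product of $(n-2)$ copies of $\mathbb{Z}/3\mathbb{Z}$. In such a free product of finite cyclic groups the centraliser of any nontrivial element is cyclic: an element conjugate into a factor has centraliser a conjugate of that $\mathbb{Z}/3\mathbb{Z}$, while an element of infinite order acts hyperbolically on the Bass--Serre tree (whose edge stabilisers are trivial), so its centraliser embeds in the translation subgroup of its axis and is therefore infinite cyclic. Consequently no nontrivial $g\in L_n'$ can centralise all of $PL_n$, for by Proposition \ref{PKn-rank} the group $PL_n$ is non-abelian for $n\ge 4$ and hence cannot be contained in the cyclic group $\C_{L_n'}(g)$. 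This yields $\C_{L_n'}(PL_n)=1$. I expect this paragraph to be the main obstacle, and I would justify the cyclicity of centralisers either by the standard description of centralisers in free products coming from Bass--Serre theory or by a direct normal-form computation.

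Finally I would upgrade this to all of $L_n$ by a short normality argument. The set $C:=\C_{L_n}(PL_n)$ is the centraliser of the normal subgroup $PL_n$, hence is itself normal in $L_n$, and we have just shown $C\cap L_n'=1$. Since $L_n'$ has index two in $L_n$ by Theorem \ref{generalised theorem for commutator}, the quotient map restricts to an injection $C\hookrightarrow L_n/L_n'\cong \mathbb{Z}/2\mathbb{Z}$, so $|C|\le 2$. If $|C|=2$, then $C$ would be a normal subgroup of order two and therefore central, contradicting $\Z(L_n)=1$. Hence $C=1$, so $\ker\phi_n=1$ and $\phi_n$ is faithful. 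The reduction via normality together with the index-two central-subgroup argument is routine, so the only real work lies in the centraliser computation inside the free product $L_n'$.
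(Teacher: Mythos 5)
Your proof is correct, and its core coincides with the paper's: both identify $\ker\phi_n=\C_{L_n}(PL_n)$ and dispose of $\C_{L_n'}(PL_n)$ by the same mechanism, namely that in the free product $L_n'\cong(\mathbb{Z}/3\mathbb{Z})^{*(n-2)}$ the centraliser of any nontrivial element is cyclic (of order three or infinite), which is incompatible with containing the non-abelian free group $PL_n$ of Proposition \ref{PKn-rank}; the paper simply asserts this centraliser fact, whereas you sketch the Bass--Serre justification. Where you genuinely diverge is in the passage from $\C_{L_n'}(PL_n)=1$ to $\C_{L_n}(PL_n)=1$. The paper takes $1\neq z\in\C_{L_n}(PL_n)$, notes $z^2\in\C_{L_n'}(PL_n)=1$ so that $z$ is an involution, and then invokes Lemma \ref{centralisers of involutions} (that $\C_{L_n}(z)=\langle z\rangle$, which rests on Brink's centraliser theorem and a Bourbaki exercise on finite parabolics) to contradict $PL_n\leq\C_{L_n}(z)$. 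You instead observe that $C=\C_{L_n}(PL_n)$ is normal in $L_n$, injects into $L_n/L_n'\cong\mathbb{Z}/2\mathbb{Z}$ since $C\cap L_n'=1$, and that a normal subgroup of order two would be central, contradicting $\Z(L_n)=1$ from Lemma \ref{trivial center}. Your finish is slightly more self-contained, trading the involution-centraliser machinery for the elementary facts that centralisers of normal subgroups are normal and that normal subgroups of order two are central; the paper's finish is shorter given that Lemma \ref{centralisers of involutions} is already established there. Both arguments are complete, and all the structural inputs you use (the index-two decomposition $L_n=L_n'\rtimes\langle w_i\rangle$ and $PL_n\leq L_n'$) are available from Theorem \ref{generalised theorem for commutator} and Proposition \ref{PLn-subgroup-commutator}.
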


\begin{proof}
Note that $\ker(\phi_n) = \C_{L_n}(PL_n)$. We first claim that $\C_{L_n'}(PL_n)=1$. By Theorem \ref{generalised theorem for commutator}, we know that $L_n'$ is a free product of $n-2$ copies of the cyclic group of order three. Thus, it follows that for any $1 \neq x\in L_n'$, $\C_{L_n'}(x)$ is either the cyclic group of order three or the infinite cyclic group. If $1 \neq y\in \C_{L_n'}(PL_n)$, then $PL_n \leq \C_{L_n'}(y)$, which contradicts Proposition \ref{PKn-rank}. Now suppose that $1 \neq z\in \C_{L_n}(PL_n)$. Then $z^2 \in \C_{L_n'}(PL_n)=1$, and hence $z^2 = 1$. By Lemma \ref{centralisers of involutions}, we get $\C_{L_n}(z)= \langle z \rangle$. But $PL_n \le \C_{L_n}(z)$, which is a contradiction. Hence, $\C_{L_n}(PL_n)=1$, and the map $\phi_n$ is injective.
\end{proof}

\medskip

\subsection{Automorphism groups}
The structure of the automorphism group of braid groups is well-known \cite[Theorem 19]{Dyer-Gross}. More precisely, for $n \ge 2$,
 $$\Aut(B_n) \cong \Inn(B_n) \rtimes \Out(B_n),$$
where  $\Out(B_n)\cong  \mathbb{Z}/2\mathbb{Z}$ is generated by the automorphism given by $\sigma_i  \mapsto \sigma_i^{-1}$ for all $1 \le i \le n-1$. Similarly, a complete description of the automorphism group of twin groups is known \cite[Theorem 6.1]{NaikNandaSingh1}. For $n\geq 3$, $$\Aut(T_n)\cong \Inn(T_n)\rtimes \Out(T_n),$$
 where
\begin{equation*}
\Out(T_n) \cong 
\begin{cases}
\mathbb{Z}_2 & \text{if}\ n=3,\\
S_3 & \text{if}\ n=4,\\
D_8 & \text{if}\ n \geq 5.\\
\end{cases}
\end{equation*}
By solution of the isomorphism problem for $\mathcal{TW}$, the group $L_n$, $n\geq 2$, is isomorphic to the special case of the group $\mathbb{W}$ defined in \eqref{star-group}). More precisely, $L_n \cong \mathbb{W}_n$, where we set
\begin{equation}\label{Ln-iso-wn}
\mathbb{W}_n =\big\langle w_1, w_2, \ldots,  w_{n-1} \mid w_j^2 = 1 = (w_1w_i)^3, ~1\leq j\leq n-1,\; 2\leq i\leq n-1 \big\rangle.
\end{equation}
Note that $L_2 \cong \mathbb{Z}/2\mathbb{Z}$ and $L_3 \cong S_3$. Setting $\mathbb{S}=\{w_1, w_2, \ldots,  w_{n-1}\}$, by Theorem \ref{splitting of inn-c-sequence},  we have 
$$\Aut(L_n) \cong \big(\Inn(\mathbb{W}_n)\rtimes D \big) \rtimes\Aut \big(\Gamma_{(\mathbb{W}_n, \mathbb{S})}\big),$$
where 
$$D\cong (\U_3)^{n-2}/ \{\pm 1\}\cong (\mathbb{Z}/2\mathbb{Z})^{n-3}$$
and
$$\Aut \big(\Gamma_{(\mathbb{W}_n, \mathbb{S})}\big) \cong S_{n-2}.$$ 

Thus, we obtain

\begin{theorem}
Let $L_n$ be as defined in \eqref{definition-Ln}. Then
\begin{equation*}
\Aut(L_n) \cong 
\begin{cases}
L_3 & \text{if}\ n=3,\\
\big(L_n \rtimes (\mathbb{Z}/2\mathbb{Z})^{n-3} \big) \rtimes S_{n-2} & \text{if}\ n\geq 4,
\end{cases}
\end{equation*}
where 
\begin{equation*}
\Out(L_n) \cong 
\begin{cases}
1 & \text{if}\ n= 3\\
(\mathbb{Z}/2\mathbb{Z})^{n-3} \rtimes S_{n-2} & \text{if}\ n\geq 4.
\end{cases}
\end{equation*}
\end{theorem}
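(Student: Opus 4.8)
The plan is to read off this result as the specialization of the machinery of Sections \ref{automorphism section} and \ref{splitting-sequences} to the group $L_n$. By the solution of Problem \ref{prob1} for $\mathcal{TW}$ due to \cite{MuhlherrWeidmann2000, Brady2002}, $L_n$ is isomorphic to the star-shaped group $\mathbb{W}_n$ of \eqref{Ln-iso-wn}, so it suffices to determine $\Aut(\mathbb{W}_n)$. First I would dispose of the base case $n=3$ separately, since $\mathbb{W}_3$ has rank two and lies outside the presentation \eqref{star-group}: here $L_3\cong S_3$ is centerless and complete, whence $\Aut(L_3)\cong\Inn(L_3)\cong L_3$ and $\Out(L_3)=1$, which matches the stated formula once one notes that for $n=3$ both $S_{n-2}=S_1$ and $(\mathbb{Z}/2\mathbb{Z})^{n-3}$ are trivial.

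For $n\ge 4$ I would first record the numerical data of $\mathbb{W}_n$ within the framework of \eqref{star-group}. As every finite exponent equals $3$, the rank is $n-1$, there are $n-2$ pendant vertices, and the multiset of finite exponents is $\{3,\dots,3\}$; thus $l=1$, $m_1=3$, and $k_1=n-2$. Feeding this into the ingredients already established gives $\Inn(\mathbb{W}_n)\cong\mathbb{W}_n\cong L_n$ by Lemma \ref{trivial center}; the abelian group $C$ of \eqref{abelian-aut} satisfies $C\cong(\U_3)^{n-2}\cong(\mathbb{Z}/2\mathbb{Z})^{n-2}$ via \eqref{C-product-U} together with the identification $\U_3\cong\{\pm 1\}$; and $\Aut\big(\Gamma_{(\mathbb{W}_n,\mathbb{S})}\big)\cong S_{k_1}=S_{n-2}$.

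Next I would invoke Theorem \ref{splitting of inn-c-sequence}. Its hypothesis demands a prime $p\equiv 3\pmod 4$ dividing some $m_i$; since $m_1=3\equiv 3\pmod 4$, this is satisfied, so the sequence $1\to\Inn(\mathbb{W}_n)\to\Inn(\mathbb{W}_n)C\to\Inn(\mathbb{W}_n)C/\Inn(\mathbb{W}_n)\to 1$ splits with a complement $D$, and $\Aut(\mathbb{W}_n)\cong\big(\Inn(\mathbb{W}_n)\rtimes D\big)\rtimes\Aut\big(\Gamma_{(\mathbb{W}_n,\mathbb{S})}\big)$. Because $\Inn(\mathbb{W}_n)\cap C=\langle\widehat{w_1}\rangle\cong\{\pm 1\}$ by Lemma \ref{C-intersection-w-hat}, the complement has order $2^{n-2}/2=2^{n-3}$, so $D\cong(\mathbb{Z}/2\mathbb{Z})^{n-3}$, which yields the claimed form $\big(L_n\rtimes(\mathbb{Z}/2\mathbb{Z})^{n-3}\big)\rtimes S_{n-2}$ for $\Aut(L_n)$.

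Finally, for $\Out(L_n)$ I would compute the quotient directly rather than appeal to Theorem \ref{splitting-out-sequence}. From Theorem \ref{splitting-by-graph-auto}, $N:=\Inn(\mathbb{W}_n)C$ is normal in $\Aut(\mathbb{W}_n)$ with complement $H:=\Aut\big(\Gamma_{(\mathbb{W}_n,\mathbb{S})}\big)$ satisfying $N\cap H=1$. Passing to the quotient by the normal subgroup $\Inn(\mathbb{W}_n)$, the second isomorphism theorem gives $N/\Inn(\mathbb{W}_n)\cong C/\langle\widehat{w_1}\rangle\cong(\mathbb{Z}/2\mathbb{Z})^{n-3}$, while $H$ injects since $H\cap\Inn(\mathbb{W}_n)\le H\cap N=1$; these images intersect trivially and generate the quotient, so $\Out(L_n)\cong(\mathbb{Z}/2\mathbb{Z})^{n-3}\rtimes S_{n-2}$. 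The principal conceptual subtlety here, and the point I would flag as the main obstacle, is precisely that Theorem \ref{splitting-out-sequence} is \emph{not} available: the exponent $3$ has multiplicity $n-2>1$ for $n\ge 4$, so its multiplicity-one hypothesis fails, and the outer automorphism group must therefore be extracted as the canonical quotient $C/\langle\widehat{w_1}\rangle$ extended by the graph automorphisms, not as a literal complement to $\Inn(\mathbb{W}_n)$ furnished by that theorem. Beyond this, the only hazard is the routine index bookkeeping caused by $L_n$ having rank $n-1$ with $n-2$ leaves.
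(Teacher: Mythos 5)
Your proposal is correct and follows essentially the same route as the paper: identify $L_n$ with the star-shaped group $\mathbb{W}_n$ of \eqref{Ln-iso-wn}, specialize Theorem \ref{splitting of inn-c-sequence} (whose hypothesis holds since $m_1=3\equiv 3 \bmod 4$, with no multiplicity condition) to get $\Aut(L_n)\cong\big(\Inn(\mathbb{W}_n)\rtimes D\big)\rtimes\Aut\big(\Gamma_{(\mathbb{W}_n,\mathbb{S})}\big)$ with $D\cong(\U_3)^{n-2}/\{\pm 1\}\cong(\mathbb{Z}/2\mathbb{Z})^{n-3}$ and $\Aut\big(\Gamma_{(\mathbb{W}_n,\mathbb{S})}\big)\cong S_{n-2}$, and read off $\Out(L_n)$ by passing to the quotient by $\Inn(\mathbb{W}_n)$. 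Your explicit handling of $n=3$ and your observation that Theorem \ref{splitting-out-sequence} is unavailable here (the exponent $3$ has multiplicity $n-2>1$) are accurate and consistent with the paper, which likewise invokes only Theorem \ref{splitting of inn-c-sequence} and obtains $\Out(L_n)$ as the induced quotient $\big(C/\langle\widehat{w_1}\rangle\big)\rtimes S_{n-2}$.
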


An automorphism of a group is said to be \textit{normal} if it maps every normal subgroup onto itself. It is known that every normal automorphism of $B_n$ and that of  $T_n$ is inner for $n \geq 2$ (see \cite{Neshchadim2016} and \cite[Proposition 6.14]{NaikNandaSingh2}).

\begin{theorem}
Every normal automorphism of $L_n$ is inner for $n \ge 2$.
\end{theorem}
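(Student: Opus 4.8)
The plan is to show that the group $N$ of normal automorphisms of $L_n$ coincides with $\Inn(L_n)$. Since $\Inn(L_n) \subseteq N$ always holds (inner automorphisms preserve every normal subgroup), it suffices to prove the reverse inclusion. The natural approach is to exploit the explicit structure of $\Aut(L_n)$ already computed: via the isomorphism $L_n \cong \mathbb{W}_n$ of \eqref{Ln-iso-wn} we have
$$\Aut(L_n) \cong \big(\Inn(\mathbb{W}_n) \rtimes D\big) \rtimes \Aut\big(\Gamma_{(\mathbb{W}_n, \mathbb{S})}\big),$$
so any normal automorphism $\phi$ can be written as $\phi = \widehat{w}\,\theta\,\widetilde{\alpha}$ with $\widehat{w} \in \Inn(\mathbb{W}_n)$, $\theta \in D$, and $\widetilde{\alpha} \in \Aut\big(\Gamma_{(\mathbb{W}_n, \mathbb{S})}\big) \cong S_{n-2}$. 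Modulo the inner factor, the task reduces to showing that the ``$D$-part'' $\theta$ and the ``graph-automorphism part'' $\widetilde{\alpha}$ must both be trivial when $\phi$ is normal. I would first dispose of the small cases $n = 2, 3$ directly, where $\Out(L_n)$ is trivial (indeed $L_2 \cong \mathbb{Z}/2\mathbb{Z}$ and $\Out(L_3) = 1$), so every automorphism is inner for free.

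\textbf{Forcing the graph-automorphism part to be trivial.}
For $n \ge 4$ the key is to produce a normal subgroup that a nontrivial $\widetilde{\alpha}$ or $\theta$ fails to preserve. Recall $PL_n = \ker(\pi)$ is a characteristic candidate — but Proposition \ref{PLn-subgroup-commutator} tells us $PL_n$ is \emph{not} characteristic for $n \ge 4$, so I cannot use $PL_n$ itself. Instead, I would look for normal subgroups tied tightly to the parabolic structure. The cleanest route is to use the maximal finite parabolic subgroups $\mathbb{W}_i = \langle w_1, w_i\rangle$ and the normal closures of well-chosen elements. Concretely, a normal automorphism must map each normal subgroup $\langle\!\langle w_1 w_i\rangle\!\rangle$ (the normal closure in $\mathbb{W}_n$) onto itself; tracking how $\widetilde{\alpha}$ permutes the generators $w_2, \dots, w_{n-1}$ against the constraint that these normal closures are preserved should force $\alpha = \id$. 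More efficiently, since a normal automorphism in particular preserves conjugacy-invariant data, I would consider the abelianization $L_n^{\mathrm{ab}} \cong (\mathbb{Z}/2\mathbb{Z})^{?}$ together with the induced action, or better, exploit that $\widetilde{\alpha}$ nontrivially permutes the conjugacy classes of the parabolic subgroups $\mathbb{W}_i$, which are pairwise non-conjugate by Lemma \ref{parabolic-subgroups-notconjugate}. The precise normal subgroup to test against is the main design choice.

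\textbf{Forcing the $D$-part to be trivial.}
Once $\widetilde{\alpha} = \id$, so $\phi = \widehat{w}\,\theta$ with $\theta \in D \le C$, I would show $\theta$ is trivial by testing $\phi$ on a suitable normal subgroup. Here the automorphism $\theta_i^k$ sends $w_i \mapsto w_1(w_1w_i)^k$, i.e. it ``twists'' within the parabolic $\mathbb{W}_i \cong D_6$ by replacing $w_1 w_i$ with its $k$-th power. For $m_i = 3$ the only nontrivial such twist inverts $w_1 w_i$, which is exactly $\widehat{w_1}$ restricted to $\mathbb{W}_i$; thus the whole point is that $D$ is the complement to $\langle \widehat{w_1}\rangle$ in $C$, and I must rule out any genuinely outer twist. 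Testing on the normal closure of a commutator element such as $(w_1 w_2)(w_1 w_3)^{-1}$ or, following the computation in Proposition \ref{PLn-subgroup-commutator}, on elements like $(x_1 x_2)^2$ whose image under $\pi$ detects the twist, should reveal that a nontrivial $\theta$ moves some element out of a $\pi$-preimage-defined normal subgroup.

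\textbf{Main obstacle.}
The hard part will be isolating the correct normal subgroup of $L_n$ that simultaneously pins down both $\widetilde{\alpha}$ and $\theta$, given that the most obvious candidate $PL_n$ is explicitly \emph{not} characteristic for $n \ge 4$. The delicate point is that normality is a much weaker hypothesis than characteristicity restricted to a single subgroup, so I expect the argument to hinge on checking the action of $\phi$ against the \emph{family} of all normal closures $\langle\!\langle w_i \rangle\!\rangle$ and their intersections, and verifying that preserving all of them simultaneously leaves only inner automorphisms. Organising this uniformly in $n$, rather than case-by-case, is where the real work lies.
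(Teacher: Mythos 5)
Your proposal is correct and takes essentially the same route as the paper: the paper likewise writes a normal automorphism, modulo $\Inn(\mathbb{W}_n)$, as $\theta\widetilde{\alpha}$ with $\theta\in C$ and $\widetilde{\alpha}\in\Aut\big(\Gamma_{(\mathbb{W}_n,\mathbb{S})}\big)$ (Theorem \ref{splitting-by-graph-auto}), kills a nontrivial $\widetilde{\alpha}$ by testing against the normal closure of $w_1w_i$ (computing $\phi(w_1w_i)=(w_1w_j)^k\notin\langle\!\langle w_1w_i\rangle\!\rangle$), and kills a nontrivial twist by testing against the normal closure of $w_2w_3$, which is conjugate to your proposed element $(w_1w_2)(w_1w_3)^{-1}$. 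The only cosmetic difference is that the paper works with $\Inn(\mathbb{W}_n)C$ rather than the split form with $D$, observing directly that $\prod_i\theta_i^{k_i}$ with all $k_i$ equal is the identity or $\widehat{w_1}$, so the steps you hedge as ``should force'' are exactly the short membership checks the paper carries out.
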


\begin{proof}
The assertion is obvious for $n=2, 3$. As in \eqref{Ln-iso-wn}, $L_n \cong \mathbb{W}_n$. By Theorem \ref{splitting-by-graph-auto}, we have 
$$\Aut(L_n) \cong \big ( \Inn(\mathbb{W}_n) C \big ) \rtimes \Aut \big(\Gamma_{(\mathbb{W}_n, \mathbb{S})}\big).$$
Since all inner automorphisms are normal, it suffices to check normality for automorphisms of the form $\phi = \theta \widetilde{\alpha}$, where $\theta \in C$ and $\widetilde{\alpha} \in \Aut \big(\Gamma_{(\mathbb{W}_n, \mathbb{S})}\big)$. If $\widetilde{\alpha} \neq 1$, then the inducing permutation $\alpha$ of $\{2, 3, \ldots, n-1\}$ is non-trivial, say, $\alpha (i) = j$ for some $i \neq j$. Taking $N$ to be the normal closure of the element $w_1w_i$ in $\mathbb{W}_n$, we see that $w_1w_i \in N$, but $\phi(w_1w_i) =\theta \widetilde{\alpha}(w_1w_i) = w_1\theta \widetilde{\alpha}(w_i)=w_1\theta(w_j)=(w_1 w_j)^k \not\in N$, where $k=1, 2$.  Thus, if $\widetilde{\alpha}\neq1$, then $\phi$ cannot be a normal automorphism.
\par
Now suppose that  $\phi = \prod_{i=2}^{n-1} \theta_i^{k_i} \in C$, where each $k_i \in \{1, 2\}$. If each $k_i=1$, then $\phi$ is the identity automorphism. Similarly, if each $k_i=2$, then $\phi$ is precisely the inner automorphism induced by $w_1$. Suppose that at least two $k_i$'s are distinct, say,  $k_2=1$ and $k_3=2$. Taking $H$ to be the normal closure of $w_2w_3$ in $\mathbb{W}_n$, we see that $w_2w_3 \in H$, but $\phi(w_2w_3) \not\in H$. Thus, every normal automorphism of $\mathbb{W}_n$ must be inner.
\end{proof}
\medskip

\subsection{$R_{\infty}$ and (co)-Hopfianity}
Next, we discuss  $R_{\infty}$-property of groups in the family $\mathcal{TW}$, in particular, groups $L_n$. A group $G$ is said to have $R_{\infty}$-property if it has infinitely many $\phi$-twisted conjugacy classes for each automorphism $\phi$ of $G$,  where two elements $x, y \in G$ lie in the same $\phi$-twisted conjugacy class if there exists $g \in G$ such that $x = gy\phi(g)^{-1}$. The study of $R_{\infty}$-property of groups has attracted a lot of attention in recent years, see, for example,  \cite{Daciberg-Sankaran2019, GoncalvesSankaranWong, Nasybullov-comm2019, Nasybullov-group-2019}. It is known that braid groups $B_n$ have $R_{\infty}$-property for all $n \ge 3$ \cite{Fel'shtyn2010}. Further, it was proved recently in \cite{NaikNandaSingh2} that twin groups $T_n$ also have $R_{\infty}$-property for all $n \ge 3$.

\begin{theorem}\label{thm-R-infinity}
Let $W\in \mathcal{TW}$ be a Coxeter group admitting an odd connected Coxeter system of rank $n \ge 3$. Then $W$ satisfy the  $R_{\infty}$-property.  In particular, $L_n$ has the $R_{\infty}$-property for $n \ge 3$.
\end{theorem}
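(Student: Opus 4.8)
The plan is to reduce the statement to the star model $\mathbb{W}$ and then read off the $R_\infty$-property from a geometric feature of $\mathbb{W}$. By the solution of Problem \ref{prob1} for the family $\mathcal{TW}$, any $W$ as in the hypothesis is isomorphic to the group $\mathbb{W}$ of \eqref{star-group} of the same rank $n\ge 3$ and the same multiset of odd exponents $m_i\ge 3$; hence it suffices to prove that $\mathbb{W}$ has the $R_\infty$-property, and the claim for $L_n$ is then the special case $\mathbb{W}=\mathbb{W}_n$ of \eqref{Ln-iso-wn}. It is convenient to note at the outset that the Reidemeister number $R(\phi)$, the number of $\phi$-twisted conjugacy classes, depends only on the class of $\phi$ in $\Out(\mathbb{W})$, since $R(\widehat{x}\phi)=R(\phi)$ for every inner automorphism $\widehat{x}$; thus I only need a lower bound on $R(\phi)$ that is valid for an arbitrary $\phi\in\Aut(\mathbb{W})$.

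The main idea is that $\mathbb{W}$ is a \emph{non-elementary} group with \emph{infinitely many ends}. Indeed, as observed after \eqref{star-group}, $\mathbb{W}$ is the amalgamated free product of the finite dihedral groups $\mathbb{W}_2,\dots,\mathbb{W}_n$ over the finite subgroup $\langle w_1\rangle$, so it splits nontrivially over a finite subgroup and is virtually free. It is non-elementary because it contains a nonabelian free subgroup: by Theorem \ref{generalised theorem for commutator} the characteristic subgroup $\mathbb{W}'$ is the free product $(\mathbb{Z}/m_1\mathbb{Z})\ast\cdots\ast(\mathbb{Z}/m_{n-1}\mathbb{Z})$ of $n-1\ge 2$ cyclic groups each of order at least three, and any such free product contains $F_2$. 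In particular $\mathbb{W}$ is not virtually cyclic, so it has infinitely many ends. I would then invoke the known fact that a finitely generated group with infinitely many ends --- equivalently here, a non-elementary virtually free, hence non-elementary hyperbolic, group --- has the $R_\infty$-property. This yields $R(\phi)=\infty$ for every $\phi\in\Aut(\mathbb{W})$, as required.

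If one prefers to stay within the free-product structure already established rather than quote a hyperbolicity result for $\mathbb{W}$ itself, there is a parallel route through the commutator subgroup. One first applies the same end-theoretic criterion to $\mathbb{W}'$, which is the nontrivial free product above and is neither cyclic nor $\mathbb{Z}/2\mathbb{Z}\ast\mathbb{Z}/2\mathbb{Z}$, to conclude $R(\psi)=\infty$ for all $\psi\in\Aut(\mathbb{W}')$. One then transfers infiniteness upward along the characteristic subgroup $\mathbb{W}'$, which has index two by Theorem \ref{generalised theorem for commutator}. The transfer step is a finite-index lifting lemma: if $N\trianglelefteq G$ is $\phi$-invariant of finite index and $R(\phi|_N)=\infty$, then $R(\phi)=\infty$. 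Its proof is a short counting argument: each $\phi$-twisted class of $G$ that meets $N$ is a union of at most $[G:N]$ of the $\phi|_N$-twisted classes of $N$, so that $R(\phi)\ge R(\phi|_N)/[G:N]$. (Concretely, if $gy\phi(g)^{-1}\in N$ with $y\in N$, writing $g=nh_i$ over coset representatives $h_i$ of $N$ shows that $gy\phi(g)^{-1}$ is $\phi|_N$-twisted conjugate in $N$ to $h_iy\phi(h_i)^{-1}$.) With $G=\mathbb{W}$ and $N=\mathbb{W}'$ this gives $R(\phi)\ge R(\phi|_{\mathbb{W}'})/2=\infty$ for every $\phi$.

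I expect the essential content --- and the only genuinely nontrivial step --- to be the $R_\infty$-property of the underlying free (product) structure; once that geometric input is in hand, the reduction to $\mathbb{W}$ and, in the second route, the index-two transfer are routine. The transfer deserves a word of caution, since the $R_\infty$-property is in general neither inherited by nor lifted from finite-index subgroups; what makes the lifting lemma applicable here is precisely that infiniteness need only survive division by the fixed finite index $[\mathbb{W}:\mathbb{W}']=2$.
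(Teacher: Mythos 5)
Your proposal is correct, and your second route is in substance the paper's own proof. The paper observes (Theorem \ref{generalised theorem for commutator}) that $W'$ is a characteristic index-two subgroup isomorphic to a free product of non-trivial finite cyclic groups, quotes \cite[Lemma 2]{GoncalvesSankaranWong} for the $R_\infty$-property of such a free product, and then lifts along the characteristic finite-index inclusion by citing \cite[Lemma 2.2(ii)]{MubeenaSankaran} --- which is exactly the counting lemma $R(\phi)\geq R(\phi|_N)/[G:N]$ that you prove by hand (your coset computation $gy\phi(g)^{-1}=n\bigl(h_iy\phi(h_i)^{-1}\bigr)\phi(n)^{-1}$ is right, and your caution that $\phi$-invariance of $N$, supplied here by $W'$ being characteristic, is what legitimises the transfer is well placed). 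Two differences are worth noting. First, your initial reduction to the star model $\mathbb{W}$ is harmless but unnecessary: Theorem \ref{generalised theorem for commutator} applies verbatim to every $W\in\mathcal{TW}$, so the paper never invokes the isomorphism classification in this proof. Second, your input for the $R_\infty$-property at the bottom differs: where the paper uses the elementary free-product lemma of Gon\c{c}alves--Sankaran--Wong, you use ends/hyperbolicity, and your first route applies this directly to $\mathbb{W}$ itself (non-elementary virtually free, hence non-elementary hyperbolic, via the amalgam of the dihedral groups $\mathbb{W}_i$ over $\langle w_1\rangle$ and the copy of $F_2$ inside $\mathbb{W}'$). That route is genuinely different and shorter, bypassing the commutator subgroup entirely, but it rests on Fel'shtyn's theorem that every automorphism of a non-elementary Gromov hyperbolic group has infinite Reidemeister number --- a correct but heavier result that is not among the paper's references, so you would need to cite it explicitly; the blanket claim that finitely generated groups with infinitely many ends have $R_\infty$ is stronger than you need and harder to source, and your hedge reducing it to the hyperbolic case is the right move. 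In short: the paper's route buys elementarity of inputs, yours buys a one-step geometric argument.
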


\begin{proof}
By Lemma \ref{generalised theorem for commutator}, we have the short exact sequence $1 \to W' \to W \to \mathbb{Z}/2\mathbb{Z} \to 1$. By Theorem \ref{generalised theorem for commutator}, $W'$ is a free product of finite cyclic groups. By \cite[Lemma 2]{GoncalvesSankaranWong}, a free product of non-trivial finite groups, in particular $W'$, has $R_{\infty}$ property. Finally, since $W'$ is characteristic in $W$, by \cite[Lemma 2.2(ii)]{MubeenaSankaran}, it follows that $R_{\infty}$ property.
\end{proof}
\medskip

Recall that a group is co-Hopfian (respectively Hopfian) if every injective (respectively surjective) endomorphism is an automorphism. These properties are known to be closely related to $R_{\infty}$-property. See, for example, \cite[Lemma 2.3]{MubeenaSankaran}. Braid groups $B_n$ are known to be Hopfian being residually finite \cite[Chapter I, Corollary 1.22]{KasselTuraev} and are not co-Hopfian for $n \ge 2$ \cite{BellMargalit}. It is well-known that Coxeter groups, in particular $T_n$ and $L_n$, are Hopfian \cite[Theorem C, p.55]{Brown1989}. The twin groups $T_n$ are co-Hopfian only for $n = 2$ \cite[Theorem 4.1]{NaikNandaSingh2}. We conclude with the following result on co-Hopfianity of the group $\mathbb{W}$.

\begin{theorem}\label{thm-cohopfian}
Let $W\in \mathcal{TW}$ be a Coxeter group admitting an odd connected Coxeter system of rank $n \ge 2$. Then
$W$ is co-Hopfian. In particular, $L_n$ is co-Hopfian for $n \ge 2$.
\end{theorem}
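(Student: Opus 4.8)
The plan is to show that every injective endomorphism $f \colon W \to W$ is surjective, hence an automorphism. First I would reduce to understanding how $f$ acts on a Coxeter generating set. Since $W \in \mathcal{TW}$ has a unique conjugacy class of involutions (Lemma \ref{involutions are conjugate in TW_n}) and the generators $w_i$ are involutions, each $f(w_i)$ is an involution, and by Lemma \ref{involutions are conjugate in TW_n} every involution is conjugate to a generator and hence has odd length. The crucial structural input is Theorem \ref{generalised theorem for commutator}, which identifies $W'$ as a free product $(\mathbb{Z}/m_1\mathbb{Z}) \ast \cdots \ast (\mathbb{Z}/m_{n-1}\mathbb{Z})$ of finite cyclic groups and exhibits $W = W' \rtimes \langle w_i \rangle$. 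The idea is to push the problem into the commutator subgroup: an injective endomorphism of $W$ restricts to an injective endomorphism of $W'$ (using that $W'$ is characteristic and that $f$ preserves even length), and then exploit the rigidity of finite free factors inside a free product of finite groups.

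Next I would analyze the induced injective endomorphism $\overline{f} \colon W' \to W'$. In a free product of finite groups $A_1 \ast \cdots \ast A_{n-1}$, the torsion elements are exactly the conjugates of elements of the free factors (by the Kurosh subgroup theorem, Lemma \ref{subgroup-of-freeproducts}, applied to the image, or by standard free-product normal-form arguments). Since $f$ preserves element orders, $\overline{f}$ must send each finite cyclic factor $\mathbb{Z}/m_k\mathbb{Z}$ to a conjugate of a factor of the same order. The key point is that an injective endomorphism of a free product of finitely many finite groups that maps each factor isomorphically onto a conjugate of a factor must be surjective: there is no room to embed the full collection of maximal finite subgroups into a proper free factor decomposition. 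Concretely, the finite factors $\langle w_1 w_i \rangle$ generate $W'$, and their images under $\overline{f}$ again generate a subgroup containing a full set of (conjugates of) maximal finite subgroups; a counting or rank argument via the Kurosh decomposition of $\overline{f}(W')$ forces $\overline{f}(W') = W'$.

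Once surjectivity of $\overline{f}$ on $W'$ is established, I would lift back to $W$. Since $W = W' \rtimes \langle w_1 \rangle$ and $f(w_1)$ is an involution of odd length not lying in $W'$, the image $f(W)$ contains $W' = \overline{f}(W')$ together with an odd-length involution, and therefore equals all of $W$ because $W/W' \cong \mathbb{Z}/2\mathbb{Z}$ and $f(w_1)$ represents the nontrivial coset. For the base case of rank two, $W \cong D_{2m}$ with $m$ odd is finite, and finite groups are trivially co-Hopfian, so that case is immediate; this lets me assume $n \ge 3$ for the free-product argument. The specialization to $L_n$ is then immediate since $L_n \cong \mathbb{W}_n \in \mathcal{TW}$.

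The main obstacle I anticipate is the surjectivity step for $\overline{f}$ on the free product $W'$: injective endomorphisms of free products of finite groups need not be surjective in general (for instance an infinite free factor could absorb the image), so the argument must genuinely use that \emph{all} free factors are finite and that $\overline{f}$ preserves orders and hence maps each finite factor to a conjugate of a factor of equal order. I would make this precise by invoking the Kurosh subgroup theorem (Lemma \ref{subgroup-of-freeproducts}) on $\overline{f}(W')$: since $\overline{f}$ is injective, $\overline{f}(W')$ is itself a free product of $n-1$ finite cyclic factors of orders $m_1, \dots, m_{n-1}$ (each a conjugate of a factor), plus possibly a free part; comparing this Kurosh decomposition against that of the ambient $W'$, and using that the finite factors of $W'$ are exactly its maximal finite subgroups up to conjugacy, forces the free part to be trivial and the factors to match, yielding $\overline{f}(W') = W'$.
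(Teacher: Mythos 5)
Your overall scaffolding (rank-two base case, restriction to the fully invariant subgroup $W'$, lifting surjectivity back through $W=W'\rtimes\langle w_1\rangle$) is fine, but the central step fails: it is \emph{not} true that an injective endomorphism of a free product of finite groups which carries each factor to a conjugate of a factor must be surjective. The classical counterexample is $\mathbb{Z}/2\mathbb{Z}\ast\mathbb{Z}/2\mathbb{Z}\cong D_\infty$, which contains proper copies of itself, and the same phenomenon occurs with odd factors: in $G=\langle a\rangle\ast\langle b\rangle$ with $a^3=b^3=1$, the assignment $a\mapsto a$, $b\mapsto g^{-1}bg$ for a suitable $g$ (so that $\langle a\rangle$ and $\langle b\rangle^{g}$ play ping-pong on the Bass--Serre tree) defines an injective endomorphism whose image $\langle a\rangle\ast\langle b\rangle^{g}$ is a proper subgroup of infinite index, yet is abstractly isomorphic to $G$. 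Its Kurosh decomposition has trivial free part and consists of conjugates of the full factors, with the same multiset of orders as the ambient decomposition — so the ``comparison of Kurosh decompositions'' you invoke cannot detect properness; Euler-characteristic or index counting only rules out proper \emph{finite-index} copies. There is a secondary inaccuracy as well: order preservation only gives that $\overline{f}$ maps $\mathbb{Z}/m_k\mathbb{Z}$ \emph{into} a conjugate of some factor $\mathbb{Z}/m_j\mathbb{Z}$ with $m_k\mid m_j$, not onto a conjugate factor of equal order (relevant when, say, both $3$ and $9$ occur as exponents), which further weakens the counting.

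The paper avoids this trap precisely by \emph{not} descending to $W'$: it works with the star-shaped model $\mathbb{W}$ and the maximal finite dihedral parabolics $\mathbb{W}_i=\langle w_1,w_i\rangle$, all of which share the cone generator $w_1$. After normalizing $\theta=\widehat{w}\phi$ so that $\theta(w_1)=w_1$, one gets $\theta(\mathbb{W}_i)=(\mathbb{W}_j)^{x}$ with $w_1\in(\mathbb{W}_j)^{x}$, and Lemma \ref{centralisers of involutions} ($\C_W(w_1)=\langle w_1\rangle$) then forces the conjugator $x$ to lie in $\mathbb{W}_j$, so $\theta(\mathbb{W}_i)=\mathbb{W}_j$ \emph{on the nose} rather than merely up to conjugacy. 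Injectivity makes $i\mapsto j$ a bijection, so $\theta(\mathbb{W})$ contains every $\mathbb{W}_i$ and hence every generator. This pinning of conjugators by the common involution $w_1$ is exactly the rigidity your reduction to $W'$ discards: inside $W'$ alone the conjugating elements are unconstrained, and that freedom is what produces the non-surjective injections above. To repair your argument you would need to reimport the ambient constraint $\theta(w_1)=w_1$ and track how it restricts the conjugators of the factors $\langle w_1w_i\rangle$ — at which point you have essentially reconstructed the paper's proof.
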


\begin{proof}
Without loss of generality we can take $W= \mathbb{W}$ as defined in \eqref{star-group}. Let $\phi$ be an injective homomorphism of $\mathbb{W}$. Then $\phi(w_1)$ is an involution. By Lemma \ref{involutions are conjugate in TW_n}, there exists a $w \in \mathbb{W}$ such that $\widehat{w}\phi(w_1) = w_1$. Setting $\theta= \widehat{w}\phi$, it now suffices to show that $\theta$ is surjective.
\par

Let $\mathbb{W}_i= \langle w_1, w_i \rangle$ for $2\leq i \leq n$. For each fixed $i$, since $\theta$ is injective, we have $\mathbb{W}_i \cong \theta(\mathbb{W}_i)$. Thus,  $\theta(\mathbb{W}_i)= (\mathbb{W}_j)^x$ for some $x\in W$ and $2\leq j\leq n$. As in the proof of Theorem \ref{splitting-by-graph-auto}, we can show that $x\in \mathbb{W}_j$, and hence $\theta(\mathbb{W}_i) = \mathbb{W}_j$. Since $\theta$ is injective, $\theta(\mathbb{W}_{i_1}) \neq \theta(\mathbb{W}_{i_2})$ for $i_1\neq i_2$, and hence
$$\big\{\mathbb{W}_i \mid 2\leq i \leq n \big\} = \big\{\theta(\mathbb{W}_i) \mid 2\leq i \leq n \big\}.$$
But, $\theta(\mathbb{W}) \geq \big\langle \theta(\mathbb{W}_i) \mid 2\leq i \leq n \big\rangle = \big\langle \mathbb{W}_i \mid 2\leq i \leq n \big\rangle = \mathbb{W}$, which proves that $\theta$ is surjective.
\end{proof}

Elements of braid groups $B_n$ are well-known to be represented by geometric braids in the 3-space and are deeply related to knot theory \cite{KasselTuraev}. Similarly, elements of twin groups $T_n$ can be represented by strands of line segments on the plane without triple intersections \cite{Khovanov} and relates to study of immersed circles on the two sphere. We do not know whether there is a similar topological interpretation of elements of $L_n$.

\begin{problem}
Does there exist a topological interpretation of the group $L_n$ analogous to that of $B_n$ and $T_n$?
\end{problem}
\medskip

\begin{ack}
The authors are grateful to Prof. Bernhard M\"{u}hlherr for useful comments on an earlier version of this paper, in particular, for pointing out his work \cite{MuhlherrWeidmann2000} which contains a solution of the isomorphism problem for the family $\mathcal{TW}$. Tushar Kanta Naik is supported by the Institute Post Doctoral Fellowship of IISER Mohali. Mahender Singh is supported by the Swarna Jayanti Fellowship grants DST/SJF/MSA-02/2018-19 and SB/SJF/2019-20/04, and the MATRICS Grant MTR/2017/000018. 
\end{ack}
\medskip

\end{document}